\newtheorem{thm}{Theorem}[section]
\newtheorem{theorem}[thm]{Theorem}
\newtheorem{proposition}[thm]{Proposition}
\newtheorem{lemma}[thm]{Lemma}
\newtheorem{claim}[thm]{Claim}
\newtheorem{corollary}[thm]{Corollary}
\newtheorem{conjecture}{Conjecture}
\newtheorem{prop}[thm]{Proposition}
\title{Rigidity of frameworks on expanding spheres}
\author{Anthony Nixon\\
Department of Mathematics and Statistics, Lancaster University\\ Lancaster, LA1 4YF, U.K.\\
 \\ Bernd Schulze\\ 
  Department of Mathematics and Statistics, Lancaster University\\ Lancaster, LA1 4YF, U.K.\\
\\  Shin-ichi Tanigawa\footnote{Supported by JSPS Grant-in-Aid for Scientific Research(A)(25240004)}\\ 
Research Institute for Mathematical Sciences, Kyoto University \\ Sakyo-ku, Kyoto 606-8502, Japan and \\
Centrum Wiskunde \& Informatica (CWI), Postbus 94079, 1090 GB\\
Amsterdam, The Netherlands\\
\\Walter Whiteley\footnote{Supported by a grant from NSERC (Canada)}\\
 Department of Mathematics and Statistics, York University\\ 4700 Keele Street, Toronto, ON M3J 1P3, Canada\\}
\begin{document}

\renewcommand{\thefootnote}{ }
\footnotetext{{\em Key words}. rigidity of graphs, bar-joint frameworks, expanding sphere, rigidity matroids, count matroids, symmetry}
\footnotetext{2010 {\em Mathematics Subject Classification}. Primary 52C25; Secondary  05B35, 05C10, 70B15.}

\maketitle

\begin{abstract}
A rigidity theory is developed for bar-joint frameworks in $\mathbb{R}^{d+1}$ whose vertices are
constrained to lie on concentric $d$-spheres with independently variable radii.    
In particular, combinatorial characterisations are established for the rigidity of generic frameworks for $d=1$ with an arbitrary number of independently variable radii, and for $d=2$ with at most two variable radii. This includes a characterisation of the rigidity or flexibility of uniformly expanding spherical frameworks in $\mathbb{R}^{3}$.
Due to the equivalence of the generic rigidity between Euclidean space and spherical space, these results interpolate between rigidity in 1D and 2D and to some extent between rigidity in 2D and 3D. Symmetry-adapted counts for the detection of symmetry-induced continuous flexibility in frameworks on spheres with variable radii are also provided.
\end{abstract}

\section{Introduction}

Let $G=(V,E)$ be a finite, simple graph. A \emph{framework} $(G,p)$ in $\mathbb{R}^d$ is a geometric realisation of $G$ via a map $p:V\rightarrow \mathbb{R}^d$. $(G,p)$ is \emph{rigid} if every edge-length preserving motion of $(G,p)$ arises as an isometry of $\mathbb{R}^d$. In general it is an NP-hard problem to determine the rigidity of a given framework \cite{Abb}. However for generic frameworks 
 the situation improves. When $d\leq 2$ there are simple combinatorial characterisations of generic rigidity (see \cite{Lam}, for example) that lead to efficient polynomial time algorithms. When $d\geq 3$ there are simple counterexamples to the natural analogue of these results \cite{Wlong} and it is a fundamental open problem in discrete geometry to determine if any combinatorial characterisations are possible.

Rigidity is also well studied for a variety of geometric situations. In particular, consider frameworks in $\mathbb{R}^{d+1}$ whose vertices are constrained to lie on a fixed $d$-dimensional sphere $\mathbb{S}^d$. It was explicit in a paper on coning by Whiteley \cite{Wcones} and extended by Saliola and Whiteley \cite{SaliolaWhiteley} (see also Izmestiev \cite{i}) that there is a one-to-one correspondence between the infinitesimal rigidity of $(G,p)$ in $\mathbb{R}^d$ and infinitesimal rigidity  of  $(G,p^*)$ on the sphere  $\mathbb{S}^d$ where $p^*$ projects from the center of the sphere to $p$. These connections among rigidity of frameworks in various metrics were initiated in work by Pogorolev \cite{pog}. 

In this paper we consider a variant of spherical rigidity, that is, the rigidity/flexibility of expanding spherical frameworks where the radius of the spherical framework may change continuously. 
Our initial motivation is to understand the spherical mechanisms of mathematical toys inspired by popular structures like the Hoberman sphere or Buckminster Fuller's `jitterbug' (such as Juno's spinners \cite{juno,kov}, for example). Rigidity and flexibility analyses of expanding spherical structures also have significant practical applications. For example, an important problem in biochemistry is to understand the swelling motions of virus capsids \cite{expandoh}.

In fact our results will be established in a more general setting. 
One can quickly see that generic rigidity on $\mathbb{S}^d$ is equivalent to generic rigidity on concentric $d$-spheres with fixed radii.
Thus we consider a further general model of spherical rigidity where the framework points are required to remain on concentric spheres throughout any motion, but  the spheres may vary their respective radii independently of each other in any continuous motion. To this end our first main result gives a combinatorial description of generic rigidity for such frameworks when $d=1$ (Theorem \ref{thm:1d}). Observe that the case when each vertex lies on its own variable sphere  corresponds to standard (Euclidean) rigidity in $\mathbb{R}^{d+1}$. Therefore our main theorem (Theorem \ref{thm:1d}) interpolates between rigidity on the line and the plane. 

Similarly, extending Theorem \ref{thm:1d} to the case $d=2$ would solve the famous\break 3-dimensional rigidity problem. 
Our second main result (Theorem \ref{thm:2d}) is a partial extension of Theorem \ref{thm:1d} which gives a combinatorial characterisation of rigidity for $d=2$ when there are at most two independent rates of variability among the radii. This includes the special case  of uniformly expanding spherical frameworks. Further extensions for $d=2$, however, seem difficult, and in Section~\ref{sec:higherdim} we illustrate counterexamples to the sufficiency of our counting condition when there are at least three independent rates.

In Section~\ref{sec:sym} we will also analyse symmetric spherical frameworks since  many of the man-made and biological structures discussed above possess non-trivial symmetries (typically rotational polyhedral symmetries).

We conclude the introduction with a short outline of what follows. In Section \ref{sec:variable} we formally describe our rigidity problem in terms of coloured graphs and derive the appropriate necessary (Maxwell-type) count conditions for rigidity. 
Section~\ref{sec:prelim} gives some preliminary results showing that our count functions are monotone and submodular, and hence induce matroids on the edge sets of the graphs. In Section~\ref{sec:prelim}  we also develop a geometric understanding of the well known 0- and 1-extension operations (also known as Henneberg operations) applied to our setting. Everything in Section \ref{sec:prelim} is independent of $d$. Section \ref{sec:comb} is devoted to the proof of our main results. Note that for $d=1$ and any fixed $k$ it is plausible that a Henneberg-Laman type recursive construction can be derived, and for $d=2$ with $k=1$ this is certainly true. However for $d=1$ with arbitrary $k$, and for $d=2$ with $k=2$ such methods seem to be difficult. Instead, we give a proof by induction on both the number of vertices and colours, making extensive use of the properties of the count functions and the geometric results from Section~\ref{sec:prelim}.
 In Section \ref{sec:sym} we then study the impact of symmetry on the rigidity properties of frameworks on variable spheres. In particular, we  provide symmetry-adapted combinatorial counts to detect symmetry-induced flexibility in such frameworks.
 Finally, in Section~\ref{sec:concl} we make some concluding remarks.

\section{Spheres with variable radii }
\label{sec:variable}
In this section we formally describe our rigidity problem on spherical frameworks with variable radii.
The combinatorics of such frameworks will be captured by vertex-coloured graphs,
where each colour will correspond to an independent rate of variability in the radius of the sphere.

Let $G=(V,E)$ be a simple graph with $V=\{1,\dots, n\}$.
By a {\em colouring} we mean a function $\chi:X\rightarrow {\cal C}$ from a subset $X$ of $V$ to a set ${\cal C}$ of colours, and a pair $(G,\chi)$ is said to be a coloured graph. 
A vertex not in the support of $\chi$ is said to be an {\em uncoloured vertex}.
We may always assume that $\chi$ is surjective.
Also it is convenient to introduce a special sign $\bullet$ to denote the uncoloured situation,
and regard $\chi$ as a function from $V$ to ${\cal C}\cup \{\bullet\}$
A colouring is called a $k$-colouring if the size of the image of $\chi$ is equal to $k$.
For a colour $c\in {\cal C}$, $\chi^{-1}(c)$ denotes the set of vertices having colour $c$. 

For $t\in \mathbb{R}_{>0}$ let $t\mathbb{S}^d$ be the sphere with radius $t$ centred at the origin. 
Suppose there is a $k$-coloured graph $(G=(V,E), \chi)$
and $r:{\cal C}\rightarrow \mathbb{R}_{>0}$.
We denote by $r\mathbb{S}^d$ the family of concentric spheres $\{r(c)\mathbb{S}^d \mid c\in {\cal C}\}$,
and by {\em a framework $(G,\chi, p)$ on $r\mathbb{S}^d$} we mean a tuple of $G, \chi$ and $p:V\rightarrow \mathbb{R}^{d+1}$ such that all joints coloured $c$ lie on $r(c)\mathbb{S}^d$
and all uncoloured joints lie on $\mathbb{S}^d$. 
We say that the framework $(G,\chi, p)$ is \emph{generic} on $r\mathbb{S}^d$ if the coordinates of $p$ and $r$ do not satisfy any nonzero polynomial except for those belonging to the ideal generated by  the defining polynomials of concentric spheres $r\mathbb{S}^d$. That is, the coordinates of $p$ are as algebraically independent as possible given that $(G,\chi, p)$ lies on $r\mathbb{S}^d$.

For such a framework in $\mathbb{R}^{d+1}$ we are interested in the rigidity with respect to the following motions: each joint is allowed to move under the condition that each edge-length is fixed,  
all joints in $\chi^{-1}(c)$ lie on a sphere (whose center is the origin) for each colour $c\in {\cal C}$,
and each uncoloured joint lies on the unit sphere. This can easily be formalised into a definition of continuous rigidity and using the standard Asimow-Roth technique \cite{A&R} can be shown to be, generically, equivalent to infinitesimal rigidity (defined below). Since all our proofs are in terms of infinitesimal rigidity we concentrate on the infinitesimal theory.

Namely, given a framework $(G,\chi,p)$ on $r\mathbb{S}^d$ with a $k$-colouring $\chi$,
we are interested in the dimension of the space of infinitesimal motions, 
where an \emph{infinitesimal motion} means a pair $(\dot{p},\dot{r})$ where $\dot{p}:V\rightarrow \mathbb{R}^{d+1}$ 
and $\dot{r}:{\cal C}\rightarrow \mathbb{R}$ satisfy the following system of equations:
\begin{align}
(p(u)-p(v))\cdot (\dot{p}(u)-\dot{p}(v))=0 & \qquad (uv\in E), \label{eq:const1}\\
p(v)\cdot \dot{p}(v)=\dot{r}(c)r(c) & \qquad (c\in {\cal C}\cup \{\bullet\}, v\in \chi^{-1}(c)), \label{eq:const2}
\end{align}
with $\dot{r}(\bullet)=0$.
By resetting $\dot{r}(c)$ to be $r(c)\dot{r}(c)$ we shall consider 
\begin{equation}
\label{eq:const3}
p(v)\cdot \dot{p}(v)=\dot{r}(c)
\end{equation}
instead of (\ref{eq:const2}).
The \emph{rigidity matrix} $R(G,\chi,p)$ is a matrix of size $(|E|+n) \times ((d+1)n+k)$ representing the system 
(\ref{eq:const1}), (\ref{eq:const3}).
More precisely, $R(G,\chi,p)$ consists of four blocks:
\begin{equation}\label{eq:rigmat}
R(G,\chi,p)=\begin{pmatrix}
R(G,p) & 0 \\
S(G,p) & V(G,\chi, p)
\end{pmatrix},
\end{equation}
where each block is defined as follows.
$R(G,p)$ is the usual rigidity matrix for a framework in $\mathbb{R}^{d+1}$,
where the rows are indexed by $E$ and sets of $d+1$ consecutive columns are indexed by $V$, and the entries in the row of edge $e = ij$ and in the $d+1$ columns of $i$ and $j$ contain the $d+1$ coordinates of $p(i)-p(j)$ and $p(j)-p(i)$, respectively, and the remaining entries are zeros.
$S(G,p)$  has size $|V|\times (d+1)|V|$ and is written as 
\begin{equation*}
S(G,p)=\begin{pmatrix}
p(1) &&&0 \\
 & p(2) && \\
 && \ddots & \\
0 &&& p(n) 
\end{pmatrix}
\end{equation*}
where the rows are indexed by $V$ and sets of $d+1$ consecutive columns are also indexed by $V$.
$V(G,\chi,p)$ has size $|V|\times |{\cal C}|$, where the rows are indexed by $V$ and the columns are indexed by ${\cal C}$, and the entries in the row of coloured vertex $v$ and in  the column of its colour $\chi(v)$ contain the coordinate $1$, and the remaining entries are zeros.

Suppose that $p(V)$ affinely spans $\mathbb{R}^{d+1}$.
Then, as we will see below, the rank of $R(G,\chi,p)$ is  bounded from above by $(d+1)n-{d+1\choose 2}+\min\{k,n-(d+1)\}$. We say that $(G,\chi,p)$ is \emph{infinitesimally rigid} if ${\rm rank}\ R(G,\chi,p)=(d+1)n-{d+1\choose 2}+\min\{k,n-(d+1)\}$. A framework $(G,\chi,p)$ is called \emph{independent}  if the rows of $R(G,\chi,p)$ are linearly independent, and $(G,\chi,p)$ is called \emph{isostatic} if it is independent and infinitesimally rigid.
 
Notice that, if all vertices are uncoloured, the concept coincides with the infinitesimal rigidity of spherical frameworks with fixed radius, where we are concerned with the rank of 
\begin{equation*}
R_{\mathbb{S}^d}(G,p):=\begin{pmatrix}
R(G,p)  \\
S(G,p) 
\end{pmatrix}.
\end{equation*}
It is known \cite{SaliolaWhiteley,i} that ${\rm dim}\ {\rm ker}\ R_{\mathbb{S}^d}(G,p)={\rm dim}\ {\rm ker}\ R(G,p')$ 
for a $d$-dimensional framework $(G,p')$ obtained from $(G,p)$ by projecting to a hyperplane.
This means that if all vertices are uncoloured, our infinitesimal rigidity theory  has a one-to-one correspondence with the infinitesimal rigidity in $\mathbb{R}^d$.
On the other hand, if each vertex has a distinct colour, then the concept coincides with the infinitesimal rigidity in $\mathbb{R}^{d+1}$. So this concept ``interpolates" the rigidity between $\mathbb{R}^d$ and $\mathbb{R}^{d+1}$.
 
\begin{proposition}
\label{prop:trivial}
Let $(G,\chi,p)$ be a framework on $r\mathbb{S}^d$.
Suppose that $p(V)$ affinely spans $\mathbb{R}^{d+1}$. Then 
\begin{equation}
{\rm rank}\ R(G,\chi,p)\leq {\rm rank}\ R_{\mathbb{S}^{d}}(G,\overline{p})+\min\{k,n-(d+1)\},
\end{equation}
where $(G,\overline{p})$ denotes the  framework on $\mathbb{S}^d$ obtained from $(G,p)$ by central projection.
\end{proposition}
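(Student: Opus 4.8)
The plan is to pass to the \emph{left} null spaces (spaces of self-stresses) and compare them directly, exploiting that the left block column of $R(G,\chi,p)$ in (\ref{eq:rigmat}) is precisely $R_{\mathbb S^d}(G,p):=\left(\begin{matrix}R(G,p)\\ S(G,p)\end{matrix}\right)$ — with $p$ itself, not $\overline p$. Concretely, a row vector $(\omega,\lambda)\in\mathbb R^E\times\mathbb R^V$ lies in the left kernel of $R(G,\chi,p)$ if and only if
\begin{align*}
\sum_{u:uv\in E}\omega_{uv}\bigl(p(v)-p(u)\bigr)+\lambda_v\,p(v)&=0 \qquad (v\in V),\\
\sum_{v\in\chi^{-1}(c)}\lambda_v&=0 \qquad (c\in\mathcal C),
\end{align*}
while deleting the second family of equations describes the left kernel of $R_{\mathbb S^d}(G,p)$. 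Write $\mathcal S_0\supseteq\mathcal S_\chi$ for these two spaces and let $\psi\colon\mathcal S_0\to\mathbb R^{\mathcal C}$ be the linear map $(\omega,\lambda)\mapsto\bigl(\sum_{v\in\chi^{-1}(c)}\lambda_v\bigr)_{c}$, so that $\mathcal S_\chi=\ker\psi$. Since $R(G,\chi,p)$ and $R_{\mathbb S^d}(G,p)$ have the same number of rows,
\[
\rank R(G,\chi,p)-\rank R_{\mathbb S^d}(G,p)=\dim\mathcal S_0-\dim\mathcal S_\chi=\rank\psi .
\]
Thus the proposition follows from the two claims (a) $\rank\psi\le\min\{k,n-(d+1)\}$ and (b) $\rank R_{\mathbb S^d}(G,p)=\rank R_{\mathbb S^d}(G,\overline p)$; note that (a) is exactly the improvement of the trivial bound $\rank\psi\le k$ to $\min\{k,n-(d+1)\}$.

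For (a), the estimate $\rank\psi\le k$ is immediate from the codomain. Moreover $\psi$ factors through the coordinate projection $\pi\colon\mathcal S_0\to\mathbb R^V$, $(\omega,\lambda)\mapsto\lambda$; if $L:=\pi(\mathcal S_0)$ then $\rank\psi\le\dim L$, so it suffices to prove $\dim L\le n-(d+1)$. Given $\lambda\in L$, choose $\omega$ satisfying the first family of equations and sum these equations over all $v\in V$: the edge contributions cancel in pairs, leaving $\sum_{v\in V}\lambda_v\,p(v)=0$. Hence $L$ is contained in the kernel of the linear map $\mathbb R^V\to\mathbb R^{d+1}$, $\lambda\mapsto\sum_v\lambda_v p(v)$. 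Because $p(V)$ affinely spans $\mathbb R^{d+1}$ (fix any $v_0$; the vectors $p(v)-p(v_0)$ already span $\mathbb R^{d+1}$, hence so do the $p(v)$), this map is onto, so its kernel has dimension $n-(d+1)$; therefore $\dim L\le n-(d+1)$ and (a) follows.

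For (b) one appeals to the radius-invariance of spherical rigidity coming from the coning correspondence of Saliola--Whiteley \cite{SaliolaWhiteley} and Izmestiev \cite{i}: the self-stresses counted by $R_{\mathbb S^d}(G,\cdot)$ coincide with the self-stresses of the framework obtained by coning from the origin (here $\lambda_v$ plays the role of the stress on the cone edge at $v$, the cone-apex equilibrium being the automatic identity $\sum_v\lambda_v p(v)=0$ noted above), and sliding a vertex radially toward or away from the apex of a cone preserves the self-stress space. Since $p$ and $\overline p$ differ by exactly such radial rescalings, $\rank R_{\mathbb S^d}(G,p)=\rank R_{\mathbb S^d}(G,\overline p)$; combining with (a) gives the stated bound.

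The first two paragraphs are routine linear algebra, resting only on the telescoping identity $\sum_v\lambda_v p(v)=0$ and one rank count. The step that needs the most care is (b): the cited correspondence is usually stated for a single sphere, so one should check that coning from the origin and radial invariance still apply when the vertices are distributed over several concentric spheres of different radii — this is exactly where the hypothesis $r(c)>0$ for all $c$ and the identification of $\lambda_v$ with the cone-edge stress at $v$ are used.
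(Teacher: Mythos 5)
Your proof is correct, but it runs through the left kernel (self-stresses) where the paper's own argument runs through the right kernel (infinitesimal motions), so the two are genuinely different. The paper establishes an explicit linear bijection $\dot{p}\mapsto\dot{q}$ with $\dot{q}(i)=\dot{p}(i)/\|p(i)\|$ between $\ker\begin{pmatrix}R(G,p)\\ S(G,p)\end{pmatrix}$ and $\ker R_{\mathbb{S}^d}(G,\overline{p})$ (this is exactly your claim (b), proved by a one-line computation rather than by appeal to coning — worth noting, since it settles your own caveat about several concentric radii: the rescaling works vertex-by-vertex for any positive radii), and then treats the two cases of the minimum separately: the bound $k$ comes from counting the colour columns, while for $k\geq n-(d+1)$ it exhibits a $(d+1-(n-k))$-dimensional space of translational infinitesimal motions transverse to the scaled spherical ones, which lower-bounds the kernel of $R(G,\chi,p)$. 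Your dual argument replaces that construction by the single telescoping identity $\sum_v\lambda_v\,p(v)=0$, which forces the $\lambda$-projection of the stress space into an $(n-(d+1))$-dimensional subspace and hence bounds $\rank\psi$ by $\min\{k,n-(d+1)\}$ in one stroke; it even gives the sharper statement that the rank increment \emph{equals} $\rank\psi$. What your route does not produce, and the paper's does, is the explicit family of translational motions satisfying (\ref{eq:prop1-1})--(\ref{eq:prop1-2}), which the paper immediately reuses to define the trivial motions and again in the proof of Lemma~\ref{lem:0extension}; so the constructive detour is not wasted there. Both arguments use the affine-spanning hypothesis in the same essential place (surjectivity of $\lambda\mapsto\sum_v\lambda_v p(v)$ versus transversality of translations and spherical motions).
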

\begin{proof}
By associating  $\dot{p}\in {\rm ker} \begin{pmatrix} R(G,p) \\ S(G,p)\end{pmatrix}$ 
with $\dot{q}\in {\rm ker} R_{\mathbb{S}^d}(G,\bar{p})$  
such that $\dot{q}(i)=\dot{p}(i)/\|p(i)\|$ for $i\in V$, we have a linear bijection between the two kernels,
i.e.,
\[{\rm dim }\ {\rm ker} R_{\mathbb{S}^d}(G,\overline{p})={\rm dim }\ {\rm ker} \begin{pmatrix}
R(G,p) \\ S(G,p)\end{pmatrix}.\]
For each $\dot{p}\in {\rm ker} \begin{pmatrix}
R(G,p) \\ S(G,p)\end{pmatrix}$, a pair $(\dot{p},\dot{r}=0)$ is an infinitesimal motion of $(G,\chi, p)$.
Hence $${\rm rank}\ R(G,\chi,p)\leq {\rm rank}\ R_{\mathbb{S}^{d}}(G,\overline{p})+k.$$

Suppose $k\geq n-(d+1)$.
For each colour $c$, take a representative vertex in $\chi^{-1}(c)$, denoted by $v_c$.
For any $x\in \mathbb{R}^{d+1}$, the pair $(\dot{p}, t)$ defined 
by $\dot{p}(v)=x$ for $v\in V$ and $\dot{r}(c)=p(v_c)\cdot x$ for each colour $c$ is an infinitesimal motion if 
\begin{align}
\label{eq:prop1-1}
(p(v_c)-p(u))\cdot x&=0 \qquad \forall c\in {\cal C}, \forall u\in \chi^{-1}(c)\setminus \{v_c\}, \\
p(v)\cdot x&=0 \qquad \forall \text{ uncoloured vertex } v. \label{eq:prop1-2}
\end{align}
This gives a system of $(n-k)$ linear equations on $x\in \mathbb{R}^{d+1}$.
Hence there are at least $d+1-(n-k)$ linearly independent choices for $x$.

Observe that, if $p(V)$ spans $\mathbb{R}^{d+1}$, the space of infinitesimal motions derived from this construction 
has a zero intersection with the space of infinitesimal motions derived from the kernel of $\begin{pmatrix} R(G,p) \\ S(G,p)\end{pmatrix}$ since the former is a subspace of the space of infinitesimal translations while the latter consists of (scaled) infinitesimal motions on the sphere with fixed radius.
In other words, if $d+1\geq n-k$, then the dimension of the infinitesimal motion is 
at least 
\[
(d+1)n-{\rm rank}\ R_{\mathbb{S}^{d}}(G,p)+d+1-(n-k).
\]
Hence the rank is bounded above by 
\[
(d+1)n+k-[(d+1)n-{\rm rank}\ R_{\mathbb{S}^{d}}(G,p)+d+1-(n-k)]=
{\rm rank}\ R_{\mathbb{S}^{d}}(G,p)+n-(d+1).
\]
\end{proof}

For a skew-symmetric matrix $S$ of size $(d+1)\times (d+1)$, 
define $\dot{p}:V\rightarrow \mathbb{R}^{d+1}$ by $\dot{p}(v)=Sp(v)$ for $v\in V$.
Then the pair $(\dot{p}, \dot{r}=0)$ is an infinitesimal motion of $(G,\chi,p)$.
Also if $k\geq n-(d+1)$ there is an infinitesimal motion for each $x\in \mathbb{R}^{d+1}$ 
satisfying (\ref{eq:prop1-1})(\ref{eq:prop1-2}) as given in the proof of Proposition~\ref{prop:trivial}.
A linear combination of those infinitesimal motions is said to be {\em trivial}.

\section{Preliminary observations}
\label{sec:prelim}

In this section we record some preliminary results that we shall need of the proof of our main theorem in Section~\ref{sec:comb}.

\subsection{Submodular functions}

For $F\subseteq E$, 
let $V(F)$ be the set of vertices spanned by $F$ and 
$k(F)$ be the number of colours in $V(F)$.
Proposition~\ref{prop:trivial} implies the following.
\begin{lemma}
\label{lem:necessity}
Let $n\geq d+1$ and let $(G,\chi,p)$ be a generic framework whose underlying graph $(G,\chi)$ is  $k$-coloured.
If $(G,\chi,p)$ is isostatic on $r\mathbb{S}^d$, then 
$|E|=dn-{d+1\choose 2}+\min\{k,n-(d+1)\}$ and $|F|\leq r_d(F)+\min\{k(F),|V(F)|-(d+1)\}$ for any $F\subseteq E$ 
with $|V(F)|\geq d+1$, 
where $r_d$ denotes the rank function of the $d$-dimensional generic rigidity matroid.
\end{lemma}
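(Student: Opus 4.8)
The plan is to derive both assertions from Proposition~\ref{prop:trivial} together with the definition of isostaticity. First I would observe that, since $(G,\chi,p)$ is generic and isostatic on $r\mathbb{S}^d$, the rows of the rigidity matrix $R(G,\chi,p)$ are linearly independent and its rank equals $(d+1)n-\binom{d+1}{2}+\min\{k,n-(d+1)\}$. Linear independence of the rows immediately gives $|E|+n = \mathrm{rank}\,R(G,\chi,p)$, and since the number of rows is $|E|+n$ while the rank formula is known, rearranging yields $|E| = dn - \binom{d+1}{2} + \min\{k,n-(d+1)\}$ after subtracting $n$; here one uses $(d+1)n - n = dn$. (One should double-check the indexing of the number of rows of $R(G,\chi,p)$ against the claimed size $(|E|+n)\times((d+1)n+k)$ to confirm this arithmetic.)

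For the subset inequality, the key is a standard restriction argument. Fix $F\subseteq E$ with $|V(F)|\ge d+1$. The submatrix of $R(G,\chi,p)$ consisting of the rows indexed by $F$ together with the $|V(F)|$ sphere-constraint rows indexed by $V(F)$, restricted to the columns of vertices in $V(F)$ and the colours appearing in $V(F)$, is exactly (a matrix row-equivalent to) $R(G[V(F)],\chi|_{V(F)},p|_{V(F)})$, where $G[V(F)]$ is the subgraph with vertex set $V(F)$ and edge set $F$. Because the rows of the full matrix $R(G,\chi,p)$ are independent, so are the rows of this submatrix, hence $|F| + |V(F)| \le \mathrm{rank}\,R(G[V(F)],\chi|_{V(F)},p|_{V(F)})$. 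Since $p$ is generic, $p(V(F))$ affinely spans $\mathbb{R}^{d+1}$ (as $|V(F)|\ge d+1$), so Proposition~\ref{prop:trivial} applies to this subframework and bounds its rank by $\mathrm{rank}\,R_{\mathbb{S}^d}(G[V(F)],\overline{p}) + \min\{k(F), |V(F)|-(d+1)\}$. Finally, by the correspondence between spherical rigidity with fixed radius and Euclidean rigidity in $\mathbb{R}^d$ cited in the excerpt (\cite{SaliolaWhiteley,i}), $\mathrm{rank}\,R_{\mathbb{S}^d}(G[V(F)],\overline{p}) = \mathrm{rank}\,R(G[V(F)],p') = r_d(F)$ for a generic $d$-dimensional realisation, since genericity of $p$ on $r\mathbb{S}^d$ pushes down to genericity of the projected framework. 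Combining, $|F| + |V(F)| \le r_d(F) + |V(F)| - (d+1) + \min\{k(F),\ldots\} + (d+1)$... one must be slightly careful here: Proposition~\ref{prop:trivial} as stated bounds $\mathrm{rank}\,R$, and $|F|+|V(F)| \le \mathrm{rank}\,R(\text{sub})$, so $|F| + |V(F)| \le r_d(F) + \min\{k(F),|V(F)|-(d+1)\} + |V(F)| - (d+1)$ would need the spherical rank to actually be $|V(F)|$ less than... Let me restate cleanly: $\mathrm{rank}\,R_{\mathbb{S}^d}(G[V(F)],\overline p)$ already counts the $|V(F)|$ sphere rows, and equals $r_d(F) + $ (something); tracking the $+n$ shift as in the $|E|$ computation gives $|F| \le r_d(F) + \min\{k(F),|V(F)|-(d+1)\}$ after the $|V(F)|$ terms cancel.

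The main obstacle I anticipate is precisely this bookkeeping: making sure the sphere-constraint rows are counted consistently on both sides, i.e. that $\mathrm{rank}\,R_{\mathbb{S}^d}$ includes the $S(G,p)$ rows and that its value is $r_d$ plus the appropriate offset, so that the $|V(F)|$ (or $n$) terms genuinely cancel and leave the clean inequality $|F|\le r_d(F)+\min\{k(F),|V(F)|-(d+1)\}$. A secondary point needing care is the genericity transfer: one must argue that a generic point configuration on $r\mathbb{S}^d$ projects to a generic configuration in $\mathbb{R}^d$ (equivalently, that the rank of the projected rigidity matrix attains its generic maximum $r_d(F)$), which follows from the definition of genericity in the excerpt together with the explicit form of the central projection. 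Everything else — the restriction to a submatrix preserving row independence, the affine spanning hypothesis being inherited — is routine.
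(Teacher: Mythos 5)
Your argument is correct and is exactly the route the paper intends: the paper offers no written proof beyond ``Proposition~\ref{prop:trivial} implies the following,'' and your restriction-to-a-submatrix argument, the identity $\mathrm{rank}\,R_{\mathbb{S}^d}(G[V(F)],\overline{p})=|V(F)|+r_d(F)$ (which makes the $|V(F)|$ terms cancel), and the genericity transfer under central projection are precisely the details being left implicit. The only quibble is your claim that $|V(F)|\geq d+1$ generic points \emph{affinely} span $\mathbb{R}^{d+1}$ (that needs $d+2$ points; what holds, and what the proof of Proposition~\ref{prop:trivial} actually uses, is that they \emph{linearly} span $\mathbb{R}^{d+1}$), and in the borderline case $|V(F)|=d+1$ the $\min$ term is $0$ anyway, so nothing is lost.
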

Our goal is to show the converse direction of Lemma~\ref{lem:necessity}. 
For this purpose it is convenient to know that the set of graphs satisfying the combinatorial count condition in Lemma \ref{lem:necessity} forms the set of bases of a matroid, which will follow from the following matroid construction by submodular functions.

For a finite set $E$, a function $f:2^E\rightarrow \mathbb{R}$ is called \emph{submodular}
if $f(X)+f(Y)\geq f(X\cup Y)+f(X\cap Y)$ for any $X, Y\subseteq E$ 
while $f$ is called \emph{monotone} if $f(X)\leq f(Y)$ for $X\subseteq Y\subseteq E$.
A monotone submodular function $f$ {\em induces} a matroid on $E$, 
where $I\subseteq E$ is independent  if and only if $|F|\leq f(F)$ for any nonempty $F\subseteq I$(\cite{E70}).
Suppose that $E$ is the edge set of a graph $G$. 
Then $G$ is said to be \emph{$f$-sparse} if $E$ is independent in the matroid induced by $f$,
and $G$ is said to be \emph{$f$-tight} if $G$ is $f$-sparse with $|E|=f(E)$.

In our problem, for a graph $G=(V,E)$ with $\chi$, we consider the following functions $h_d:2^E\rightarrow \mathbb{Z}$ 
and $g_d:2^E\rightarrow \mathbb{Z}$ defined by
\begin{equation*}
\begin{array}{ll}
h_d(F)=\min\{k(F),|V(F)|-(d+1)\}  &   (F\subseteq E) \\
g_d(F)=r_d(F)+h_d(F) & (F\subseteq E).
\end{array}
\end{equation*}
We remark that $k$ is submodular as 
\begin{equation}
\label{eq:k}
\begin{split}
k(F)+k(F')&=k(F\cup F')+\text{(\# colours common in $V(F)$ and $V(F')$)} \\
&\geq k(F\cup F')+k(F\cap F').
\end{split}
\end{equation}

\begin{lemma}
\label{lemma:sub}
The function $g_d$ is monotone and submodular.
\end{lemma}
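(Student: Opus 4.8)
The plan is to prove monotonicity and submodularity of $g_d = r_d + h_d$ by verifying each property for the summands separately, since a sum of monotone submodular functions is again monotone and submodular. Monotonicity of $r_d$ is the standard fact that the rank function of a matroid is monotone, and submodularity of $r_d$ is the defining submodular inequality of matroid rank functions; both hold for the $d$-dimensional generic rigidity matroid. So the real content is the analysis of $h_d(F) = \min\{k(F), |V(F)| - (d+1)\}$.

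For $h_d$, first I would record the two ingredients. The function $F \mapsto k(F)$ is submodular by the displayed inequality (\ref{eq:k}), and it is clearly monotone since enlarging $F$ can only enlarge $V(F)$ and hence not decrease the number of colours present. The function $F \mapsto |V(F)| - (d+1)$ is monotone for the same reason, and it is submodular because $|V(F)| + |V(F')| = |V(F\cup F')| + |V(F)\cap V(F')| \ge |V(F\cup F')| + |V(F\cap F')|$, using $V(F\cap F')\subseteq V(F)\cap V(F')$; subtracting the constant $d+1$ preserves submodularity (and the inequality direction, since the constant appears once on each side after the rearrangement — one needs to be slightly careful here, as I note below). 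Monotonicity of $h_d$ is then immediate: the minimum of two monotone functions is monotone.

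The main obstacle is that the minimum of two submodular functions need not be submodular in general, so submodularity of $h_d$ does not follow formally and must be argued by hand. The plan is the standard case analysis: fix $X, Y \subseteq E$ and consider which of the two terms achieves the minimum in $h_d(X)$ and in $h_d(Y)$. In each of the (essentially three) cases one bounds $h_d(X) + h_d(Y)$ from below using submodularity of whichever functions are active, and then bounds $h_d(X\cup Y) + h_d(X\cap Y)$ from above by choosing, for each of $X\cup Y$ and $X\cap Y$, the term one wishes to use (since $h_d \le $ each term pointwise). Concretely: if $h_d(X) = k(X)$ and $h_d(Y) = k(Y)$, use submodularity of $k$ together with $h_d(X\cup Y)\le k(X\cup Y)$ and $h_d(X\cap Y)\le k(X\cap Y)$; symmetrically if both minima are the vertex-count term, use submodularity of $|V(\cdot)| - (d+1)$. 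The cross case, say $h_d(X) = k(X)$ and $h_d(Y) = |V(Y)| - (d+1)$, is the delicate one: here I would bound $h_d(X\cup Y) \le |V(X\cup Y)| - (d+1)$ and $h_d(X\cap Y) \le k(X\cap Y)$, and then need $k(X) + |V(Y)| - (d+1) \ge |V(X\cup Y)| - (d+1) + k(X\cap Y)$, i.e. $k(X) - k(X\cap Y) \ge |V(X\cup Y)| - |V(Y)|$; since the left side counts colours in $X$ not already present in $X\cap Y$ and the right side counts vertices of $V(X\cup Y)$ not in $V(Y)$, and every such new vertex lies in $V(X)$, one relates the two via the facts that $h_d(X) = k(X) \le |V(X)| - (d+1)$ forces enough ``room'' — this monotone/minimum bookkeeping is exactly where care is needed and is the step I expect to occupy most of the proof.

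Once $h_d$ is shown to be monotone and submodular, adding the monotone submodular $r_d$ gives that $g_d$ is monotone and submodular, completing the argument; by the matroid-from-submodular-function construction cited as (\cite{E70}) this then justifies that the count condition of Lemma~\ref{lem:necessity} defines a matroid, which is the use to which the lemma will be put.
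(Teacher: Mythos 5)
Your overall strategy (reduce to $h_d$, note that a minimum of submodular functions need not be submodular, and handle $h_d$ by a case analysis on which term of the minimum is active) is sound, and the two ``pure'' cases go through exactly as you describe. But the cross case as you have set it up does not work: the intermediate inequality you propose to prove, $k(X) - k(X\cap Y) \geq |V(X\cup Y)| - |V(Y)|$, is false in general. The right-hand side equals $|V(X)\setminus V(Y)|$, while the left-hand side is the number of colours of $V(X)$ absent from $V(X\cap Y)$, which is at \emph{most} (not at least) the number of vertices of $V(X)$ outside $V(X\cap Y)$ --- and can be much smaller when $X$ has uncoloured vertices or repeated colours. Concretely, for $d=1$ let $X$ be a path on four vertices with exactly one coloured vertex and let $Y$ be a vertex-disjoint edge whose two endpoints carry two new colours. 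Then $h_1(X)=k(X)=1<2=|V(X)|-2$ and $h_1(Y)=|V(Y)|-2=0<2=k(Y)$, so this is your cross case, and your inequality reads $1-0\geq 4$. (Submodularity itself does hold here, because $h_1(X\cap Y)=h_1(\emptyset)=-2$, not $k(\emptyset)=0$, and $h_1(X\cup Y)=k(X\cup Y)=3$, not $|V(X\cup Y)|-2=4$: both of the upper bounds you selected are the wrong ones.)

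The repair is to make the opposite pairing in the cross case: bound $h_d(X\cup Y)\leq k(X\cup Y)$ and $h_d(X\cap Y)\leq |V(X\cap Y)|-(d+1)$, so that what is needed is $k(X\cup Y)-k(X)\leq |V(Y)|-|V(X\cap Y)|$. This does hold: every colour of $V(X)\cup V(Y)$ not already present in $V(X)$ is carried by some vertex of $V(Y)\setminus V(X)$, whence $k(X\cup Y)-k(X)\leq |V(Y)\setminus V(X)|\leq |V(Y)|-|V(X)\cap V(Y)|\leq |V(Y)|-|V(X\cap Y)|$. With that substitution your argument closes. For comparison, the paper sidesteps the union/intersection bookkeeping by verifying the equivalent diminishing-returns form $h_d(X+e)-h_d(X)\geq h_d(Y+e)-h_d(Y)$ for $X\subseteq Y$ and casing on whether $k(X)>|V(X)|-(d+1)$; the two routes are of comparable length once your cross case is fixed.
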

\begin{proof}
As the sum of two monotone submodular functions is monotone submodular, it suffices to show 
the property for $h_d$.
The monotonicity is obvious.
To see the submodularity we shall check $h_d(X+e)-h_d(X)\geq h_d(Y+e)-h_d(Y)$ for any $X\subseteq Y\subseteq E$ and $e\in E\setminus Y$,
which is known to be equivalent to the submodularity.

Note that $h_d(X+e)-h_d(X)\leq 2$ and $h_d(Y+e)-h_d(Y)\leq 2$.
Hence it suffices to consider the case when $h_d(X+e)-h_d(X)\leq 1$.

Suppose first that $k(X)> |V(X)|-(d+1)$.
Then by $h_d(X+e)-h_d(X)\leq 1$ we have $|V(X+e)|-(d+1)\leq |V(X)|-d\leq k(X)\leq k(X+e)$.
Hence $h_d(X+e)-h_d(X)=|V(X+e)|-|V(X)|$.
If $k(Y)\geq |V(Y)|-(d+1)$, then $h_d(Y)=|V(F)|-(d+1)$ and the submodularity of $|V(\cdot)|$ implies that
$|V(X+e)|-|V(X)|\geq |V(Y+e)|-|V(Y)|\geq h_d(Y+e)-h_d(Y)$.
On the other hand if $k(Y)\leq |V(Y)|-(d+1)$ then 
$k(Y+e)\leq |V(Y+e)|-(d+1)$.
Hence $|V(X+e)|-|V(X)|\geq k(Y+e)-k(Y)=h_d(Y+e)-h_d(Y)$.

Suppose next that $k(X)\leq |V(X)|-(d+1)$.
Note that $k(Y)\leq k(X)+|V(Y)\setminus V(X)|$.
Hence we have $k(Y)\leq |V(Y)|-(d+1)$.
Note also that if $k(X)\leq |V(X)|-(d+1)$, then $k(X+e)\leq |V(X+e)|-(d+1)$.
Therefore, 
$h_d(X+e)-h_d(X)=k(X+e)-k(X)\geq k(Y+e)-k(Y)\geq h_d(Y+e)-h_d(Y)$,
where the second inequality follows from the submodularity of $k$.
\end{proof}


\subsection{Geometric operations}

In a $k$-coloured graph $(G,\chi)$, a vertex of colour $c$ is said to be \emph{colour-isolated} if there is no other vertex of colour $c$.
We consider the following two operations that reduce a coloured graph to a smaller one.

A ($d$-dimensional) {\em 0-reduction} removes a vertex $v$ with degree equal to $g_d(E)-g_d(E(G-v))$, where $E(G-v)$ denotes the edge set of $G-v$. 
More specifically, if $k\geq |V(G)|-(d+1)$ then 
a 0-reduction removes a vertex of degree $d+1$,
and if $k< |V(G)|-(d+1)$ then a 0-reduction removes 
a non-colour-isolated vertex or an uncoloured vertex of degree $d$, or  a colour-isolated vertex of degree $d+1$.

A ($d$-dimensional) {\em 1-reduction} removes a non-colour-isolated vertex of degree $d+1$, an uncoloured vertex of degree $d+1$, or a colour-isolated vertex of degree $d+2$, and then inserts a new edge between two distinct vertices adjacent to the removed vertex in the original graph.
The reverse operation of $0$- or $1$-reduction is called {\em $0$-  or $1$-extension}. 

\begin{lemma}
\label{lem:0extension}
Let $(G,\chi,p)$ be a generic isostatic framework on $r\mathbb{S}^d$ and let $(G',\chi')$ be formed from $(G,\chi)$ by a 0-extension. Then any generic realisation of $(G',\chi')$ is isostatic on $r\mathbb{S}^d$.
\end{lemma}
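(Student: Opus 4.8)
The plan is to show that a $0$-extension preserves both the count and the independence of the rigidity matrix, exploiting the block structure of $R(G,\chi,p)$ in \eqref{eq:rigmat}. Since the claim is about any generic realisation, it suffices to exhibit one generic (or even just sufficiently generic) realisation of $(G',\chi')$ that is isostatic; the rank of the rigidity matrix at a generic point is maximal, so independence at one generic point forces it at all of them. So the first step is to fix a generic isostatic $(G,\chi,p)$ on $r\mathbb{S}^d$, and to place the new vertex $v'$ introduced by the $0$-extension at a generic point on the appropriate sphere (the sphere of radius $r(c)$ if $v'$ receives an existing colour $c$, the unit sphere if $v'$ is uncoloured, or a fresh radius $r(c')$ if $v'$ is colour-isolated, in which case $k$ increases by one as well).

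The main work is a case analysis according to the degree of $v'$ and whether the colour count $k$ increases, following the definition of $0$-reduction/$0$-extension given just before the statement. In each case I would verify that the target rank jumps by exactly the right amount: $d+1$ new coordinate-columns for $v'$ (plus possibly one new colour-column), and the target rank $(d+1)n-\binom{d+1}{2}+\min\{k,n-(d+1)\}$ increases by the degree of $v'$. The combinatorial bookkeeping here is precisely the identity $\deg(v') = g_d(E') - g_d(E(G'-v')) = g_d(E') - g_d(E)$ that defines a $0$-extension, so the numerical side is automatic from Lemma~\ref{lemma:sub} and the definition; what must be checked geometrically is that the new rows of $R(G',\chi',p')$ — the $\deg(v')$ edge-rows incident to $v'$, together with the one sphere-constraint row \eqref{eq:const3} for $v'$ — are linearly independent modulo the row space of $R(G,\chi,p)$. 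Because every such new row has a nonzero entry in one of the new columns indexed by $v'$ (the edge rows contain $p'(v')-p'(u)$ there, the sphere row contains $p'(v')$ there), this reduces to showing that the restriction of these new rows to the $(d+1)$ columns of $v'$ (and the possible new colour-column, which is handled trivially since only the sphere row of $v'$ occupies it and that row is nonzero) has full row rank. Concretely: the edge rows restricted to $v'$ give the vectors $p'(v')-p'(u_i)$ for the neighbours $u_i$, and the sphere row gives $p'(v')$; these $\deg(v')+[v'\ \text{coloured}]$ vectors must be independent in $\mathbb{R}^{d+1}$.

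The hard part — though it is routine once set up — is exactly this genericity/independence verification in the borderline cases, in particular the colour-isolated vertex of degree $d+1$ and the non-colour-isolated/uncoloured vertex of degree $d$. Here one uses that $p'(v')$ is algebraically independent of the coordinates of $p$ over the field generated by them (subject only to lying on its prescribed sphere), so that the vectors $p'(v')-p'(u_1),\dots,p'(v')-p'(u_m)$ together with $p'(v')$ (when present) are in generic position in $\mathbb{R}^{d+1}$ relative to the already-fixed points $p'(u_i)$; a standard dimension count shows this fails only on a proper subvariety in the coordinates of $v'$, which generic placement avoids. One subtlety to flag: when $v'$ is colour-isolated its sphere row \eqref{eq:const3} occupies a brand-new column $V(G',\chi',p')$, so that row is automatically independent of everything else and one only needs the $d+1$ edge rows to be independent on the $v'$-block, i.e.\ $p'(v')-p'(u_1),\dots,p'(v')-p'(u_{d+1})$ must span $\mathbb{R}^{d+1}$ — which is generic since the $u_i$ affinely span a $d$-flat avoiding $p'(v')$. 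Assembling these, the new $(d+1)n'+k'$-column matrix $R(G',\chi',p')$ has rank equal to $\operatorname{rank} R(G,\chi,p) + \deg(v')$, and by Lemma~\ref{lem:necessity} and Proposition~\ref{prop:trivial} this is exactly the target value, so $(G',\chi',p')$ is isostatic; genericity of the chosen realisation then transfers the conclusion to every generic realisation.
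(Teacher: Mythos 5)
Your general strategy --- place the new vertex generically and show that the $\deg(v')+1$ new rows (the edge rows plus the sphere row) are independent of the old row space by inspecting the new columns --- works in two of the three cases, and there it matches the paper, which simply asserts that the row independence is "easily checked from the definition." But there is a third case that your case list omits and that your method cannot handle: a $0$-extension may add a \emph{non-colour-isolated} vertex of degree $d+1$ (this is the branch $k'\geq |V(G')|-(d+1)$ of the definition, and it is precisely the case invoked in the proof of Theorem~\ref{thm:1d} when $k\geq|V|-2$). There the matrix gains $d+2$ new rows but only $d+1$ new columns, since no new colour column appears; the restriction of the new rows to the columns of $v'$ therefore consists of $d+2$ vectors in $\mathbb{R}^{d+1}$ and can never have full row rank. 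Your step "this reduces to showing the restriction has full row rank" is only a sufficient condition for independence modulo the old row space, and in this case it is unavailable: the independence must come from the entries of the new rows in the \emph{old} columns as well.

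The paper closes this case with a kernel-dimension argument rather than a column argument: any infinitesimal motion of $(G',\chi',p')$ restricts to a trivial motion of the infinitesimally rigid $(G,\chi,p)$, and the $d+1$ edges at $v'$ then force the whole motion to be trivial; one next observes that the space of trivial motions of $G'$ is one dimension smaller than that of $G$, because the new vertex contributes one more generic equation to the system (\ref{eq:prop1-1})--(\ref{eq:prop1-2}) governing the translational trivial motions. Hence $\dim\ker R(G',\chi',p')=\dim\ker R(G,\chi,p)-1$ while the number of columns grows by $d+1$, so the rank grows by $d+2$, equal to the number of new rows --- giving rigidity and independence simultaneously. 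You would need to supply an argument of this kind (or some other way of seeing that the $d+2$ new rows are independent modulo the old ones) to complete your proof. A smaller point: the target rank increases by $\deg(v')+1$, not by $\deg(v')$, because of the extra sphere row; your later bookkeeping is consistent with this, but the sentence as written is off by one.
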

\begin{proof} 
Let $v$ be the new vertex and let $(G',\chi',p')$ be a generic realization obtained by extending $(G,\chi, p)$.
If $v$ is colour-isolated, $R(G',\chi',p')$ has $d+1$ new rows for the edges incident to $v$ and the new row for $v$. Since the number of columns is increased by $d+2$, one can easily check the row independence of $R(G',\chi',p')$ from the definition.

Hence assume that $v$ is not colour-isolated.
Let $k'$ be the number of colours in $G'$.
If $k'<|V(G')|-(d+1)$, then the number of new edges is $d$, and  it is straightforward to check the row independence of $R(G',\chi',p')$.
Hence assume $k'\geq |V(G')|-(d+1)$.
Take any infinitesimal motion $(\dot{p},\dot{r})$ of $(G',\chi',p')$.
Since $(G,\chi,p)$ is infinitesimally rigid, the restriction of $(\dot{p},\dot{r})$ to $(G,\chi,p)$ turns out to be trivial. Moreover, since $d+1$ new edges are added, $(\dot{p},\dot{r})$ is trivial.
Now observe that the dimension of the trivial motions decreases by one
 since the dimension of the solution space of the linear system (\ref{eq:prop1-1})(\ref{eq:prop1-2}) decreases by one when $p'$ is generic.   
This means that ${\rm dim}\ {\rm ker}\ R(G',\chi',p')={\rm dim}\ {\rm ker}\ R(G,\chi, p)-1$.
Thus ${\rm rank}\ R(G',\chi',p')-{\rm rank}\ R(G,\chi,p)=d+2$.
In other words $(G',\chi',p')$ is infinitesimally rigid.
\end{proof}

For a graph $G'$ and a vertex $v$ in $G'$ let $N_{G'}(v)$ be the set of neighbors of $v$ in $G'$.
\begin{lemma}
\label{lem:1extension1}
Let $(G,\chi,p)$ be a generic isostatic framework on $r\mathbb{S}^d$ and let $(G',\chi')$ be formed from $(G,\chi)$ by a 1-extension. Suppose that the new vertex $v$ is colour-isolated or 
all vertices in $N_{G'}(v)\cup \{v\}$ are uncoloured. Then any generic realisation of $(G',\chi')$ is isostatic on $r\mathbb{S}^d$.
\end{lemma}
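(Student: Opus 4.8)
The plan is to produce a single, not necessarily generic, isostatic realisation of $(G',\chi')$ on concentric spheres and then invoke semicontinuity of the rank of the rigidity matrix. Write $\{x,y\}$ for the edge of $G$ that is deleted by the underlying $1$-reduction, so that $G'$ is obtained from $G$ by deleting $xy$, adding $v$, and joining $v$ to $x,y$ and to $d-1$ further vertices $z_1,\dots,z_{d-1}$ (if $v$ is uncoloured, so $\deg_{G'} v=d+1$) or to $d$ further vertices $z_1,\dots,z_d$ (if $v$ is colour-isolated, so $\deg_{G'} v=d+2$). First I would record the Maxwell count $|E(G')|=d|V(G')|-\binom{d+1}{2}+\min\{k',|V(G')|-(d+1)\}$: this is immediate from $|E(G)|=dn-\binom{d+1}{2}+\min\{k,n-(d+1)\}$ together with the elementary identities $\min\{k+1,n-d\}-1=\min\{k,n-(d+1)\}$ (colour-isolated case) and $\min\{k,n-(d+1)\}=\min\{k,n-d\}$ (uncoloured case, which holds exactly in the range in which an uncoloured $1$-reduction is defined); moreover in both cases the dimension of the space of trivial motions agrees for $(G,\chi,p)$ and $(G',\chi',p')$. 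Since the realisation variety of $(G',\chi')$ on variable concentric $d$-spheres is irreducible and $\rank R$ attains its maximum on a Zariski-open set that contains every generic realisation, it then suffices to exhibit one realisation on some $r'\mathbb{S}^d$ whose rigidity matrix has linearly independent rows.

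I would build such a $p'$ from the given generic isostatic $p$ by keeping $p$ on $V(G)$ and placing $v$ inside the plane $\spa\{p(x),p(y)\}$. If $v$ is colour-isolated, choose the new radius $r'(\chi'(v))$ large enough and put $p'(v)=tp(x)+(1-t)p(y)$ with $t\notin\{0,1\}$, i.e.\ on the affine line through $p(x),p(y)$; if $v$ and all its neighbours are uncoloured, put $p'(v)=\alpha p(x)+\beta p(y)$ at a point of the circle $\mathbb{S}^d\cap\spa\{p(x),p(y)\}$ with $\alpha\beta\neq 0$. Take any $(\dot p',\dot r')\in\ker R(G',\chi',p')$. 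In the colour-isolated case the row of edge $vx$ plus an appropriate multiple of the row of $vy$ is a nonzero scalar times the deleted row of $xy$ — the affine-line placement is exactly what makes the $\dot p'(v)$-columns cancel — and the sphere row of $v$ plays no role because $\dot r'(\chi'(v))$ occurs in no other equation. In the uncoloured case one instead eliminates $p(x)\cdot\dot p'(v)$ and $p(y)\cdot\dot p'(v)$ between the two edge equations $(p'(v)-p(x))\cdot(\dot p'(v)-\dot p'(x))=0$, $(p'(v)-p(y))\cdot(\dot p'(v)-\dot p'(y))=0$ and the sphere equation $p'(v)\cdot\dot p'(v)=0$, and uses the sphere equations $p(x)\cdot\dot p'(x)=p(y)\cdot\dot p'(y)=0$; what remains is exactly $(p(x)-p(y))\cdot(\dot p'(x)-\dot p'(y))=0$. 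Either way, the missing edge-$xy$ constraint holds, so $\dot p'|_{V(G)}\in\ker R(G,\chi,p)$ is a trivial motion of the isostatic framework $(G,\chi,p)$; the rows at $v$ that remain after this extraction (the edges to the $z_i$, plus the unused part of the rows $vx,vy$) then impose $d+1$ independent linear conditions on $\dot p'(v)\in\mathbb{R}^{d+1}$ — the relevant $d+1$ vectors span $\mathbb{R}^{d+1}$ for generic $p$, independently of how $v$ was placed, because modulo $\spa\{p(x),p(y)\}$ they reduce to the $p(z_i)$ — so $\dot p'(v)$, and hence $\dot r'$, are uniquely determined. Thus $\dim\ker R(G',\chi',p')$ equals the dimension of the trivial motions, so $R(G',\chi',p')$ has linearly independent rows at this $p'$, which by the reduction in the previous paragraph completes the argument.

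The step I expect to be the main obstacle is the uncoloured case. On a fixed sphere the classical $1$-extension trick of placing the new vertex on the \emph{line} through $p(x)$ and $p(y)$ is unavailable, since that line meets $\mathbb{S}^d$ only at $p(x)$ and $p(y)$; one must instead use the \emph{great circle} through these points and recover the deleted edge constraint through the elimination above, which is the genuinely new computation. This is also precisely where the hypothesis of the lemma is needed: if $v$ shared its colour with another vertex, $\dot r'(\chi'(v))$ would not be free, the sphere row of $v$ would carry information relevant elsewhere, and the elimination would fail. A secondary, routine, matter is to check that the non-generic placement of $v$ still satisfies the finitely many open conditions needed (certain determinants nonzero), which as indicated above reduce to genericity conditions on $p$ alone.
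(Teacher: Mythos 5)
Your argument is exactly the ``collinear triangle technique'' that the paper's proof invokes without detail: place the new vertex on the line through $p(x),p(y)$ in the colour-isolated case (using the freedom in the new radius) and on the great circle $\mathbb{S}^d\cap\spa\{p(x),p(y)\}$ in the uncoloured case, recover the deleted $xy$-row, and conclude by genericity/semicontinuity --- so this is essentially the same approach, carried out correctly and in more detail than the paper gives (in particular the elimination recovering $(p(x)-p(y))\cdot(\dot p'(x)-\dot p'(y))=0$ on the great circle checks out). The only nitpick is that in the colour-isolated case your ``modulo $\spa\{p(x),p(y)\}$'' justification of the spanning claim involves $d$ vectors in a $(d-1)$-dimensional quotient and so needs the extra (easy, $t$-independent) column-operation computation $\det[p(x)-p(y),\,p'(v)-p(z_1),\dots]=\det[p(x)-p(y),\,p(y)-p(z_1),\dots]\neq 0$ to close it.
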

\begin{proof}
Let $v$ be the new vertex.
If $v$ is colour-isolated, we may freely choose $r(\chi(v))$.
Hence we can prove the statement by using the standard collinear triangle technique (see~\cite[Theorem 2.2.2]{Wlong} for example).

If $N_{G'}(v)\cup \{v\}$ are uncoloured, the situation is the same as the conventional rigidity on the fixed sphere,
and can be solved by extending the collinear triangle technique.
(Alternatively one can also apply the limit argument given in \cite{nop2}.)
\end{proof}

For non-colour-isolated vertices we use a technique reminiscent of \cite[Section 6]{nop2}. This can be used to prove that ($d$-dimensional) 1-extension preserves rigidity in the case $k=1$. However our proof technique cannot be applied for general coloured graphs, and instead we  have the following statement.

\begin{lemma}
\label{lem:1extension2}
Let $(G,\chi,p)$ be a generic framework on $r\mathbb{S}^d$
and $v$ be a non-colour-isolated vertex of degree more than $d$.
Suppose that both $(G,p)$ and $(G-v,p)$ are one-degree of freedom frameworks.
Then there is a nontrivial infinitesimal motion  $(\dot{p}, \dot{r})$ of $(G,\chi,p)$ such that 
$\dot{r}(\chi(v))=0$ and $\dot{r}(\chi(u))=0$ for any $u\in N_G(v)$. 
\end{lemma}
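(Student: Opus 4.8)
The plan is to construct the desired infinitesimal motion by combining a one-degree-of-freedom flex of $G-v$ with a suitable choice of velocity for $v$ itself, controlling the radial derivatives along the way. Since $(G-v,\chi,p)$ has a one-dimensional space of nontrivial flexes modulo trivial motions (this is what ``one-degree of freedom'' means here), pick a nontrivial infinitesimal motion $(\dot p_0,\dot r_0)$ of $(G-v,\chi,p)$. The idea is to extend $\dot p_0$ to $v$ by choosing $\dot p(v)$ to satisfy the $\deg_G(v)$ edge equations \eqref{eq:const1} for the edges incident to $v$, together with the sphere constraint \eqref{eq:const3} for $v$. Because $v$ has degree more than $d$, at a generic point the $d+1$ coordinates of $p(v)$ and the relevant right-hand sides from $\dot p_0$ will in general be over-determined, so one cannot simply extend; instead one modifies $(\dot p_0,\dot r_0)$ first by adding a trivial motion, and adjusts $\dot r$ on $\chi(v)$, exploiting the fact that $v$ is non-colour-isolated so that $\chi(v)$ also appears on some other vertex and hence $\dot r(\chi(v))$ is already constrained by the flex of $G-v$.

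First I would set up the linear algebra precisely. Let $N_G(v)=\{u_1,\dots,u_m\}$ with $m>d$. An extension of $(\dot p_0,\dot r_0)$ to an infinitesimal motion of $(G,\chi,p)$ requires finding $\dot p(v)\in\mathbb{R}^{d+1}$ and possibly a revised value of $\dot r(\chi(v))$ with
\begin{align*}
(p(v)-p(u_i))\cdot(\dot p(v)-\dot p_0(u_i))&=0\qquad(i=1,\dots,m),\\
p(v)\cdot\dot p(v)&=\dot r(\chi(v)).
\end{align*}
The first $m$ equations, viewed as conditions on $\dot p(v)$, lie in a system whose coefficient matrix is the $m\times(d+1)$ submatrix of $R(G,p)$ formed by the rows of edges at $v$, restricted to the columns of $v$. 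Because $(G-v,p)$ is a one-degree-of-freedom framework and $(G,p)$ is too (hypothesis), deleting $v$ from $G$ removes exactly $\deg_G(v)$ rows from the $(d+1)$-dimensional rigidity matrix but the rank drops by only $d+1$ of them in the generic position (since both have corank one more than trivial); this forces a one-dimensional space of compatible values of $\dot p(v)$ once $\dot p_0$ is fixed, provided the right-hand side lies in the image — and solvability of the right-hand side is exactly the statement that $(G,p)$ still has a flex, which we are given. I would extract from this that there is a choice of $\dot p(v)$ solving the $m$ edge equations for *some* representative flex $\dot p_0$ of $G-v$, after possibly adding to $\dot p_0$ a trivial infinitesimal motion of $(G-v,\chi,p)$.

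The key remaining point is the radial bookkeeping. Since $v$ is non-colour-isolated, $\chi(v)$ is the colour of at least one other vertex $w\neq v$ in $G$, and $w$ lies in $G-v$; hence $\dot r_0(\chi(v))$ is already determined by the flex $(\dot p_0,\dot r_0)$ via $p(w)\cdot\dot p_0(w)=\dot r_0(\chi(v))$. We are free to add trivial motions of $(G-v,\chi,p)$ — both infinitesimal rotations $S p(\cdot)$ (which have $\dot r=0$) and, if applicable, translations satisfying \eqref{eq:prop1-1}--\eqref{eq:prop1-2} — to arrange $\dot r(\chi(v))=0$; this is a single scalar condition and the flex space plus trivials is at least two-dimensional, so generically it can be met while keeping the motion nontrivial. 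The analogous arrangement for each $u\in N_G(v)$: the hypothesis that $(G-v,p)$ has only one degree of freedom, combined with genericity, should let us show that there is a one-dimensional family of flexes of $G-v$, and within it (again using added trivials) we can force $\dot r(\chi(u))=0$ for all neighbours of $v$ simultaneously — here one uses that the number of distinct colours on $N_G(v)$ is bounded and that the space of flexes-plus-trivials has enough dimensions; if not, one argues by a dimension count that the generic flex already has these radial derivatives zero because the colour classes $\chi(u)$ for $u\in N_G(v)$ are ``pinned'' by the rest of the graph.

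The main obstacle I anticipate is precisely this last step: guaranteeing that the one remaining degree of freedom of $G-v$ can be normalised so that *all* the radial derivatives $\dot r(\chi(v))$ and $\dot r(\chi(u))$, $u\in N_G(v)$, vanish simultaneously, while the resulting $(\dot p,\dot r)$ of $G$ stays nontrivial. This is a constrained linear-algebra statement that must use genericity of $p$ in an essential way — most likely by showing that the map from the flex space of $G-v$ (quotiented by trivials) to the tuple of relevant radial derivatives is generically either zero or injective on a space of dimension exceeding the number of colours involved, forcing a nonzero kernel element. I would handle it by contradiction: if no such flex existed, then the radial-derivative map would be injective, which would translate (via the rigidity matrix structure \eqref{eq:rigmat}) into a rank bound on $R(G-v,\chi,p)$ contradicting the one-degree-of-freedom hypothesis. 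Verifying that translation contribution carefully, and keeping track of the $\min\{k,n-(d+1)\}$ term, will be the delicate part.
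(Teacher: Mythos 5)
Your proposal correctly sets up the extension problem and correctly identifies the crux --- forcing all the radial derivatives $\dot{r}(\chi(v))$ and $\dot{r}(\chi(u))$, $u\in N_G(v)$, to vanish simultaneously --- but the mechanisms you offer for resolving it do not work, so there is a genuine gap. First, adding trivial motions gives you essentially no control over $\dot{r}$: the rotational trivial motions $\dot{p}=Sp$ satisfy $p(v)\cdot Sp(v)=0$ and hence have $\dot{r}\equiv 0$, while the translation-type trivial motions exist only in the regime $k\geq n-(d+1)$ and even there form a space of dimension at most $d+1-(n-k)$, too small to meet the several scalar conditions you need. In the typical regime $k<n-(d+1)$ the space of flexes modulo trivials is exactly one-dimensional, so the tuple of radial derivatives of the nontrivial flex is determined up to a global scalar; there is nothing to ``normalise''. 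Second, your closing contradiction argument fails for the same reason: the radial-derivative map being injective on a one-dimensional flex space is perfectly consistent with the one-degree-of-freedom hypothesis and yields no rank contradiction. A priori the unique flex could well have $\dot{r}(\chi(v))\neq 0$; the lemma asserts a genuine geometric fact that must be proved, not arranged by dimension counting.

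The paper's proof uses an idea absent from your proposal: since $(G-v,p)$ has one degree of freedom, its nontrivial flex $(\dot{p}|_{V-v},\dot{r})$ is algebraic over $\mathbb{Q}(p(V-v))$ and in particular does not depend on $p(v)$, whereas $p(v)$ is generic on its sphere over that field. Eliminating $\dot{p}(v)$ from the edge and sphere constraints at $v$ produces a polynomial identity in $p(v)$ (equation (\ref{eqn:1-ex})) with coefficients algebraic over $\mathbb{Q}(p(V-v))$, which must therefore lie in the ideal generated by $x^{\top}x-r(\chi(v))^2$ and so holds for \emph{every} point of that sphere. Substituting the special points $x=-s_ip(i)$ and $x=-s_{ij}(p(i)+p(j))$ yields the relations (\ref{eqn:16}) and (\ref{eqn:17}), and the strict triangle inequality $\|p(i)\|+\|p(j)\|>\|p(i)+p(j)\|$ then forces $\dot{r}(\chi(v))=0$ and hence $\dot{r}(\chi(u))=0$ for all neighbours. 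Without some such specialisation (or an equivalent algebraic argument exploiting the genericity of $p(v)$ over the remaining points), the vanishing you need cannot be extracted from the one-dimensional flex space.
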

\begin{proof}
The vector $(\dot{p}, \dot{r})$ is an infinitesimal motion iff 
\begin{align*}
\langle p(v), \dot{p}(v)\rangle&=\dot{r}(\chi(v)) \qquad \forall v\in V \mbox{ and} \\
\langle p(u), \dot{p}(v)\rangle+\langle p(v), \dot{p}(u)\rangle&=\dot{r}(\chi(u))+\dot{r}(\chi(v)) \qquad  \forall uv\in E(G),
\end{align*}
where $\dot{r}(\chi(v))=0$ if $v$ is uncoloured. Without loss of generality we may assume that the vertices $\{1,\dots, {d+1}\}$ are neighbors of $v$ in $G$. 

Let $p(V-v)=\{p(v'):v'\in V-v\}$ and
let $\mathbb{Q}(p(V-v))$ be the field generated by the rationals and the entries in $p(V-v)$. Note that the set of  coordinates of $p_v$ is generic  over $\mathbb{Q}(p(V-v))$ in the sphere, i.e., 
any vanishing polynomial with coefficients in $\mathbb{Q}(p(V-v))$ is in the ideal generated by 
$p(v)^{\top} p(v) -r(\chi(v))^2$.

Since $v$ has degree at least $d+1$ and is not colour-isolated, a trivial infinitesimal motion of $(G-v,p)$ cannot be extended to a nontrivial infinitesimal motion of $(G,p)$. 
In other words the restriction of a nontrivial infinitesimal motion  $(\dot{p},\dot{r})$ of $(G,p)$ onto $V-v$ is nontrivial for $(G-v, p)$. 
Since both $(G,p)$ and $(G-v,p)$ have one-degree of freedom and
 a nontrivial infinitesimal motion of $(G-v,p)$ is obtained by solving a system of linear equations with coefficients in $\mathbb{Q}(p(V-v))$,  we may further assume that each coordinate of $\dot{p}(V-v)$ and $\dot{r}$ is algebraic over $\mathbb{Q}(p(V-v))$
(where we used the fact that $v$ is not colour-isolated to see that $\dot{r}(\chi(v))$ is also algebraic over 
$\mathbb{Q}(p(V-v))$).

Let $A$ be the $(d+1)\times (d+1)$-matrix whose $i$-th row is $p(i)^{\top}$.
$A$ is nonsingular. Let $\dot{A}$ be $(d+1)\times (d+1)$-matrix whose $i$-th row is $\dot{p}(i)^{\top}$.
Let ${\bm t}$ be the $(d+1)$-dimensional vector whose $i$-th coordinate is $\dot{r}(\chi(i))$,
and let $\mathbf{1}$ be the all-one vector. Now, from the condition for $(\dot{p},\dot{r})$ to be an infinitesimal motion, we have
\begin{equation*}
A\dot{p}(v)+\dot{A}p(v)=\dot{r}(\chi(v))\mathbf{1}+{\bm t}
\end{equation*}
\begin{equation*}
\langle p(v), \dot{p}(v)\rangle=\dot{r}(\chi(v)).
\end{equation*}
Hence 
\begin{equation*}
\dot{p}(v)=A^{-1}(\dot{r}(\chi(v))\mathbf{1}+{\bm t}-\dot{A}p(v))
\end{equation*}
and 
\begin{equation}
\label{eqn:1-ex}
p(v)^{\top}A^{-1}\dot{A}p(v)-\dot{r}(\chi(v))p(v)^{\top}A^{-1}\mathbf{1}-p(v)^{\top}A^{-1}{\bm t}+\dot{r}(\chi(v))=0.
\end{equation}
Let $s_i=\frac{\|p(v)\|}{\|p(i)\|}$. Due to the choice of $p(v)$, the left hand side polynomial can be divided by $p^{\top}(v)p(v)-r(\chi(v))^2$. 
Therefore  (\ref{eqn:1-ex}) holds by substituting $p(v)$ with any $x\in \mathbb{R}^{d+1}$ satisfying $x^{\top}x=r(\chi(v))^2$.

In particular, since $(-s_ip(i))^{\top} (-s_ip(i))=r(\chi(v))^2$, by setting $p(v)=-s_ip(i)$ we have 
\begin{equation*}
s_i^2p(i)^{\top}A^{-1}\dot{A}p(i)+s_i\dot{r}(\chi(v))p(i)^{\top}A^{-1}\mathbf{1}+s_ip(i)^{\top}A^{-1}{\bm t}+\dot{r}(\chi(v))=0.
\end{equation*}
Note that for each $i=1,\dots, d+1$ we have 
\begin{align*}
p(i)^{\top}A^{-1}\dot{A}p(i)&=\langle \dot{p}(i), p(i)\rangle=\dot{r}(\chi(i)), \\ 
p(i)^{\top}A^{-1}\mathbf{1}&=1 \mbox{ and } \\
p(i)^{\top}A^{-1}{\bm t}&=\dot{r}(\chi(i)).
\end{align*}
Hence we get $(s_i+1)(s_i\dot{r}(\chi(i))+\dot{r}(\chi(v)))=0.$ Thus 
\begin{equation}
\label{eqn:16}
s_i\dot{r}(\chi(i))+\dot{r}(\chi(v))=0.
\end{equation}

On the other hand by setting $s_{ij}=\frac{\|p(v)\|}{\|p(i)+p(j)\|}$ and $p(v)=-s_{ij}(p(i)+p(j))$ for any $i,j\in \{1,\dots, d+1\}$
we have 
\begin{equation}
2s_{ij}^2(\dot{r}(\chi(i))+\dot{r}(\chi(j)))+s_{ij}(2\dot{r}(\chi(v))+\dot{r}(\chi(i))+\dot{r}(\chi(j)))+\dot{r}(\chi(v))=0
\end{equation}
since
\begin{align*}
(p_i+p_j)A^{-1}\dot{A}(p(i)+p(j))&=\langle p(i), \dot{p}(i)\rangle+\langle p(i),\dot{p}(j)\rangle+\langle p(j),\dot{p}(j)\rangle+\langle p(j),\dot{p}(j)\rangle \\ &=2(\dot{r}(\chi(i))+\dot{r}(\chi(j))) \mbox{ and }\\ (p(i)+p(j))^{\top}A^{-1}(\dot{r}(\chi(v))\mathbf{1}+{\bm t})&=(2\dot{r}(\chi(v))+\dot{r}(\chi(i))+\dot{r}(\chi(j))). 
\end{align*}
Thus $(2s_{ij}+1)(s_{ij}(\dot{r}(\chi(i))+\dot{r}(\chi(j)))+\dot{r}(\chi(v)))=0$ 
and 
\begin{equation}
s_{ij}(\dot{r}(\chi(i))+\dot{r}(\chi(j)))+\dot{r}(\chi(v))=0.
\label{eqn:17}
\end{equation}

Combining Equations (\ref{eqn:16}) and (\ref{eqn:17})
we get 
\begin{equation*}
(\|p(i)\|+\|p(j)\|-\|p(i)+p(j)\|)\dot{r}(\chi(v))=0.
\end{equation*}
Since $p(i), p(j)$  and the origin are not collinear, we get $\dot{r}(\chi(v))=0$ 
and hence $\dot{r}(\chi(u))=0$ for any neighbor $u$ of $v$ by (\ref{eqn:16}).
\end{proof}

\section{Combinatorial characterisations}
\label{sec:comb}

In this section we prove our main theorems, combinatorial characterisations of generic infinitesimal rigidity for $d=1$ with arbitrary $k$ and for $d=2$ with $k\leq 2$.

\subsection{1-dimensional case}

If $d=1$ then $g_1(F)> 0$ for any nonempty $F$.
Hence by Lemma~\ref{lemma:sub} the set of all edge sets satisfying the condition in Lemma~\ref{lem:necessity}
form the bases of a matroid on the edge set of the complete graph on $V$, 
which is denoted by ${\cal M}_1(V,\chi)$. 
The coloured graph whose edge set is a base (resp., independent set) is 
called $g_1$-tight (resp., $g_1$-sparse).

Also recall that  $r_1(F)=n-\omega(F)$ for any $F\subseteq E$, where 
$\omega(F)$ denotes the number of connected components in $(V,F)$.

Our goal in this subsection is to show Theorem~\ref{thm:1d} below, 
which shows that the generic rigidity on $r\mathbb{S}^1$ is characterized by $g_1$-sparsity.
We need the following three lemmas for the proof.

\begin{lemma}
\label{lem:1-1}
A $g_1$-tight $k$-coloured graph $(G,\chi)$ contains a vertex of degree at most two or a colour-isolated vertex of degree three.
\end{lemma}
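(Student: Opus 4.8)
The plan is to do a counting argument on the degree sequence, exploiting the exact value $|E| = g_1(E)$ that holds for a $g_1$-tight graph together with the local sparsity bounds $|F| \le g_1(F)$.

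First I would recall that for $d=1$ the count is $|E| = g_1(E) = r_1(E) + h_1(E) = (n - \omega(E)) + \min\{k, n-2\}$. Since a $g_1$-tight graph is certainly connected (otherwise one could add an edge between components, and a short check shows this preserves $g_1$-sparsity, contradicting tightness — I would verify $r_1$ goes up by one while $h_1$ is nondecreasing), we have $\omega(E)=1$ and hence $|E| = n - 1 + \min\{k, n-2\}$. Suppose for contradiction that every vertex has degree at least $3$ and no colour-isolated vertex has degree $\le 3$; in particular every colour-isolated vertex has degree $\ge 4$. So $2|E| = \sum_{v} \deg(v) \ge 3n + (\text{number of colour-isolated vertices})$. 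Let $s$ be the number of colour-isolated vertices. Then $2(n-1+\min\{k,n-2\}) \ge 3n + s$, i.e. $\min\{k,n-2\} \ge \tfrac{n+2+s}{2}$.

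Next I would split on the two cases of the min. If $\min\{k,n-2\} = n-2$, the inequality reads $n-2 \ge \tfrac{n+2+s}{2}$, i.e. $n \ge 6 + s$; this is not yet a contradiction, so the plain global count is not enough and I would need to bring in a local sparsity constraint. The natural move is to look at a proper subgraph that is forced to be overcounted: the colours partition the vertex set, and a colour class $\chi^{-1}(c)$ of size $\ge 2$ contributes only one "unit'' of slack, so restricting to the edge set $F$ spanned by vertices of the non-isolated colours (or iterating, peeling off colour classes) should push $|F|$ above $g_1(F) = r_1(F) + \min\{k(F), |V(F)|-2\}$. Concretely, if $k < n-2$ one uses $h_1(E) = k$, and the deficiency between $2|E|$ and $3n$ must be "paid for'' by the $n-1-k$ vertices that lie in colour classes beyond the first representative of each colour plus the uncoloured vertices; a careful bookkeeping of which vertices can have degree exactly $3$ versus must have degree $\ge 4$ (colour-isolated ones) against this budget yields the contradiction. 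The case $k \ge n-2$ is handled similarly with $h_1 = n-2$ and the global count $|E| = 2n-3$, where $2|E| = 4n - 6 \ge 3n + s$ forces $n \ge 6 + s$, and again one restricts to a subgraph to finish.

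I expect the main obstacle to be the case analysis on how the slack in the global count is distributed among the colour classes, in particular handling colour-isolated vertices (which behave like $\mathbb{R}^2$-vertices, needing degree $\ge 4$ to be "generic-rigid-irreducible'') uniformly with non-colour-isolated and uncoloured vertices (which behave like $\mathbb{R}^1$-vertices on a fixed sphere, needing degree $\ge 3$). The clean way to organise this is probably to assign to each vertex $v$ a "charge'' equal to its contribution to $g_1$: a non-colour-isolated or uncoloured vertex contributes $1$ to $r_1$ (via connectivity) and $0$ or $1$ to $h_1$, while a colour-isolated vertex contributes $1$ to $r_1$ and $1$ to $h_1$; then compare $\sum_v \deg(v) = 2|E| = 2\sum_v(\text{charge of }v)$ with the degree lower bounds, and localise by restricting to the subgraph induced on a maximal subset whose colours are all non-isolated whenever the global inequality fails to bite. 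Once the right subgraph is identified, the contradiction with $|F| \le g_1(F)$ is immediate.
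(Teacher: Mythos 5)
Your setup is right — assume every vertex has degree at least $3$ and every colour-isolated vertex has degree at least $4$, let $s$ be the number of colour-isolated vertices, and derive $2|E|\ge 3n+s$ against the global count $|E|=n-1+\min\{k,n-2\}$ — but the proof is not closed, and the place where you give up is exactly where the lemma lives. You assert that ``the plain global count is not enough'' and defer to an unexecuted ``careful bookkeeping'' of local sparsity constraints on restricted subgraphs. That deferral is the gap: you never exhibit the subgraph $F$ violating $|F|\le g_1(F)$, and in fact no such local argument is needed. The single missing ingredient is the trivial count of vertices by colour classes: the $k-s$ non-isolated colours each contain at least two vertices, so $n\ge s+2(k-s)$, i.e.\ $s\ge 2k-n$. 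With this, both of your cases close immediately from the inequalities you already have. If $k< n-2$, then $2(n-1+k)\ge 3n+s$ gives $s\le 2k-n-2$, contradicting $s\ge 2k-n$. If $k\ge n-2$, then $2(n-1)+2(n-2)\ge 3n+s$ gives $s\le n-6$, while $s\ge 2k-n\ge n-4$, again a contradiction. (Disconnectedness only shrinks $|E|$ further, so you need not even settle the connectivity claim you flag.)

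This is essentially the paper's argument: the paper assumes minimum degree $3$, bounds the number $n_3$ of degree-$3$ vertices from below via the edge count, and then uses precisely the colour-class inequality $k-k_1\le (n-k_1)/2$ (their $k_1$ is your $s$) to reach $k\le k-1$. So your approach is the same global double count, but as written it stops one step short of a proof; the detour through $X$-restricted subgraphs and $|F|\le g_1(F)$ is a red herring that you would not be able to complete any more easily than the direct count, and should be replaced by the observation $s\ge 2k-n$.
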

\begin{proof}
Let $n_i$ be the number of vertices of degree $i$ and let $k_1$ be the number of colour-isolated vertices.
Suppose every vertex has degree at least three. We shall show that there is a colour-isolated vertex of degree three. 
The statement easily follows if $k\geq |V|-2$ since in this case at least $|V|-4$ vertices are colour-isolated.
So assume $k< |V|-2$.
Then $2|E|=2n-2+2k\geq 3n_3+4(n-n_3)$.
Hence $n_3-2+2k\geq 2n$.

Suppose all vertices of degree three are not colour-isolated.
Then $k_1\leq n-n_3$.
Since $k-k_1\leq \frac{n-k_1}{2}$, we get 
\begin{equation*}
k=(k-k_1)+k_1\leq \frac{n-k_1}{2}+k_1
=\frac{n+k_1}{2}
\leq \frac{2n-n_3}{2}
\leq n_3-(n_3-k+1)=k-1,
\end{equation*}
a contradiction.
\end{proof}

For a colour $c\in {\cal C}$ 
we say that $F\subseteq E$ is {\em $(c,d)$-tight} (or, simply $c$-tight if $d$ is clear) if $|F|=r_d(F)+k(F)$ with $c\in \chi(V(F))$. 
\begin{lemma}
\label{lem:1-2}
Suppose $(G,\chi)$ is $g_1$-sparse.
Then for each colour $c$ either there is no $(c,1)$-tight set or 
$G$ contains  a unique inclusionwise minimal $(c,1)$-tight set.
\end{lemma}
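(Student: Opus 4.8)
The plan is to establish the uniqueness of the inclusionwise-minimal $(c,1)$-tight set by the standard submodularity/intersection argument: if $F_1$ and $F_2$ are both $(c,1)$-tight, I want to show that $F_1\cap F_2$ is again $(c,1)$-tight, which immediately forces any two minimal such sets to coincide (their intersection would be a smaller tight set). First I would record that, since $(G,\chi)$ is $g_1$-sparse, we have $|F|\le g_1(F)=r_1(F)+h_1(F)$ for every $F\subseteq E$, and a set $F$ is $(c,1)$-tight precisely when $c\in\chi(V(F))$ and $|F|=r_1(F)+k(F)$; note that because $c$ is present, $k(F)\ge 1>0$, so in the regime where $h_1(F)=|V(F)|-2$ could be smaller than $k(F)$ the set would be $g_1$-slack, hence tightness in fact forces $k(F)\le |V(F)|-2$ and $h_1(F)=k(F)$. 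So on tight sets the count function behaves like $r_1+k$, and I only need submodularity of $r_1$ (the rigidity matroid rank, which is submodular) together with the submodularity inequality~(\ref{eq:k}) for $k$.

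The main computation is then: for tight $F_1,F_2$,
\begin{align*}
|F_1|+|F_2| &= \bigl(r_1(F_1)+k(F_1)\bigr)+\bigl(r_1(F_2)+k(F_2)\bigr)\\
&\ge \bigl(r_1(F_1\cup F_2)+r_1(F_1\cap F_2)\bigr)+\bigl(k(F_1\cup F_2)+k(F_1\cap F_2)\bigr)\\
&\ge |F_1\cup F_2| + \bigl(r_1(F_1\cap F_2)+k(F_1\cap F_2)\bigr),
\end{align*}
where the last step uses $g_1$-sparsity on $F_1\cup F_2$ together with $h_1(F_1\cup F_2)\le k(F_1\cup F_2)$. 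Combining with $|F_1|+|F_2|=|F_1\cup F_2|+|F_1\cap F_2|$ gives $|F_1\cap F_2|\ge r_1(F_1\cap F_2)+k(F_1\cap F_2)$, and sparsity gives the reverse inequality, so $F_1\cap F_2$ is $(c,1)$-tight — provided $F_1\cap F_2$ is nonempty and $c\in\chi(V(F_1\cap F_2))$.

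The one genuine obstacle is verifying that $c\in\chi(V(F_1\cap F_2))$, i.e. that the colour $c$ survives into the intersection; a priori $F_1$ and $F_2$ could witness colour $c$ on disjoint vertex sets, and then $F_1\cap F_2$ need not see $c$ at all (indeed could be empty). To handle this I would argue that a $(c,1)$-tight set $F$ must actually contain \emph{every} vertex of colour $c$ that lies in $V(F)$, and more: using $r_1(F)=|V(F)|-\omega(F)$ for $d=1$, tightness reads $|F| = |V(F)|-\omega(F)+k(F)$, so each connected component of $(V(F),F)$ carries on average ``one extra edge per colour''; a short counting argument shows each component of a tight set is itself tight and spans a nonempty set of colours, and that distinct minimal tight sets sharing colour $c$ must overlap in a component containing a $c$-coloured vertex. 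Concretely I would first reduce to the case where $F_1$ and $F_2$ are each \emph{connected} (pass to the components witnessing $c$), so that $\omega=1$ and the tightness identity is $|F_i|=|V(F_i)|-1+k(F_i)$; then if $V(F_1)\cap V(F_2)$ contained no $c$-vertex one can still run the submodular count on the union to derive a contradiction with $g_1$-sparsity of $F_1\cup F_2$ (the union would be over-counted). This reduction-to-connected-components step, and the careful bookkeeping that the colour $c$ is not lost, is where the real work lies; the rest is the routine submodular cross-inequality above.
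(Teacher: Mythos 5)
Your overall strategy --- the submodular cross-count on $F_1\cup F_2$ and $F_1\cap F_2$ using $r_1+k$, together with the observation that $c$-tightness forces $k(F)\le|V(F)|-2$ so that $g_1$ behaves like $r_1+k$ on tight sets --- is exactly the paper's, and your main computation is correct. But you have correctly located the crux (showing $c\in\chi(V(F_1\cap F_2))$, where $V(F_1\cap F_2)$ is the vertex set \emph{spanned by the edge set} $F_1\cap F_2$ and may be much smaller than $V(F_1)\cap V(F_2)$) and then you do not resolve it: you defer it to a ``reduction to connected components'' and ``careful bookkeeping'' that you explicitly flag as the real work and leave unexecuted. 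Moreover, the case split you propose is on the wrong condition: you distinguish whether $V(F_1)\cap V(F_2)$ contains a $c$-vertex, but a $c$-vertex $u$ can lie in $V(F_1)\cap V(F_2)$ (being incident to an edge of $F_1$ and to a different edge of $F_2$) while no edge of $F_1\cap F_2$ touches it, so $c\notin\chi(V(F_1\cap F_2))$; your dichotomy misses this case entirely. As written, the proof has a genuine gap.

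The paper closes this gap with no connectivity argument at all, using the \emph{exact} identity behind (\ref{eq:k}): $k(F_1)+k(F_2)=k(F_1\cup F_2)+\#\{\text{colours common to }V(F_1)\text{ and }V(F_2)\}$, and the common-colour count exceeds $k(F_1\cap F_2)$ by precisely the number of common colours not represented in $V(F_1\cap F_2)$. If $c$ is such a colour, then $k(F_1)+k(F_2)\ge k(F_1\cup F_2)+k(F_1\cap F_2)+1$, and feeding this extra $+1$ into your own chain gives $|F_1\cup F_2|+|F_1\cap F_2|\ge |F_1\cup F_2|+r_1(F_1\cap F_2)+k(F_1\cap F_2)+1$, hence $|F_1\cap F_2|>r_1(F_1\cap F_2)+k(F_1\cap F_2)\ge g_1(F_1\cap F_2)$ when $F_1\cap F_2\neq\emptyset$ (and $|F_1\cup F_2|\ge g_1(F_1\cup F_2)+1$ when it is empty), contradicting $g_1$-sparsity. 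Otherwise equality holds throughout, forcing $\chi(V(F_1\cap F_2))=\chi(V(F_1))\cap\chi(V(F_2))\ni c$, so $F_1\cap F_2$ is a $(c,1)$-tight set properly contained in $F_1$, contradicting minimality. (The paper splits off $F_1\cap F_2=\emptyset$ separately because $g_1(\emptyset)=-2$; your use of $r_1+k$ rather than $g_1$ on the intersection sidesteps that, which is fine.) If you replace your component-reduction sketch with this one-line exploitation of (\ref{eq:k}), your argument becomes complete and coincides with the paper's.
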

\begin{proof}
Suppose there are two distinct inclusionwise minimal $c$-tight sets $F_1$ and $F_2$.

If $F_1\cap F_2=\emptyset$, then (\ref{eq:k}) implies
$k(F_1)+k(F_2)\geq k(F_1\cup F_2)+1$ since $c$ is a common colour.
Hence by the submodularity of $r_1$ we get
\begin{align*}
|F_1\cup F_2|&=|F_1|+|F_2|=r_1(F_1)+k(F_1)+r_1(F_2)+k(F_2)\\
&\geq r_1(F_1\cup F_2)+k(F_1)+k(F_2)\\
&\geq r_1(F_1\cup F_2)+k(F_1\cup F_2)+1\\
&\geq g_1(F_1\cup F_2)+1,
\end{align*}
contradicting the $g_1$-sparsity condition.

If $F_1\cap F_2\neq \emptyset$,
by the submodularity of $g_1$ 
we have 
\begin{align*}|F_1\cup F_2|+|F_1\cap F_2|&=|F_1|+|F_2| \geq
g_1(F_1)+g_1(F_2)\\ &\geq g_1(F_1\cup F_2)+g_1(F_1\cap F_2)\\
&\geq |F_1\cup F_2|+|F_1\cap F_2|.
\end{align*}
Hence the equality holds everywhere.
In particular, since $r_d$ and $h_d$ are submodular and $g_1=r_1+h_1$, we have 
\begin{align}
\label{eq:lem1-1-1}
|F_1\cap F_2|&=g_1(F_1\cap F_2) \mbox{ and } \\ \label{eq:lem1-1-2}
k(F_1)+k(F_2)&=\min\{k(F_1\cup F_2), |V(F_1\cup F_2)|-2\}+\min\{k(F_1\cap F_2), |V(F_1\cap F_2)|-2\}.
\end{align}
On the other hand, by (\ref{eq:k}), 
\begin{align*}
k(F_1)+k(F_2)&\geq k(F_1\cup F_2)+k(F_1\cap F_2) \\
&\geq \min\{k(F_1\cup F_2), |V(F_1\cup F_2)|-2\}+\min\{k(F_1\cap F_2), |V(F_1\cap F_2)|-2\}. 
\end{align*}
Thus (\ref{eq:lem1-1-2}) implies that the equality holds everywhere, and in particular
\begin{align}
\label{eq:lem1-1-3}
k(F_1)+k(F_2)&= k(F_1\cup F_2)+k(F_1\cap F_2) \\ \label{eq:lem1-1-4}
k(F_1\cap F_2)&\leq |V(F_1\cap F_2)|-2.
\end{align}
%
By (\ref{eq:lem1-1-3}) and (\ref{eq:k}) we have $\chi(V(F_1\cap F_2))=\chi(V(F_1)\cap V(F_2))$; 
in particular, $c\in \chi( V(F_1\cap F_2))$.
Hence (\ref{eq:lem1-1-1}) and (\ref{eq:lem1-1-4}) imply that 
$F_1\cap F_2$ is $c$-tight.
This contradicts the minimality of $F_1$.
%
\end{proof}

\begin{lemma}
\label{lem:G'}
Let $(G,\chi)$ be a $g_1$-tight $k$-coloured graph and let $c$ be a colour.
Let $e$ be an edge in the inclusionwise minimal $(c,1)$-tight set if one exists and otherwise let $e$ be any edge.
Also let 
\begin{equation*}
G'=\begin{cases}
G-e & \text{if $k\leq |V|-2$,} \\
G & \text{otherwise,}
\end{cases}
\end{equation*}
and let $\chi'$ be the colouring obtained from $\chi$ by uncolouring vertices in $\chi^{-1}(c)$.
Then $(G',\chi')$ is $g_1$-tight.
\end{lemma}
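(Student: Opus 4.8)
The plan is to verify the two defining properties of $g_1$-tightness for $(G',\chi')$ separately: first that $(G',\chi')$ is $g_1$-sparse, i.e.\ $|F|\le g_1'(F)$ for every nonempty $F\subseteq E(G')$, and second that equality holds for $F=E(G')$. Here I write $g_1'=r_1+h_1'$ where $h_1'(F)=\min\{k'(F),|V(F)|-2\}$ and $k'$ is the colour-count under $\chi'$; note $k'(F)=k(F)$ if $c\notin\chi(V(F))$ and $k'(F)=k(F)-1$ if $c\in\chi(V(F))$. I would split into the two cases of the definition of $G'$.

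**Case $k\le |V|-2$ (so $G'=G-e$).** For the count condition: since $(G,\chi)$ is $g_1$-tight we know $|E(G)|=g_1(E(G))=r_1(E(G))+k$, and here $h_1(E(G))=k$ because $k\le|V|-2$ and $G$ is connected (otherwise $E(G)$ couldn't be $g_1$-tight when... actually I should check $r_1(E(G))=|V|-1$, i.e.\ $G$ connected, which follows from tightness; I'd spell this out). Removing $e$ drops $|E|$ by one and drops $r_1$ by at most one; I want to show $r_1(E(G)-e)=r_1(E(G))-1$, i.e.\ $e$ is not a bridge-redundant edge, which is exactly why $e$ is chosen inside the minimal $(c,1)$-tight set (if it exists) — inside a $c$-tight set $e$ lies in a subgraph with a cycle in the generic $1$-rigidity matroid sense, so $e$ is not a coloop there; if no $c$-tight set exists, any edge works because... hmm, here I must be more careful and this is the delicate point (see below). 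Granting $r_1$ drops by exactly one, $h_1'(E(G'))=h_1'(E(G)-e)$: if $c$ still appears on $V(G-e)$ then $k'=k-1$ and $h_1'=\min\{k-1,|V'|-2\}$; combined with the edge count this should give equality $|E(G')|=g_1'(E(G'))$. For sparsity of subsets $F\subseteq E(G)-e$: since $F$ is already a subset of $E(G)$ it satisfies $|F|\le g_1(F)$; I need $|F|\le g_1'(F)$, and $g_1'(F)=g_1(F)$ unless $c\in\chi(V(F))$, in which case $g_1'(F)=g_1(F)-1$ (provided $k(F)\le|V(F)|-2$). So the only danger is a set $F\subseteq E(G)-e$ with $c\in\chi(V(F))$, $k(F)\le|V(F)|-2$, and $|F|=g_1(F)$ — but such an $F$ would be a $(c,1)$-tight set not containing $e$, contradicting that $e$ lies in the \emph{minimal} such set and Lemma~\ref{lem:1-2} (uniqueness of the minimal $(c,1)$-tight set: any $c$-tight set contains the minimal one, hence contains $e$). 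If $k(F)>|V(F)|-2$ then $h_1'(F)=h_1(F)=|V(F)|-2$ and no drop occurs. This is the heart of the argument and it is exactly where Lemma~\ref{lem:1-2} is used.

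**Case $k>|V|-2$ (so $G'=G$, only recolouring).** Now $h_1(E(G))=|V|-2$, so $(G,\chi)$ tight means $|E|=r_1(E)+|V|-2=(|V|-1)+(|V|-2)$, giving $G$ connected and $|E|=2|V|-3$. After uncolouring $\chi^{-1}(c)$ the new colour count on all of $V$ is $k-1$; I need $k-1\ge |V|-2$ or handle the boundary. Since $k\ge|V|-1$ by hypothesis of this case, $k-1\ge|V|-2$, so $h_1'(E)=\min\{k-1,|V|-2\}=|V|-2=h_1(E)$, hence $g_1'(E)=g_1(E)=|E|$: the tight count is preserved. For subset sparsity: for $F\subseteq E$, if $c\notin\chi(V(F))$ nothing changes; if $c\in\chi(V(F))$, then $h_1'(F)=\min\{k(F)-1,|V(F)|-2\}$, which equals $h_1(F)$ unless $k(F)\le|V(F)|-2$, in which case it drops by one. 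Again I'd invoke Lemma~\ref{lem:1-2}: if such an $F$ with $|F|=g_1(F)$ existed it would be $(c,1)$-tight; but now $G'=G$ and we did not choose any $e$ to remove, so we must argue differently — actually when $k>|V|-2$ there may still be $(c,1)$-tight subsets $F$ with $k(F)\le|V(F)|-2$, and for those $|F|=g_1(F)=r_1(F)+k(F)$ would become $|F|=g_1'(F)+1$, violating sparsity. So I need to rule these out: I claim when $k>|V|-2$, every $(c,1)$-tight set $F$ actually has $k(F)>|V(F)|-2$ or is all of the relevant structure — hmm, this needs thought; perhaps the right statement is that $h_1(F)=k(F)$ forces $F$ small and then tightness $|F|=r_1(F)+k(F)$ with $k(F)\le |V(F)|-2$ contradicts $g_1$-sparsity of $(G,\chi)$ combined with the global count. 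I would work this out carefully — this boundary bookkeeping is the second place where care is needed.

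**Main obstacle.** The routine parts are the monotonicity/submodularity bookkeeping; the genuinely delicate step is showing that removing $e$ (chosen inside the minimal $(c,1)$-tight set) or uncolouring drops $g_1$ by exactly one on \emph{every} subset it should and by no subset it shouldn't — i.e.\ controlling which subsets $F$ have $c\in\chi(V(F))$ and $k(F)\le|V(F)|-2$ and are $r_1+k$-tight. The key leverage is Lemma~\ref{lem:1-2}: the minimal $(c,1)$-tight set is unique, every $(c,1)$-tight set contains it, and $e$ lies in it, so no $(c,1)$-tight set avoids $e$; this simultaneously gives the tight-count preservation and the sparsity of all proper subsets. I expect the case analysis $k\le|V|-2$ versus $k>|V|-2$ to each require its own handling of the $h_1$ formula at the threshold, and that is where I'd spend the most care.
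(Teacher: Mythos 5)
Your identification of the key mechanism in the case $k\le|V|-2$ is correct and matches the paper: a subset $F\subseteq E-e$ violating $g_1'$-sparsity would have to satisfy $|F|=g_1(F)$, $c\in\chi(V(F))$ and $k(F)\le|V(F)|-2$, hence be a $(c,1)$-tight set avoiding $e$, which Lemma~\ref{lem:1-2} together with the choice of $e$ forbids. However, your treatment of the tightness count $|E(G')|=g_1'(E(G'))$ in that case is wrong in direction. You set out to prove $r_1(E-e)=r_1(E)-1$, but if $r_1$ dropped by one \emph{and} $h_1'$ dropped from $k$ to $k-1$ (which it does, since $k-1<|V|-2$), then $g_1'(E-e)=|E|-2<|E-e|$, contradicting the sparsity you just proved; what is actually needed is $r_1(E-e)=r_1(E)$, i.e.\ that $e$ is \emph{not} a bridge. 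Moreover neither claim needs to be proved directly: by monotonicity of $g_1'$ (Lemma~\ref{lemma:sub}) one has $g_1'(E-e)\le g_1'(E)=r_1(E)+(k-1)=g_1(E)-1=|E|-1=|E-e|$, so sparsity alone forces equality. This is exactly why the paper reduces the whole case to ``it suffices to show $G'$ is $g_1$-sparse,'' and it also disposes of your unresolved subcase in which no $(c,1)$-tight set exists (then no subset's $g_1$-value drops below its cardinality, and any edge $e$ works).

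The second genuine gap is the case $k\ge|V|-1$, which you explicitly leave open (``this needs thought''). The observation you are missing is that $k\ge|V|-1$ forces $k(F)\ge|V(F)|-1$ for \emph{every} nonempty $F\subseteq E$: at most one vertex of $V$ fails to carry its own private colour, so any vertex subset loses at most one colour. Consequently $h_1(F)=|V(F)|-2$ identically, and after uncolouring $\chi^{-1}(c)$ one still has $k'(F)\ge|V(F)|-2$, so $h_1'(F)=|V(F)|-2$ as well. The colouring therefore plays no role whatsoever in the count condition before or after the operation, and there is nothing to rule out: the hypothetical $(c,1)$-tight sets with $k(F)\le|V(F)|-2$ that worry you simply cannot exist in this regime. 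As written, your proposal has the right strategy for the sparsity half of the first case but does not constitute a complete proof of the lemma.
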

\begin{proof}
Suppose that $k\geq |V|-1$.
Then $k(F)\geq |V(F)|-1$ for any nonempty $F\subseteq E$.
Hence after uncolouring vertices in $\chi^{-1}(c)$ we get $k(F)\geq |V(F)|-2$ for any nonempty $F\subseteq E$. This means that the colouring does not play any role in the count condition.
In other words $(G,\chi)$ is $g_1$-sparse if and only if $(G',\chi')$ is $g_1$-sparse.

We hence consider the case when  $k\leq |V|-2$.
Then it suffices to show that $G'$ is $g_1$-sparse.
Note that, when uncolouring $\chi^{-1}(c)$, $g_1(F)$ decreases by one for $F$ with $c\in \chi(V(F))$
and $g_1(F)$ remains unchanged for other $F$.
Therefore, if $G'$ is not $g_1$-sparse, $G$ should contain a $(c,1)$-tight set $F$ with $e\notin F$.
However the existence of such an $F$ contradicts Lemma~\ref{lem:1-2}.
\end{proof}

\begin{theorem}
\label{thm:1d}
Let $(G,\chi)$ be a $k$-coloured graph
and $(G,\chi, p)$ be a generic framework on $r\mathbb{S}^1$.
Then $(G,\chi, p)$ is isostatic if and only if $(G,\chi)$ is $g_1$-tight, i.e., 
\begin{itemize}
\item $|E|=|V|-1+\min\{k,|V|-2\}$, and 
\item $|F|\leq |V|-\omega(F)+\min\{k(F),|V(F)|-2\}$ for any nonempty $F\subseteq E$.
\end{itemize}
\end{theorem}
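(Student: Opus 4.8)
The plan is to prove the theorem by induction on the pair $(|V|,k)$ ordered lexicographically (or on $|V|+k$), using the reduction operations developed in Section~\ref{sec:prelim}. The necessity direction is exactly Lemma~\ref{lem:necessity}, so the content is the converse: assume $(G,\chi)$ is $g_1$-tight and produce a generic isostatic realisation. Since $g_1$ induces the matroid ${\cal M}_1(V,\chi)$, a $g_1$-tight graph has $|E| = g_1(E)$ and is $g_1$-sparse, and it suffices to build the realisation by reversing a sequence of reductions down to a trivially isostatic base case (e.g.\ a single edge, or a framework on $\le 3$ vertices).

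First I would treat the two regimes $k \ge |V|-1$ and $k \le |V|-2$ separately, since the count function and the reduction operations behave differently. In the regime $k \le |V|-2$: by Lemma~\ref{lem:1-1}, $(G,\chi)$ has a vertex $v$ of degree $\le 2$ or a colour-isolated vertex of degree $3$. If $v$ has degree $\le 2$ one performs a $0$-reduction (degree $2$ non-colour-isolated/uncoloured, or degree $1$); if $v$ is colour-isolated of degree $3$ one performs a $0$-reduction as well (colour-isolated of degree $d+1 = 3$). A routine count check shows the reduced graph $(G-v, \chi\!\restriction)$ remains $g_1$-tight (the deletion of $v$ drops $g_1$ by exactly $\deg v$), so by induction on $|V|$ it has a generic isostatic realisation, and Lemma~\ref{lem:0extension} lifts it back. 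If instead \emph{every} degree-$3$ vertex reduction would be obstructed — e.g.\ the only low-degree vertices have degree exactly $3$ and are not colour-isolated, or degree-$2$ vertices are absent — one may need a $1$-extension argument; here Lemma~\ref{lem:1extension1} handles the colour-isolated and all-uncoloured cases, and Lemma~\ref{lem:1extension2} supplies the non-colour-isolated case (after checking the one-degree-of-freedom hypotheses via the sparsity count on $G-v$). One must verify that after the edge addition of the $1$-reduction the resulting graph is still $g_1$-sparse; this is where Lemma~\ref{lem:1-2} (uniqueness of the minimal $(c,1)$-tight set) is used to choose the replacement edge avoiding all $c$-tight sets.

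For the regime $k \ge |V|-1$, I would instead induct on the number of colours $k$ using Lemma~\ref{lem:G'}: pick a colour $c$, take the minimal $(c,1)$-tight set (or any edge if none exists), form $(G',\chi')$ by uncolouring $\chi^{-1}(c)$ and (in the $k\le|V|-2$ subcase) deleting the chosen edge $e$, and apply Lemma~\ref{lem:G'} to conclude $(G',\chi')$ is $g_1$-tight with fewer colours. By induction $(G',\chi')$ has a generic isostatic realisation; one then needs to argue that re-colouring $\chi^{-1}(c)$ (and re-adding $e$ when applicable) yields an isostatic framework — i.e.\ that introducing one new column in $V(G,\chi,p)$ together with the coupled structure of $S(G,p)$ increases the rank by exactly one (or, when $k \ge |V|-1$, that the recolouring changes nothing because the count is already saturated, matching the rank bound $\min\{k,|V|-2\}$ in Proposition~\ref{prop:trivial}). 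This amounts to a generic non-degeneracy statement about the extra column and should follow from an algebraic independence / determinant argument on the enlarged rigidity matrix.

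The main obstacle I anticipate is the non-colour-isolated $1$-extension step: Lemma~\ref{lem:1extension2} only guarantees a \emph{nontrivial} infinitesimal motion of $(G,\chi,p)$ vanishing on the relevant radii, not directly that a generic realisation of the $1$-extension is isostatic, so I would need to combine it with a rank/degree-of-freedom bookkeeping argument (the added vertex $v$ has degree $d+1 = 2$ plus the split edge, contributing $3$ new rows and $3$ new columns to the rigidity matrix) and carefully invoke genericity to rule out the split-off motion, closely following the "collinear triangle / limit" strategy of~\cite{nop2}. A secondary subtlety is ensuring at each reduction step that the \emph{connectivity} term $\omega(F)$ and the colour term $k(F)$ interact correctly — in particular that a $0$-reduction removing a degree-$2$ vertex does not disconnect the graph in a way that violates $g_1$-tightness — which again relies on the submodularity machinery of Lemma~\ref{lemma:sub} and the structure of minimal tight sets from Lemmas~\ref{lem:1-2} and~\ref{lem:G'}.
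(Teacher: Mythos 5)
Your skeleton (induction on $|V|+k$, necessity from Lemma~\ref{lem:necessity}, low-degree vertices from Lemma~\ref{lem:1-1}, lifting via Lemmas~\ref{lem:0extension} and~\ref{lem:1extension1}, and the matroid-closure argument for admissible $1$-reductions) matches the paper, but there are two genuine problems. First, your degree thresholds are off for $d=1$: when $k<|V|-2$ a $0$-reduction removes a non-colour-isolated or uncoloured vertex of degree $d=1$, or a colour-isolated vertex of degree $d+1=2$. You propose $0$-reducing a degree-$2$ non-colour-isolated vertex and a colour-isolated vertex of degree $3$ (you even write ``$d+1=3$''); in both cases $g_1$ drops by less than the degree, so the reduced graph is not $g_1$-tight and the induction breaks. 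The colour-isolated degree-$3$ vertex is a $1$-reduction, and the non-colour-isolated degree-$2$ vertex with a coloured closed neighbourhood is precisely the hard case.

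Second, and more importantly, your plan for that hard case --- upgrade Lemma~\ref{lem:1extension2} to a statement that the $1$-extension preserves isostaticity by ``rank bookkeeping'' and the collinear-triangle/limit strategy --- is exactly the route the paper says cannot be made to work for general coloured graphs (it only works for $k=1$). The paper's actual resolution is a contradiction argument you do not have: assume the generic $(G,\chi,p)$ is flexible; then both $(G,\chi,p)$ and $(G-v,\chi,p)$ have one degree of freedom, so Lemma~\ref{lem:1extension2} yields a nontrivial motion with $\dot{r}(\chi(u))=0$, meaning the framework stays flexible even after \emph{freezing} the radius of colour $\chi(u)$, i.e.\ after uncolouring $\chi^{-1}(\chi(u))$. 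But Lemma~\ref{lem:G'} shows that after uncolouring (and deleting one edge of the minimal $(\chi(u),1)$-tight set, whose uniqueness is Lemma~\ref{lem:1-2}) the graph is still $g_1$-tight, hence rigid by the induction on $k$; since it is a spanning subgraph, $(G,\chi',p)$ is rigid --- contradiction. Note also that Lemma~\ref{lem:G'} is used only in this one direction; your proposed ``recolour and show the rank goes up by one'' step in the $k\geq|V|-1$ regime is unnecessary (there a degree-$2$ vertex is directly $0$-reducible) and is not justified by anything in the paper.
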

\begin{proof}
By Lemma~\ref{lem:necessity}, it suffices to show that,
if $G$ is $g_1$-tight,
there is an isostatic framework $(G,\chi,p)$ on $r\mathbb{S}^1$.

The proof is done by induction on $|V|+k$.

If there is a vertex of degree one or a colour-isolated vertex of degree two, then 
the statement follows by Lemma~\ref{lem:0extension}.
Hence we may assume that $G$ contains a non-colour-isolated vertex of degree two or 
a colour-isolated vertex of degree three by Lemma~\ref{lem:1-1}.

Suppose there is a colour-isolated vertex $v$ of degree three.
\begin{claim}
\label{claim:1}
1-reduction is possible at $v$, i.e., there are two neighbors $i, j\in N_G(v)$
such that $G-v+ij$ is $g_1$-sparse.
\end{claim}
\begin{proof}[Proof of Claim]
We use the proof technique given in \cite{jkt} based on the fact that the count condition induces a matroid.
Specifically, let $N_G(v)=\{i,j,k\}$ and assume that none of $G-v+ij, G-v+jk, G-v+ki$ is $g_1$-sparse.
Then $\{ij, jk, ki\}\subset {\rm cl}(E(G-v))$ where ${\rm cl}$ denotes the closure of ${\cal M}_1(V,\chi)$. 
Since $\{ij, jk, ki, vi, vj, vk\}$  is dependent in ${\cal M}_1(V,\chi)$, 
$vi\in {\rm cl}(\{ij, jk, ki,  vj, vk\})$. 
Hence $vi\in {\rm cl}(\{ij, jk, ki,  vj, vk\}\cup E(G-v))={\rm cl}(E(G-v)\cup \{vj, vk\})$.
Since $vi, vj, vk$ are in $G$, $E(G)$ is dependent, contradicting the $g_1$-sparsity of $G$.
\end{proof}
By induction $(G-v+ij,\chi|_{V-v})$ can be realized as an isostatic framework on $r\mathbb{S}^1$, and hence $(G,\chi)$ can also, by Lemma~\ref{lem:1extension1}.

Thus $G$ should contain a non-colour-isolated vertex $v$ of degree two.
Suppose all vertices of $N_G(v)\cup \{v\}$ are uncoloured.
Then one can again, using the argument in the claim, that
1-reduction is possible at $v$. (Note that the triangle on uncoloured three vertices is dependent.)
 This means that $G$ can be realized as an isostatic  framework 
by induction and Lemma~\ref{lem:1extension1}.

We hence assume that a vertex $u\in N_G(v)\cup \{v\}$ is coloured.
If $k\geq |V|-2$, then the claim follows from Lemma~\ref{lem:0extension} and induction.
Hence, we further assume $k<|V|-2$.
Let $(G,\chi,p)$ be a generic realization, and suppose that $(G,\chi,p)$ is not rigid.
Then by induction $(G-v,\chi, p)$ has one-degree of freedom,
and by Lemma~\ref{lem:0extension} $(G,\chi,p)$ is also a one-degree of freedom framework.
Hence by Lemma~\ref{lem:1extension2} 
there is a nontrivial infinitesimal motion $(\dot{p},\dot{r})$ of $(G,\chi,p)$ such that 
$\dot{r}(\chi(u))=0$.
This means that, even if the radius of the sphere associated with colour $\chi(u)$ is fixed, 
$(G,\chi,p)$ is still flexible.
Since this occurs for a generic choice of $r(\chi(u))$,
it also holds even for $r(\chi(u))=1$.
(Formally this follows from the symbolic rank of the matrix obtained from $R(G,\chi,p)$ by removing the column of $\chi(u)$.) 
In other words, considering a coloured graph $(G,\chi')$ with the $(k-1)$-colouring $\chi'$ obtained from $\chi$ by uncolouring all vertices in $\chi^{-1}(\chi(u))$, 
a generic framework $(G,\chi',p')$ on $r'\mathbb{S}^1$ is not rigid,
where $r'$ is the restriction of $r$ to ${\cal C}\setminus \{\chi(u)\}$.

We set $(G',\chi')$ as in the statement of Lemma~\ref{lem:G'}. 
Then $(G',\chi')$ is $g_1$-tight, and by induction $(G',\chi',p')$ is rigid on $r'\mathbb{S}^1$.
However, since $G'$ is a spanning subgraph of $G$, $(G,\chi',p')$ is rigid, a contradiction.
Thus we conclude that $(G,\chi,p)$ is rigid.
\end{proof}

One can easily check that, if each vertex has a distinct colour, then 
$|F|\leq |V|-\omega(F)+\min\{k(F),|V(F)|-2\}$ for any nonempty $F\subseteq E$ if and only if 
$|F|\leq 2|V(F)|-3$ for any nonempty $F\subseteq E$.
Therefore Theorem~\ref{thm:1d} is an interpolation theorem between the 1-dimensional characterisation of generic rigidity and Laman's theorem.

\subsection{Higher dimensional cases} \label{sec:higherdim}

If $d\geq 2$ then $g_d(F)\leq 0$ for $F$ with $|F|=1$.
So we shall consider the matroid induced by $f_d:=r_d+k$ instead of $g_d$.
Let ${\cal N}_d(V,\chi)$ be the matroid induced by $f_d$ on the edge set of the complete graph on $V$ with colouring $\chi$,
(where $E$ is independent iff $|F|\leq r_d(F)+k(F)$ for any $F\subseteq E$).
\begin{lemma}
\label{lem:spanning}
If $G=(V,E)$ is $f_d$-tight, then $G$ is rigid in $\mathbb{R}^d$.
\end{lemma}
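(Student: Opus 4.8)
The claim is that $f_d$-tightness of $G=(V,E)$, where $f_d = r_d + k$, forces $G$ to be rigid in $\mathbb{R}^d$, i.e.\ $r_d(E) = d|V| - \binom{d+1}{2}$. The plan is to argue by contradiction: suppose $G$ is $f_d$-tight but $r_d(E) < d|V| - \binom{d+1}{2}$. Since $G$ is $f_d$-sparse, $|E| \le r_d(E) + k(E) = r_d(E) + k$; and since $G$ is $f_d$-tight, $|E| = r_d(E) + k$ exactly. I would like to derive that some proper subgraph already uses up the full colour budget $k$ while failing to be rigid, and then iterate or localise to reach a contradiction. The key structural fact I intend to exploit is that $k(F) \le k$ for every $F$, so the ``slack'' in $f_d$ comes entirely from the rigidity term $r_d$; if $G$ is not $\mathbb{R}^d$-rigid, the rigidity matroid is ``loose'' somewhere, and that looseness should conflict with tightness in $f_d$.

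The core step I would carry out: decompose $G$ into its maximal rigid subgraphs (the connected components of $G$ in the $\mathbb{R}^d$-rigidity matroid, i.e.\ the rigid clusters). If $G$ is not rigid there are at least two such clusters, or the clusters do not share enough vertices to glue into a rigid whole. On each rigid subgraph $(V_i, E_i)$ we have $r_d(E_i) = d|V_i| - \binom{d+1}{2}$ provided $|V_i| \ge d+1$ (and a smaller complete-graph count otherwise). I would then bound $|E| = \sum_i |E_i| \le \sum_i r_d(E_i) + \sum_i k(E_i)$, but this naive sum over-counts shared vertices and potentially over-counts colours. The right move is to pick a minimal rigid cover and use the submodularity of $r_d$ together with the inequality $\sum_i k(E_i) \ge k(E) = k$ only when colours are not duplicated across clusters — so instead I would isolate a single rigid subgraph $H$ that is a proper rigid subgraph maximal among those containing all $k$ colours (such an $H$ exists if $G$ itself is not rigid, by taking a maximal rigid subgraph and noting that colours can be absorbed), and then show that $E \setminus E(H)$ together with $H$ violates the count. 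Concretely: $|E(H)| = r_d(E(H)) + k$ by $f_d$-tightness and $k(E(H)) = k$; then for $G$ we need $|E| - |E(H)| = r_d(E) - r_d(E(H))$, forcing every edge outside $H$ to increase the rigidity rank, which (since $H$ already contains all colours and $G$ is $f_d$-tight) means $G$ is obtained from $H$ by rigidity-rank-increasing edge additions — but a sequence of rigidity-rank-increasing additions starting from a rigid graph on $\ge d+1$ vertices keeps the graph rigid, contradiction.

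The main obstacle I anticipate is the bookkeeping around small rigid clusters (fewer than $d+1$ vertices, where $\binom{d+1}{2}$ is the wrong constant) and around the interaction between colour-counting and vertex-sharing: it is genuinely possible for two rigid clusters to share fewer than $d+1$ vertices yet share colours, and then $\sum_i k(E_i)$ can strictly exceed $k$, weakening the inequality in the wrong direction. To handle this cleanly I would phrase the argument entirely in terms of the matroid ${\cal N}_d(V,\chi)$ and the closure operator: an $f_d$-tight graph is a basis of ${\cal N}_d(V,\chi)$ restricted to $E$ with $|E| = f_d(E)$, so $\cl(E) \supseteq$ all of $K_V$; in particular $E$ spans $\{ij : i,j \in V\}$ in ${\cal N}_d$, and since adding all of $K_V$ makes the rigidity term reach $d|V|-\binom{d+1}{2}$, the rank $r_d(\cl(E))$ must already equal $r_d(E)$ while $r_d(K_V) = d|V| - \binom{d+1}{2}$, giving $r_d(E) = d|V| - \binom{d+1}{2}$ directly. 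This last route is the one I would actually write up, as it sidesteps the cluster case analysis; the only thing to verify carefully is that $f_d$-tightness indeed implies $\cl(E)$ is the whole edge set of $K_V$, which follows because $f_d(K_V) \le r_d(K_V) + k = (d|V|-\binom{d+1}{2}) + k$ and equality of $|E|$ with $f_d(E) \le f_d(K_V)$ together with monotonicity pins down that no edge of $K_V$ is independent over $E$.
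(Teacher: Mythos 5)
The paper's proof is a two-line count, and your proposal misses it while building machinery that does not work. Since $E$ is a base of ${\cal N}_d(V,\chi)$, it has the cardinality of the matroid rank, $|E| = dn - \binom{d+1}{2} + k$; on the other hand, $f_d$-sparsity applied to $F = E$ itself gives $|E| \le r_d(E) + k(E) \le r_d(E) + k$. Subtracting, $r_d(E) \ge dn - \binom{d+1}{2}$, which is rigidity. You actually write $|E| = r_d(E) + k$ in your first paragraph, but you never pair it with the base cardinality $dn - \binom{d+1}{2} + k$ — and that pairing is the entire proof.

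Both of your substitute routes have concrete gaps. In the cluster argument, $f_d$-tightness of $G$ gives only $|E(H)| \le r_d(E(H)) + k(E(H))$ for a subgraph $H$, not equality; and a sequence of rigidity-rank-increasing edge additions starting from a rigid $H$ need not stay rigid once the new edges reach vertices outside $V(H)$ (a pendant edge raises $r_d$ by one but leaves the graph far from rigid for $d\ge 2$). In the closure argument, the claim that tightness forces $\cl(E)$ to be all of $E(K_V)$ does not follow from ``$f_d(E)\le f_d(E(K_V))$ plus monotonicity'': for $d=2$, $k=1$, the disjoint union of a $K_4$ and a $K_3$ on seven vertices satisfies $|E| = f_2(E) = 9$ and is $f_2$-sparse, yet it is neither spanning in ${\cal N}_2$ nor rigid. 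What you need at exactly that point is the hypothesis that $E$ is a base, i.e.\ $|E| = dn - \binom{d+1}{2} + k$, so your derivation of the spanning property from tightness alone is circular. (Your subsequent step, that the $r_d$-rank does not grow when passing to the ${\cal N}_d$-closure, is true but also unproved as written: one must note that for the ${\cal N}_d$-circuit $C$ through a new edge $e$, independence of $C-e$ forces $r_d(C)=r_d(C-e)$, whence $\cl(E)\subseteq\cl_{{\cal R}_d}(E)$.) Once you grant that $E$ is a base, however, the closure detour is unnecessary: the direct count above finishes immediately.
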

\begin{proof}
Since $E$ is a base we have $|E|=dn-{d+1\choose 2}+k\leq r_d(E)+k$, implying 
$r_d(E)\geq dn-{d+1\choose 2}$. Thus $E$ attains the maximum rank in the rigidity matroid.
\end{proof}

The $k$-th elongation of the rigidity matroid ${\cal R}_d(V)$ is a matroid whose base family is the set of spanning sets of ${\cal R}_d(V)$ with cardinality equal to $dn-{d+1\choose 2}+k$.
It follows from Lemma~\ref{lem:spanning} that, if all vertices are uniformly coloured, 
${\cal N}_d(V,\chi)$ is the first elongation of ${\cal R}_d(V)$.
We conjecture that this matroid indeed characterizes the rigidity of expanding generic spherical frameworks in any dimension.
\begin{conjecture}
\label{con:one-variable}
Suppose $(G=(V,E),\chi)$ is uniformly coloured. 
Then $(G,\chi)$ is isostatic on $r\mathbb{S}^d$ if and only if $G$ is simple and $E$ is a base of the first elongation of ${\cal R}_d(V)$.
\end{conjecture}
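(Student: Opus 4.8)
The plan is to mirror the proofs of Theorem~\ref{thm:1d} and of the $d=2$, $k=1$ case of Theorem~\ref{thm:2d}, now with the colouring held uniform throughout: establish necessity from the Maxwell-type counts, and establish sufficiency by induction on $|V|$ using the geometric $0$- and $1$-extension results of Section~\ref{sec:prelim}.

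Necessity is essentially done already. If $|V|=n\ge d+2$ and $(G,\chi,p)$ is isostatic on $r\mathbb{S}^d$, then Lemma~\ref{lem:necessity} gives $|E|=dn-\binom{d+1}{2}+1$ together with $|F|\le r_d(F)+1$ for every nonempty $F\subseteq E$ (here $k(F)=1$ for all such $F$, by uniformity), so $E$ is $f_d$-tight; by Lemma~\ref{lem:spanning} and the remark following it, an $f_d$-tight uniformly coloured edge set is exactly a base of the first elongation of ${\cal R}_d(V)$, and simplicity is forced because a loop or a repeated edge gives a dependent row of $R(G,\chi,p)$. (For $n\le d+1$ the statement must be read with $\min\{k,n-(d+1)\}=0$, that is, with the first elongation replaced by ${\cal R}_d(V)$ itself, since there is then no simple $f_d$-tight graph.) For the converse, note that when $k=1$ the condition ``$E$ is a base of the first elongation'' says simply that $E=E_0\cup\{f\}$ where $E_0$ is a base of ${\cal R}_d(V)$ and $f$ is a non-edge of $(V,E_0)$, since adjoining any one new edge to a base of ${\cal R}_d(V)$ automatically produces an $f_d$-tight set.

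For sufficiency I would induct on $n$, with base case $G=K_{d+2}$ checked directly from the rigidity matrix. A convenient packaging of the inductive step is the observation that, for $G_0$ rigid in $\mathbb{R}^d$ and $n\ge d+2$, the framework $(G_0,\chi,p)$ has exactly a one-dimensional space of nontrivial infinitesimal motions --- an ``infinitesimal expansion'' --- because a dimension count using the projection correspondence of \cite{SaliolaWhiteley,i} shows that the linear map $\dot p\mapsto(p(v)\cdot\dot p(v))_{v\in V}$ is automatically surjective onto $\mathbb{R}^{V}$ on $\ker R(G_0,p)$; adjoining the extra edge $f$ then yields an isostatic framework precisely when the infinitesimal expansion changes the length of $f$. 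Thus sufficiency is equivalent to the assertion that, for a generic realisation on the sphere, the infinitesimal expansion of an arbitrary $\mathbb{R}^d$-isostatic graph $G_0$ is not orthogonal to any non-edge. On the side of reductions, Lemma~\ref{lem:0extension} takes care of $0$-extensions, and for a $1$-extension at a (necessarily non-colour-isolated) vertex one invokes Lemma~\ref{lem:1extension2}, whose conclusion $\dot r=0$ in the case $k=1$ turns any hypothetical flex into a fixed-radius flex and hence reduces the step to the standard collinear-triangle argument on the fixed sphere, exactly as recorded after that lemma.

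The genuinely hard part is the combinatorial side of the induction once $d\ge3$. Even locating a reducible vertex is problematic: a base of the first elongation has average degree $2|E|/n=2d-O(1/n)$, so its minimum degree can exceed $d+1$ as soon as $d\ge3$ (for instance an icosahedron together with an extra edge, for $d=3$), in which case no $0$- or $1$-reduction applies anywhere and one needs larger moves --- vertex-splitting or $2$-extension type operations --- together with geometric counterparts in the spirit of Lemma~\ref{lem:1extension2} that, unlike that lemma, are not presently available for non-colour-isolated vertices. More fundamentally, no Henneberg--Laman type recursive construction of the bases of the first elongation of ${\cal R}_d(V)$ is known for $d\ge3$, and double-banana type configurations strongly suggest that none exists using any fixed bounded family of local moves; in the reformulation above this same difficulty reappears as the problem of controlling which non-edges the infinitesimal expansion of an arbitrary $\mathbb{R}^d$-isostatic graph detects, which seems to require precisely the structural understanding of $\mathbb{R}^d$-isostatic graphs that is missing in dimension $\ge3$. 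For $d=1$ the required recursion is supplied by Theorem~\ref{thm:1d}, and for $d=2$, $k=1$ by Theorem~\ref{thm:2d}, so the conjecture already holds in those cases; its content is exactly the claim that the phenomenon persists in every dimension.
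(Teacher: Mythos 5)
This statement is Conjecture~\ref{con:one-variable}, which the paper does not prove: it is confirmed there only for $d\le 2$ (as special cases of Theorems~\ref{thm:1d} and~\ref{thm:2d}, the $d=2$ case resting on the Henneberg construction of simple Laman-plus-one graphs from \cite{haas}), and your proposal, appropriately, does not claim more. What you do establish is correct and matches the paper's own remarks: uniform colouring turns the counts of Lemma~\ref{lem:necessity} into $f_d$-tightness, and Lemma~\ref{lem:spanning} identifies ${\cal N}_d(V,\chi)$ with the first elongation of ${\cal R}_d(V)$, which gives necessity; and your dimension count showing that a generic spherical realisation of an $\mathbb{R}^d$-isostatic graph carries exactly a one-dimensional space of nontrivial motions (the ``infinitesimal expansion''), so that sufficiency reduces to showing this expansion is non-orthogonal to every non-edge, is sound. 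It is worth noting that this reformulation is precisely the kernel-side dual of the algebraic reformulation the paper itself proposes at the end of Section~\ref{sec:higherdim}: adjoining a non-edge $f$ to a base $E_0$ of ${\cal R}_d(V)$ creates a unique rigid circuit, and the expansion preserves the length of $f$ exactly when the unique self-stress $\omega$ of the fixed-sphere matrix annihilates the colour column, i.e.\ $\sum_{v\in V}\omega(v)=0$; so your criterion and the paper's condition $\sum_{v\in V}\omega(v)\neq 0$ on self-stresses of rigid circuits are the same statement viewed from the row and column sides of the matrix. You also correctly diagnose the obstruction for $d\ge 3$ (no guaranteed low-degree vertex, no known recursive construction of bases of the first elongation), which is exactly what the paper records. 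The only caution is presentational: since sufficiency for $d\ge 3$ is not established by your argument or by the paper, your write-up must present that case as open rather than as proved.
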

For $d=2$ it is known that a simple graph whose edge set is a base of the first elongation of ${\cal R}_2(V)$ can be constructed by a sequence of 0-extensions and 1-extensions \cite{haas} (where such a graph is called a Laman-plus-one graph), and based on which Conjecture~\ref{con:one-variable} can be confirmed.
This can be easily extended to simple graphs whose edges sets are bases of ${\cal N}_2(V,\chi)$ for $k\leq 1$. On the other hand for $k\geq 2$ it does not seem to be possible to develop such a simple constructive characterisation even when $d=2$. (Constructing only {\em simple} graphs increases the complication.)
However it turns out that the proof technique of Theorem~\ref{thm:1d} can be directly applied even to the case when $d=2$ and $k\leq 2$.
To this end we shall first give a counterpart of Lemma~\ref{lem:1-2}.
\begin{lemma}
\label{lem:2-2}
Suppose $(G,\chi)$ is $f_d$-sparse.
Then for each colour $c$ either there is no $(c,d)$-tight set or 
$G$ contains  a unique inclusionwise minimal $(c,d)$-tight set.
\end{lemma}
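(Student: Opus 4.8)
The plan is to mimic the proof of Lemma~\ref{lem:1-2}, replacing the role of $g_1=r_1+h_1$ with $f_d=r_d+k$. Suppose for contradiction that for some colour $c$ there are two distinct inclusionwise minimal $(c,d)$-tight sets $F_1$ and $F_2$, so $|F_i|=r_d(F_i)+k(F_i)$ and $c\in\chi(V(F_i))$ for $i=1,2$. The goal is to exhibit a smaller $(c,d)$-tight set contained in one of them, contradicting minimality.

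First I would dispose of the case $F_1\cap F_2=\emptyset$. Here, since $c$ is a colour common to $V(F_1)$ and $V(F_2)$, the submodularity inequality \eqref{eq:k} gives $k(F_1)+k(F_2)\geq k(F_1\cup F_2)+1$. Combining this with the submodularity of $r_d$ yields
\begin{align*}
|F_1\cup F_2|&=|F_1|+|F_2|=r_d(F_1)+k(F_1)+r_d(F_2)+k(F_2)\\
&\geq r_d(F_1\cup F_2)+k(F_1\cup F_2)+1=f_d(F_1\cup F_2)+1,
\end{align*}
contradicting $f_d$-sparsity. (One subtlety: $\{vi\}$ and similar singletons have $r_d$ behaving normally for $d\ge 2$, so there is no issue with the disjoint case being vacuous as it would be in higher-dimensional rigidity matroids where singletons are independent; the argument above is clean.)

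Next, for $F_1\cap F_2\neq\emptyset$, I would run the standard ``equality propagates through a submodular chain'' argument. From $f_d$-sparsity and submodularity of $r_d$ and $k$ (hence of $f_d$),
\begin{align*}
|F_1\cup F_2|+|F_1\cap F_2|&=|F_1|+|F_2|\geq f_d(F_1)+f_d(F_2)\\
&\geq f_d(F_1\cup F_2)+f_d(F_1\cap F_2)\geq|F_1\cup F_2|+|F_1\cap F_2|,
\end{align*}
so equality holds throughout; in particular $|F_1\cap F_2|=f_d(F_1\cap F_2)$, so $F_1\cap F_2$ is $f_d$-tight, and also $k(F_1)+k(F_2)=k(F_1\cup F_2)+k(F_1\cap F_2)$. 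By \eqref{eq:k} this forces $\chi(V(F_1\cap F_2))=\chi(V(F_1)\cap V(F_2))$, which contains $c$ since $c\in\chi(V(F_1))\cap\chi(V(F_2))$ (note $c$ actually appears on a vertex common to both, as the colour classes of $c$ in $V(F_1)$ and $V(F_2)$ must coincide once the colour counts add exactly). Hence $F_1\cap F_2$ is a $(c,d)$-tight set strictly contained in $F_1$, contradicting the minimality of $F_1$. This completes the proof.

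I expect the only genuinely delicate point to be verifying that $c$ actually lies in $\chi(V(F_1\cap F_2))$ rather than merely in $\chi(V(F_1))\cap\chi(V(F_2))$: one must check that the equality $k(F_1)+k(F_2)=k(F_1\cup F_2)+k(F_1\cap F_2)$, read through \eqref{eq:k}, says precisely that every colour appearing in both $V(F_1)$ and $V(F_2)$ already appears on a vertex lying in $V(F_1\cap F_2)=V(F_1)\cap V(F_2)$ — i.e. the set of ``colours common to $V(F_1)$ and $V(F_2)$'' equals $k(F_1\cap F_2)$ colours realised inside the intersection. Since this is exactly the step used in Lemma~\ref{lem:1-2}, and the $h_d$ term played no essential role there beyond being submodular, the argument transfers verbatim; indeed this case is simpler than Lemma~\ref{lem:1-2} because we work with $f_d=r_d+k$ and never need to unpack the $\min$ defining $h_d$.
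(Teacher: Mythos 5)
Your proof is correct and follows essentially the same route as the paper's: run the submodular chain for $f_d=r_d+k$, extract $|F_1\cap F_2|=f_d(F_1\cap F_2)$ and $k(F_1)+k(F_2)=k(F_1\cup F_2)+k(F_1\cap F_2)$, and read the latter through \eqref{eq:k} to place $c$ in $\chi(V(F_1\cap F_2))$. The only cosmetic difference is that you treat $F_1\cap F_2=\emptyset$ as a separate case, whereas the paper disposes of it implicitly (once $c\in\chi(V(F_1\cap F_2))$ the intersection is automatically nonempty); also note that what the equality in \eqref{eq:k} forces is $\chi(V(F_1\cap F_2))=\chi(V(F_1))\cap\chi(V(F_2))$, an equality of colour sets, not the vertex-set identity $V(F_1\cap F_2)=V(F_1)\cap V(F_2)$ you mention in passing — but only the colour statement is used, so the argument stands.
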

\begin{proof}
Suppose there are two distinct inclusionwise minimal $c$-tight sets $F_1$ and $F_2$.
By the submodularity of $f_d$ 
we have 
\begin{align*}|F_1\cup F_2|+|F_1\cap F_2|&=|F_1|+|F_2|\geq
f_d(F_1)+f_d(F_2)\\ &\geq f_d(F_1\cup F_2)+f_d(F_1\cap F_2)
\\ &\geq |F_1\cup F_2|+|F_1\cap F_2|.\end{align*}
Hence the equality holds everywhere.
In particular, since $r_d$ and $k$ are nonnegative submodular, we have 
\begin{align}
\label{eq:lem2-2-1}
|F_1\cap F_2|&=f_d(F_1\cap F_2) \mbox{ and } \\ \label{eq:lem2-2-2}
k(F_1)+k(F_2)&=k(F_1\cup F_2)+k(F_1\cap F_2).
\end{align}
Comparing (\ref{eq:lem2-2-2}) with (\ref{eq:k}) we get $\chi(V(F_1\cap F_2))=\chi(V(F_1)\cap V(F_2))$;
in particular, $c\in \chi(V(F_1\cap F_2))$ and hence $F_1\cap F_2\neq \emptyset$.
Hence (\ref{eq:lem2-2-1}) implies that $F_1\cap F_2$ is $c$-tight, contradicting the minimality of $F_1$.
\end{proof}

\begin{theorem}
\label{thm:2d}
Suppose $(G=(V,E),\chi)$ is a $k$-coloured graph with $k\leq 2$ 
and $(G,\chi,p)$ is generic.
Then $(G,p)$ is isostatic on $r\mathbb{S}^2$ if and only if 
$G$ is a simple $f_2$-tight graph.
\end{theorem}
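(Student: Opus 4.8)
The necessity direction is immediate: two parallel edges give a dependent pair of rows in $R(G,\chi,p)$, so an isostatic framework is simple, and the condition that $E$ is a base of ${\cal N}_2(V,\chi)$ follows from Lemma~\ref{lem:necessity} after discarding the truncations (since $\min\{k(F),|V(F)|-3\}\le k(F)=f_2$-contribution, and, for $k\le 2$ with $G$ spanning, $\min\{k,n-3\}=k$). So the content is the converse: every simple $f_2$-tight $(G,\chi)$ with $k\le 2$ admits a generic isostatic realisation. As the text anticipates, the plan is to rerun the proof of Theorem~\ref{thm:1d} essentially verbatim with $d=2$, by induction on $|V|+k$. The base $k\le 1$ of the $k$-direction is the known fact \cite{haas} that a simple base of ${\cal N}_2(V,\chi)$ with at most one colour is built by a sequence of $0$- and $1$-extensions, combined with Lemmas~\ref{lem:0extension} and~\ref{lem:1extension1}.

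For the inductive step, since $2|E|=4n-6+2k\le 4n-2$ some vertex has degree at most $3$; using that an $f_2$-tight graph is rigid in $\mathbb R^2$ (Lemma~\ref{lem:spanning}) and $f_2$-sparse, hence has no vertex of degree $\le 1$ and no colour-isolated vertex of degree $2$, one sees that $(G,\chi)$ contains a degree-$2$ vertex, a colour-isolated vertex of degree $3$, or a degree-$3$ vertex that is uncoloured or non-colour-isolated (this is the $d=2$ analogue of Lemma~\ref{lem:1-1}). If $v$ is a degree-$2$ vertex (necessarily uncoloured or non-colour-isolated) or a colour-isolated vertex of degree $3$, then $G-v$ is again simple and $f_2$-tight (the cardinalities match and $f_2$-sparsity is inherited), so we finish by induction and Lemma~\ref{lem:0extension}. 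If $v$ is an uncoloured degree-$3$ vertex all of whose neighbours are uncoloured, then the matroid-closure argument of Claim~\ref{claim:1} applies --- here one uses that a $K_4$ on four uncoloured vertices is dependent in ${\cal N}_2(V,\chi)$, its six edges exceeding $f_2=r_2+0=5$ --- producing a valid simple $1$-reduction; Lemma~\ref{lem:1extension1} and induction then finish this case.

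The remaining case, where the degree-$3$ vertex $v$ is non-colour-isolated, or uncoloured with at least one coloured neighbour, is the decisive one: a combinatorial $1$-reduction is unavailable, since the relevant $K_4$ need no longer be dependent once a vertex is coloured, and $f_2$-sparsity (unlike $g_1$-sparsity) does not forbid the parallel edge such a reduction might create. Here I would argue exactly as in Theorem~\ref{thm:1d}: assuming a generic $(G,\chi,p)$ is flexible, $(G-v,\chi,p)$ has one degree of freedom by induction (applied, as there, in the form that a generic realisation of an $f_2$-sparse graph is independent), hence so does $(G,\chi,p)$, and Lemma~\ref{lem:1extension2} (or its evident variant for uncoloured $v$ of degree $>d$) supplies a nontrivial infinitesimal motion $(\dot p,\dot r)$ with $\dot r$ vanishing on the colour $c$ of some $u\in N_G(v)\cup\{v\}$. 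Uncolouring $c$ keeps the framework flexible; and a $d=2$ analogue of Lemma~\ref{lem:G'} --- proved just as there, using that by Lemma~\ref{lem:2-2} there is a unique minimal $(c,2)$-tight set and (by the same submodularity computation) every $(c,2)$-tight set contains it --- produces a simple $f_2$-tight $G'\subseteq G$, spanning $V$, with the $(k-1)$-colouring. By induction $G'$, hence $G$, is rigid for the reduced colouring, a contradiction; so $(G,\chi,p)$ is rigid and therefore isostatic.

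The main obstacle is precisely this last case: one must reduce at coloured low-degree vertices while remaining inside the class of \emph{simple} $f_2$-tight graphs, and since $f_2$-sparsity permits parallel edges there is no purely combinatorial $1$-reduction to fall back on --- one is forced onto the geometric ``uncolour a colour'' argument built from Lemma~\ref{lem:1extension2} and the analogue of Lemma~\ref{lem:G'}. It is exactly the hypothesis $k\le 2$ that makes this recursion close, since a single uncolouring lands in the $k\le 1$ regime of \cite{haas}; for $k\ge 3$ the counterexamples of Section~\ref{sec:higherdim} show that the scheme genuinely fails. Some care is also needed for the finitely many small or dense cases ($k\ge n-3$, or $n$ too small for a generic realisation to span $\mathbb R^3$), but these are checked directly.
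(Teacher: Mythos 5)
Your proof follows the paper's argument essentially verbatim: the same degree count from $|E|=2|V|-3+k\le 2|V|-1$, the same three-way case split (0-reduction for degree-two or colour-isolated degree-three vertices, the matroid-closure argument of Claim~\ref{claim:1} when $N_G(v)\cup\{v\}$ is uncoloured, and the uncolouring argument built from Lemma~\ref{lem:1extension2} together with Lemma~\ref{lem:2-2} in place of Lemma~\ref{lem:1-2} for the remaining case), and you correctly identify the coloured degree-three case as the crux. The only quibble is your closing diagnosis: the hypothesis $k\le 2$ is needed not so that a single uncolouring lands in the $k\le 1$ regime (the induction on $|V|+k$ closes regardless of where one step lands), but --- as you in fact use at the start of your inductive step --- to guarantee a vertex of degree at most three, which is precisely what can fail for $k\ge 3$.
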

\begin{proof}
Since $|E|=2|V|-3+k\leq 2|V|-1$, the average degree is less than four,
and $G$ has a vertex of degree three.
If $G$ has a vertex of degree two or a colour-isolated vertex of degree three, 
then the statement follows by Lemma~\ref{lem:0extension}.
Hence assume that $G$ has a non-colour-isolated vertex $v$ of degree three. 

If all vertices in $N_G(v)\cup \{v\}$ are uncoloured, 
one can easily check by the argument in Claim \ref{claim:1} that 1-reduction is admissible so that the resulting graph is simple and $f_2$-tight. Hence the statement follows by induction and Lemma~\ref{lem:1extension1}.
(Note that an uncoloured $K_4$ is dependent.)

If some vertex in $N_G(v)\cup \{v\}$ is coloured, then 
 we can apply exactly the same argument as in the proof of Theorem~\ref{thm:1d} by utilising Lemma~\ref{lem:2-2} instead of Lemma~\ref{lem:1-2}. 
\end{proof}

A natural question is for how large $k$ the $f_2$-tightness characterizes the generic rigidity.
Unfortunately already for $k=3$ there is an example which is a simple $f_2$-tight graph but is not rigid, see Figure \ref{fig:bananas}. It is then easy to extend this example to any $k\geq 3$ colours using 0- and 1-extensions. In particular the famous double banana graph arises from the graph in Figure \ref{fig:bananas}(b) by a colour-isolated 0-extension followed by colouring a fifth vertex and adding a single edge.

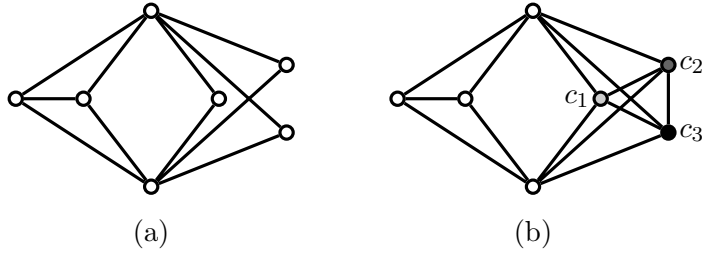
\begin{figure}[htp]
\begin{center}
\begin{tikzpicture}[very thick,scale=0.9]
\tikzstyle{every node}=[circle, draw=black, fill=white, inner sep=0pt, minimum width=5pt];

\node(p1) at (0,1.3) {};
\node(p2) at (-2,0) {};
\node(p3) at (0,-1.3) {};
\node(p4) at (-1,0) {};
\node(p5) at (1,0) {};
\node(p6) at (2,.5) {};
\node(p7) at (2,-.5) {};

\draw(p1)--(p2) -- (p3) -- (p4) -- (p2);

\draw(p4) -- (p1)--(p5)--(p3)--(p6)--(p1)--(p7)--(p3);

\node[rectangle,draw=white, fill=white](a) at (0,-2) {(a)};

\end{tikzpicture}
\hspace{1cm}
\begin{tikzpicture}[very thick,scale=0.9]
\tikzstyle{every node}=[circle, draw=black, fill=white, inner sep=0pt, minimum width=5pt];

\node(p1) at (0,1.3) {};
\node(p2) at (-2,0) {};
\node(p3) at (0,-1.3) {};
\node(p4) at (-1,0) {};
\node[fill=black!20!white](p5) at (1,0) {};
\node[fill=black!60!white](p6) at (2,.5) {};
\node[fill=black](p7) at (2,-.5) {};

\node[rectangle,draw=white, fill=white](l) at (0.65,0) {$c_1$};
\node[rectangle,draw=white, fill=white](l) at (2.35,0.5) {$c_2$};
\node[rectangle,draw=white, fill=white](l) at (2.35,-0.5) {$c_3$};

\draw(p1)--(p2) -- (p3) -- (p4) -- (p2);

\draw(p4) -- (p1)--(p5)--(p3)--(p6)--(p1)--(p7)--(p3);

\draw(p5)--(p6)--(p7)--(p5);

\node[rectangle,draw=white, fill=white](a) at (0,-2) {(b)};

\end{tikzpicture}
\end{center}
\vspace{-0.3cm}
\caption{Graphs of generic frameworks in $r\mathbb{S}^2$: (a) an isostatic uncoloured graph and (b) a 3-coloured graph (with colours $c_1,c_2,c_3$) which is $f_2$-tight  but is not isostatic.}
\label{fig:bananas}
\end{figure}

Note that the current proof of Conjecture~\ref{con:one-variable} for $d\leq 2$ (as a special case of Theorems \ref{thm:1d} and \ref{thm:2d}) relies on the existence of low degree vertices, and the proof cannot be extended to the case when $d\geq 3$ as in the  case for generic 3-dimensional rigidity. 
However it might be possible to show Conjecture~\ref{con:one-variable} by an algebraic argument.
Note that Conjecture~\ref{con:one-variable} holds once one can prove that, for any generic $(G,p)$ on $\mathbb{S}^d$ such that $E$ is a rigid circuit of the $d$-dimensional rigidity matroid,  a nonzero self-stress $\omega$ of $(G,p)$ satisfies $\sum_{v\in V} \omega(v)\neq 0$,
where, for a framework $(G,p)$ on $\mathbb{S}^d$, $\omega:V\cup E\rightarrow \mathbb{R}$ is said to be a self-stress if $\omega^{\top}R_{\mathbb{S}^{d}}(G,p)=0$ (by regarding $\omega$ as a $(|V|+|E|)$-dimensional vector).

\subsection{Rank formula}

We conclude this section by deriving an explicit rank formula for ${\cal N}_2(V,\chi)$.
Since $f_d$ is a nonnegative submodular function, the rank of an edge set $E$ in the induced matroid ${\cal N}_d(V,\chi)$ can be written as 
\begin{equation}
\label{eq:r1}
\min\{|E\setminus F|+f_d(F) \mid F\subseteq E\}.
\end{equation}
For $d=2$, a simple formula of the rank function $r_2$ of the generic rigidity matroid ${\cal R}_2(V)$ is known~\cite{ly}, and combining it with (\ref{eq:r1}) we get an explicit rank formula of ${\cal N}_2(V,\chi)$.
By Theorem~\ref{thm:2d}, this formula gives a combinatorial description of the rank of the rigidity matrix $R(G,\chi,p)$ for generic $p$ and $k\leq 2$.

To see this, let us give the rank formula in \cite{ly}.
For a graph $G=(V,E)$, a {\em cover} is a family ${\cal X}=\{X_1,\dots, X_k\}$ of subsets of $V$ 
such that $E=\bigcup_{i=1}^k E(X_i)$, where $E(X_i)$ denotes the set of edges induced by $X_i$. 
The cover ${\cal X}$ is \emph{$1$-thin} if every pair of sets in ${\cal X}$ intersect in at most one vertex.
The following is the formula by Lov{\'a}sz and Yemini~\cite{ly}:
\begin{equation}
\label{eq:r2}
r_2(E)=\min\left\{\sum_{X_i\in {\cal X}} (2|X_i|-3) ~\Bigg| \text{ a 1-thin cover {\cal X} of $G=(V,E)$}\right\}.
\end{equation}
See, e.g., \cite{jordan} for applications of the rank formula.
To extend this, we introduce one additional condition on  covers. For $X\subseteq V$, a cover ${\cal X}$ is said to be {\em $X$-restricted} if 
$|X_i|\geq 3$ implies $X_i\subseteq X$ for each $X_i\in {\cal X}$.
Then we have the following.
\begin{theorem}
Let $(G = (V, E), \chi)$ be a $k$-coloured graph with $k\leq 2$ 
and $(G,\chi, p)$ be a generic framework on $r\mathbb{S}^2$.
Then 
\[
{\rm rank}\ R(G,\chi,p)=
\min\left\{ \sum_{X_i\in {\cal X}} (2|X_i|-3)+k(X) \Bigg|
\begin{array}{cc}
X\subseteq V \\
\text{ an $X$-restricted 1-thin cover } {\cal X} \text{ of $G$} 
\end{array}
\right\}
\]
where $k(X)$ denotes the number of colors in $X$.
\end{theorem}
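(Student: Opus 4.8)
The plan is to combine Theorem~\ref{thm:2d} with the two rank formulas (\ref{eq:r1}) and (\ref{eq:r2}). In view of Theorem~\ref{thm:2d} it suffices to show that the rank of $E$ in $\mathcal{N}_2(V,\chi)$, which by (\ref{eq:r1}) equals $\min\bigl\{|E\setminus F|+r_2(F)+k(F) : F\subseteq E\bigr\}$ (recalling $k(F)=k(V(F))$), coincides with the claimed minimum over pairs $(X,\mathcal{X})$. Throughout I use the standard convention that in a cover every part induces at least one edge (hence has at least two vertices), so that the Lov{\'a}sz--Yemini minimum in (\ref{eq:r2}) is well defined.

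The first step is to reduce the inner minimisation to induced edge sets, i.e.\ to prove
\[
\min_{F\subseteq E}\bigl\{|E\setminus F|+r_2(F)+k(F)\bigr\}=\min_{X\subseteq V}\bigl\{|E\setminus E(X)|+r_2(E(X))+k(X)\bigr\}.
\]
For ``$\le$'' note $V(E(X))\subseteq X$, so $F=E(X)$ is feasible and $f_2(E(X))=r_2(E(X))+k(E(X))\le r_2(E(X))+k(X)$. For ``$\ge$'' take an optimal $F$ and put $X=V(F)$; then $E(X)\supseteq F$, $k(X)=k(F)$, and since adjoining the $|E(X)\setminus F|$ edges of $E(X)\setminus F$ lowers the ``$|E\setminus\,\cdot\,|$'' term by exactly that many and raises $r_2$ by at most that many, the objective does not increase.

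The second step passes between ``$F=E(X)$'' and ``$X$-restricted $1$-thin cover''. For the inequality ``$\ge$'' of the theorem, take an $X$ attaining the right-hand minimum above and, applying (\ref{eq:r2}) to the graph $(X,E(X))$, a $1$-thin cover $\mathcal{Y}$ of it with $\sum_{Y\in\mathcal{Y}}(2|Y|-3)=r_2(E(X))$ and every $Y\subseteq X$; adjoining the $2$-element set $\{u,v\}$ for each $uv\in E\setminus E(X)$ gives a cover $\mathcal{X}$ of $G$ that is $X$-restricted (each part of size at least three lies in $\mathcal{Y}$, hence in $X$) and $1$-thin: the new $2$-sets pairwise meet in at most one vertex since $G$ is simple, and a new $2$-set $\{u,v\}$ meets any $Y\in\mathcal{Y}$ in at most one vertex, since $\{u,v\}\subseteq Y\subseteq X$ would force $uv\in E(X)$. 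Its objective value is $r_2(E(X))+|E\setminus E(X)|+k(X)$, which by the first step equals the rank of $R(G,\chi,p)$. Conversely, given any $X$-restricted $1$-thin cover $\mathcal{X}$, split it as $\mathcal{X}=\mathcal{X}_{\ge 3}\sqcup\mathcal{X}_2$ by part size and set $F=\bigcup_{X_i\in\mathcal{X}_{\ge 3}}E(X_i)$. Then $\mathcal{X}_{\ge 3}$ $1$-thinly covers $F$, so $r_2(F)\le\sum_{X_i\in\mathcal{X}_{\ge 3}}(2|X_i|-3)$ by (\ref{eq:r2}); every edge of $E\setminus F$ is covered by some part of $\mathcal{X}_2$, each of which covers at most one edge, so $|E\setminus F|\le|\mathcal{X}_2|=\sum_{X_i\in\mathcal{X}_2}(2|X_i|-3)$; and $V(F)\subseteq\bigcup_{X_i\in\mathcal{X}_{\ge 3}}X_i\subseteq X$ gives $k(F)\le k(X)$. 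Summing and invoking (\ref{eq:r1}) once more yields that the rank of $R(G,\chi,p)$ is at most $|E\setminus F|+f_2(F)\le\sum_{X_i\in\mathcal{X}}(2|X_i|-3)+k(X)$, which completes the identity.

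I expect the main difficulty to be making the two reductions dovetail, and in particular keeping the composite family $1$-thin: this is precisely where the simplicity of $G$ and the $X$-restrictedness hypothesis are used, since without the latter a large dense part of a cover could share two vertices with a $2$-element part coming from an edge outside $E(X)$. The other point requiring care is the bookkeeping between $k(F)$ and $k(X)$, which agree for $F=E(X)$ exactly when no coloured vertex of $X$ is isolated in $E(X)$, and which is what makes $k(X)$ (rather than $k(F)$) the right quantity in the formula.
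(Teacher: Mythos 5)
Your proposal is correct and follows essentially the same route as the paper: invoke Theorem~\ref{thm:2d} to reduce to the rank in ${\cal N}_2(V,\chi)$, use (\ref{eq:r1}) and observe the minimiser $F$ may be taken induced, then convert between induced edge sets and $X$-restricted $1$-thin covers via the Lov\'asz--Yemini formula (\ref{eq:r2}). The only difference is that you spell out the converse inequality (that every $X$-restricted $1$-thin cover upper-bounds the rank), which the paper leaves implicit in its final sentence; your verification of $1$-thinness and of the bookkeeping between $k(F)$ and $k(X)$ is sound.
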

\begin{proof}
By Theorem~\ref{thm:2d}, 
the rank of $R(G,\chi,p)$ is equal to the rank of $E$ in ${\cal N}_d(V,\chi)$.
By (\ref{eq:r1}),
\begin{equation}
\label{eq:r3}
{\rm rank}\ R(G,\chi,p)=\min\{|E\setminus F|+r_2(F)+k(F)\mid F\subseteq E\}.
\end{equation}
Clearly the minimizer $F$ of (\ref{eq:r3}) can be taken so that $F$ is induced, i.e., $F$ is the edge set of the subgraph induced by some $X\subseteq V$.
Let ${\cal X}'$ be a  1-thin cover minimizing (\ref{eq:r2}) for $G[X]$,
and let ${\cal X}={\cal X}'\cup \{\{u,v\}\mid e=uv\in E\setminus F\}$.
Then ${\cal X}$ is an $X$-restricted  1-thin cover of $G$.
Also, since $F$ is induced, $|E\setminus F|=\sum_{X_i\in {\cal X}\setminus {\cal X}'}(2|X_i|-3)$.
Thus, by (\ref{eq:r2}) and (\ref{eq:r3}), we get the stated formula.
\end{proof}

%
%
%
%

\section{Symmetry-induced motions}
\label{sec:sym}

The frameworks appearing in our suggested applications typically exhibit symmetry and it turns out that symmetry frequently induces (continuous) flexibility in frameworks which are rigid in $r\mathbb{S}^d$ if vertices are placed generically (see for example Figure~\ref{fig:2D}). Moreover, these added motions typically preserve the original symmetry of the framework throughout the path. Thus, in the following, we establish symmetry-adapted  combinatorial  counts which allow us to detect such symmetry-induced motions. 

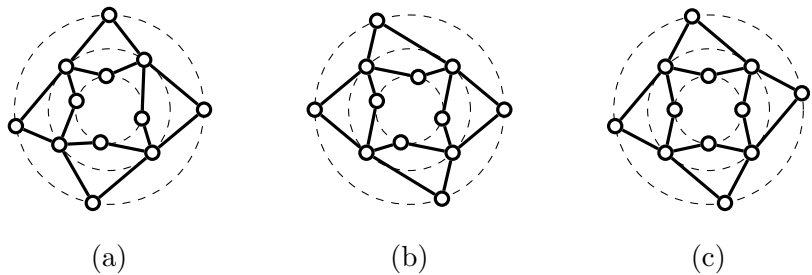
\begin{figure}[htp]
\begin{center}
\begin{tikzpicture}[very thick,scale=0.9]
\tikzstyle{every node}=[circle, draw=black, fill=white, inner sep=0pt, minimum width=5pt];
\filldraw[fill=white, draw=black, thin, dashed](0,0)circle(1.4cm);
\filldraw[fill=white, draw=black, thin, dashed](0,0)circle(0.9cm);
\filldraw[fill=white, draw=black, thin, dashed](0,0)circle(0.5cm);

\node(p1) at (95:0.5cm) {};
\node(p2) at (165:0.5cm) {};
\node(p3) at (255:0.5cm) {};
\node(p4) at (345:0.5cm) {};

\node(pp1) at (55:0.9cm) {};
\node(pp2) at (135:0.9cm) {};
\node(pp3) at (215:0.9cm) {};
\node(pp4) at (315:0.9cm) {};

\draw(p1)--(pp2);
\draw(p2)--(pp2);
\draw(p2)--(pp3);
\draw(p3)--(pp3);
\draw(p3)--(pp4);
\draw(p4)--(pp4);
\draw(p4)--(pp1);
\draw(p1)--(pp1);

\node(ppp1) at (90:1.4cm) {};
\node(ppp2) at (190:1.4cm) {};
\node(ppp3) at (260:1.4cm) {};
\node(ppp4) at (0:1.4cm) {};

\draw(pp3)--(ppp2);
\draw(pp2)--(ppp2);
\draw(pp4)--(ppp3);
\draw(pp3)--(ppp3);
\draw(pp1)--(ppp4);
\draw(pp4)--(ppp4);
\draw(pp2)--(ppp1);
\draw(pp1)--(ppp1);
\node[rectangle,draw=white, fill=white](a) at (0,-2.2) {(a)};

\end{tikzpicture}
\hspace{1cm}
\begin{tikzpicture}[very thick,scale=0.9]
\tikzstyle{every node}=[circle, draw=black, fill=white, inner sep=0pt, minimum width=5pt];
\filldraw[fill=white, draw=black, thin, dashed](0,0)circle(1.4cm);
\filldraw[fill=white, draw=black, thin, dashed](0,0)circle(0.9cm);
\filldraw[fill=white, draw=black, thin, dashed](0,0)circle(0.5cm);

\node(p1) at (75:0.5cm) {};
\node(p2) at (165:0.5cm) {};
\node(p3) at (255:0.5cm) {};
\node(p4) at (345:0.5cm) {};

\node(pp1) at (45:0.9cm) {};
\node(pp2) at (135:0.9cm) {};
\node(pp3) at (225:0.9cm) {};
\node(pp4) at (315:0.9cm) {};

\draw(p1)--(pp2);
\draw(p2)--(pp2);
\draw(p2)--(pp3);
\draw(p3)--(pp3);
\draw(p3)--(pp4);
\draw(p4)--(pp4);
\draw(p4)--(pp1);
\draw(p1)--(pp1);

\node(ppp1) at (110:1.4cm) {};
\node(ppp2) at (180:1.4cm) {};
\node(ppp3) at (290:1.4cm) {};
\node(ppp4) at (0:1.4cm) {};

\draw(pp3)--(ppp2);
\draw(pp2)--(ppp2);
\draw(pp4)--(ppp3);
\draw(pp3)--(ppp3);
\draw(pp1)--(ppp4);
\draw(pp4)--(ppp4);
\draw(pp2)--(ppp1);
\draw(pp1)--(ppp1);
\node[rectangle,draw=white, fill=white](a) at (0,-2.2) {(b)};

\end{tikzpicture}
\hspace{1cm}
\begin{tikzpicture}[very thick,scale=0.9]
\tikzstyle{every node}=[circle, draw=black, fill=white, inner sep=0pt, minimum width=5pt];
\filldraw[fill=white, draw=black, thin, dashed](0,0)circle(1.4cm);
\filldraw[fill=white, draw=black, thin, dashed](0,0)circle(0.9cm);
\filldraw[fill=white, draw=black, thin, dashed](0,0)circle(0.5cm);

\node(p1) at (90:0.5cm) {};
\node(p2) at (180:0.5cm) {};
\node(p3) at (270:0.5cm) {};
\node(p4) at (0:0.5cm) {};

\node(pp1) at (45:0.9cm) {};
\node(pp2) at (135:0.9cm) {};
\node(pp3) at (225:0.9cm) {};
\node(pp4) at (315:0.9cm) {};

\draw(p1)--(pp2);
\draw(p2)--(pp2);
\draw(p2)--(pp3);
\draw(p3)--(pp3);
\draw(p3)--(pp4);
\draw(p4)--(pp4);
\draw(p4)--(pp1);
\draw(p1)--(pp1);

\node(ppp1) at (100:1.4cm) {};
\node(ppp2) at (190:1.4cm) {};
\node(ppp3) at (280:1.4cm) {};
\node(ppp4) at (10:1.4cm) {};

\draw(pp3)--(ppp2);
\draw(pp2)--(ppp2);
\draw(pp4)--(ppp3);
\draw(pp3)--(ppp3);
\draw(pp1)--(ppp4);
\draw(pp4)--(ppp4);
\draw(pp2)--(ppp1);
\draw(pp1)--(ppp1);
\node[rectangle,draw=white, fill=white](a) at (0,-2.2) {(c)};

\end{tikzpicture}
\end{center}
\vspace{-0.3cm}
\caption{Frameworks in $r\mathbb{S}^1$ with $k=3$ colours: (a) a non-symmetric generic realization which is infinitesimally rigid (over-braced by $2$); (b) a half-turn-symmetric realization which is still infinitesimally rigid and $\mathbb{Z}_2$-symmetric isostatic; (c) a realization with 4-fold rotational symmetry which is (continuously) flexible with a symmetry-preserving flex.}
\label{fig:2D}
\end{figure}

\subsection{Symmetric coloured graphs}

Let $(G,\chi)$ be a coloured graph. An {\em automorphism} of $(G,\chi)$ is a permutation $\pi:V\rightarrow V$ such that $\{i,j\}\in E$ if and only if $\{\pi(i),\pi(j)\}\in E$, and $\chi(\pi(i))=\chi(i)$ for all $i\in V$.  The   group  of all automorphisms of  $(G,\chi)$ is denoted by $\textrm{Aut}(G,\chi)$. For an abstract group $\Gamma$, we say that $(G,\chi)$ is {\em $\Gamma$-symmetric} if there exists a group action  $\theta:\Gamma \to \textrm{Aut}(G,\chi)$.
 In the following, we will always assume that the action $\theta$ is free on the vertex set of $G$, and we will  omit $\theta$ if it is clear from the context. We will then simply write $\gamma i$ instead of $\theta(\gamma)(i)$.  

 The {\em quotient graph} of a $\Gamma$-symmetric coloured graph $(G,\chi)$ is the coloured multigraph $(G/\Gamma,\chi_0)$ whose vertex set is the set $V/\Gamma$
of vertex orbits and whose edge set is the set $E/\Gamma$ of edge orbits. The function $\chi_0$ assigns to the vertex orbit $\Gamma i=\{\gamma i \mid \gamma \in \Gamma\}$ the colour $\chi(i)$. Note that an edge orbit may be represented by a loop in $G/\Gamma$.

The {\em quotient $\Gamma$-gain graph} of a $\Gamma$-symmetric coloured graph $(G,\chi)$ is the pair\break $((G_0,\chi_0),\psi)$, where  $(G_0,\chi_0)=((V_0,E_0),\chi_0)$ is the quotient graph of $(G,\chi)$ with an orientation on the edges, and $\psi:E_0\to \Gamma$ is defined as follows. 
Each edge orbit $\Gamma e$ connecting $\Gamma i$ and $\Gamma j$ in $G/\Gamma$ can be written as $\{\{\gamma i,\gamma \circ\alpha j\}\mid \gamma\in \Gamma \}$ for a unique $\alpha\in \Gamma$. For each $\Gamma e$, orient $\Gamma e$ from $\Gamma i$ to $\Gamma j$ in $G/\Gamma$ and assign to it the gain $\alpha$. Then $E_0$ is the resulting set of oriented edges, and $\psi$ is the corresponding gain assignment.
(See \cite{jkt} for details.)

Suppose $\Gamma$ is an abstract multiplicative group. 
A closed walk $C=v_1,e_1,v_2,\dots,v_k,e_k,\break v_1$  in a quotient $\Gamma$-gain graph $((G_0,\chi_0),\psi)$ is called {\em balanced} if $\psi(C)=\Pi_{i=1}^k \psi(e_i)^{{\rm sign}(e_i)}=\textrm{id}$,
where ${\rm sign}(e_i)=1$ if $e_i$ is directed from $v_i$ to $v_{i+1}$, and ${\rm sign}(e_i)=-1$  otherwise. We say that an edge subset $F_0\subseteq E_0$ is {\em balanced} if all closed walks in $F_0$ are balanced; otherwise it is called {\em unbalanced}.

For a subset $F_0\subseteq E_0$, and a vertex $v$ of the vertex set $V(F_0)\subseteq V_0$ induced by $F_0$, the \emph{subgroup induced by $F_0$} is the subgroup $\langle F_0\rangle_{\psi,v}=\{\psi(C)|\, C\in \pi_1(F_0,v)\}$ of $\Gamma$, where $\pi_1(F_0,v)$ is the set of closed walks starting at $v$ using only edges of $F_0$, and $\pi_1(F_0,v)=\emptyset$  if $v\notin V(F_0)$. 
If $\psi$ is clear from the context, then we also simply write $\langle F_0\rangle_{v}$ for $\langle F_0\rangle_{\psi,v}$. 

\subsection{Symmetric frameworks in $r\mathbb{S}^d$}

Let $(G,\chi)$ be a   $\Gamma$-symmetric coloured graph (with respect to the group action $\theta:\Gamma \to \textrm{Aut}(G)$) and let $\tau:\Gamma\rightarrow O(\mathbb{R}^{d+1})$ be a group representation. A framework $(G,\chi,p)$ is called {\em $\Gamma$-symmetric} (with respect to $\theta$ and  $\tau$) if \begin{equation*}
\label{eq:symmetricfw}
\tau(\gamma) p_i=p_{\gamma i}\qquad \text{for all } \gamma\in \Gamma \text{ and all } i\in V.
\end{equation*}

An infinitesimal motion $(\dot{p},\dot{r})$ of a $\Gamma$-symmetric framework $(G,\chi,p)$ in $r\mathbb{S}^d$ (with respect to $\theta$ and $\tau$) is called {\em $\Gamma$-symmetric} if it satisfies $\tau(\gamma)\dot{p}_i=\dot{p}_{\gamma i}$ for all  $\gamma\in \Gamma$ and $ i\in V$, i.e., if the velocity vectors of the infinitesimal motion exhibit the same symmetry as the framework.
$(G,\chi,p)$ is {\em $\Gamma$-symmetric infinitesimally rigid} if  every $\Gamma$-symmetric infinitesimal motion is trivial.

The following symmetric analog of the rigidity matrix (recall (\ref{eq:rigmat})) can be used to analyze the $\Gamma$-symmetric infinitesimal rigidity properties of a framework $(G,\chi,p)$ in $r\mathbb{S}^d$.
To define this matrix we fix a representative vertex $i$ of each vertex orbit $\Gamma i$,
and define the {\em quotient} of $p$ to be $\tilde{p}:V_0\rightarrow \mathbb{R}^{d+1}$,
so that there is a one-to-one correspondence between $p$ and $\tilde{p}$ given by
$p(i)=\tilde{p}(C(i))$, where $C:(G,\chi)\rightarrow (G_0,\chi_0)$ is the {\em covering map} defined by $C(\gamma i)=i$ and $C(\{\gamma i,\gamma\psi(e)j\})=(i,j)$.

Let $((G_0,\chi_0),\psi)$ be the  quotient $\Gamma$-gain graph of $(G,\chi)$.  The \emph{orbit rigidity matrix} $O((G_0,\chi_0),\psi,\tilde{p})$  of the framework $(G,\chi,p)$  is the matrix of size $(|E_0|+|V_0|) \times ((d+1)|V_0|+k)$ consisting of four blocks
\begin{equation}\label{eq:orbitrigmat}
O((G_0,\chi_0),\psi,\tilde{p})=\begin{pmatrix}
O(G_0,\psi,\tilde{p}) & 0 \\
S(G_0,\tilde{p}) & V(G_0,\chi_0,\tilde{p})
\end{pmatrix},
\end{equation}
where $O(G_0,\psi,\tilde{p})$ is the orbit rigidity matrix of the framework $(G,p)$ in $\mathbb{R}^{d+1}$ introduced first in \cite{BSWW}.
The explicit entries of  $O(G_0,\psi,\tilde{p})$ have the following form: the row corresponding to the edge $e=(i,j)\in E_0$ is given by
\[\begin{array}{ccccc}
       & \overbrace{\hspace{25mm}}^{i} & & \overbrace{\hspace{30mm}}^{j} & \\
       \big(0\dots0 & \tilde{p}(i)-\tau(\psi(e))\tilde{p}(j) & 0\dots0 & \tilde{p}(j)-\tau(\psi(e))^{-1}\tilde{p}(i) & 0\dots0\big )  \\
    \end{array}\]
    if $e$ is not a loop, and if $e$ is a loop at $i$ then the row corresponding to $e$ is given by
\[\begin{array}{ccccc}
       & \overbrace{\hspace{55mm}}^{i} & &  & \\
       \big (0\dots0 & 2\tilde{p}(i)-\tau(\psi(e))\tilde{p}(i)-\tau(\psi(e))^{-1}\tilde{p}(i)& 0\dots0 & 0 & 0\dots0\big)  \\
    \end{array}.\]
 Note that if all vertices are uncoloured, then (\ref{eq:orbitrigmat}) is the    standard spherical orbit rigidity matrix $O_{\mathbb{S}^d}(G_0,\psi,\tilde{p})$ for frameworks on $\mathbb{S}^{d}$ with a fixed radius discussed in \cite{BSWWSphere}.

We say that a $\Gamma$-symmetric framework $(G,\chi,p)$ on $r\mathbb{S}^d$ is called {\em $\Gamma$-symmetric independent} if $O_{\mathbb{S}^d}(G_0,\psi,\tilde{p})$ is row independent while it is called {\em $\Gamma$-symmetric isostatic} if it is $\Gamma$-symmetric infinitesimally rigid and independent.	
Also $(G,\chi,p)$ is called {\em $\Gamma$-generic} if the coordinates of $\tilde{p}$ and $r$ do not satisfy any nonzero polynomial except for those belonging to the ideal generated by the defining polynomials of concentric spheres $r\mathbb{S}^d$.
 Combinatorial characterisations of $\Gamma$-generic infinitesimally rigid frameworks on the fixed sphere $\mathbb{S}^{2}$ have been established for various groups in \cite{ns}, by extending the results for $\mathbb{R}^2$ \cite{jkt,mt,mt1}.

Finally we note that by extending the results in \cite{BS1} it is easy to see that for a  $\Gamma$-generic framework in $r\mathbb{S}^d$, a $\Gamma$-symmetric infinitesimal motion always extends to a continuous  motion which preserves the symmetry of the framework throughout the path. 

\subsection{Symmetry-adapted combinatorial counts}

For a subgroup $\mathcal{S}$ of the $d+1$-dimensional orthogonal group $O(\mathbb{R}^{d+1})$, a (column) vector $t\in \mathbb{R}^{d+1}$ is said to be {\em $\mathcal{S}$-invariant} if $St=t$ for every $S\in \mathcal{S}$. The dimension of the space of $\mathcal{S}$-invariant vectors is denoted by $t_{\mathcal{S}}$. 
 (Note that we may think of $t_{\mathcal{S}}$ as the dimension of the space of infinitesimal translations corresponding to the trivial irreducible representation of $\mathcal{S}$.) 


A straight-forward adaptation of the proof of Proposition~\ref{prop:trivial} yields the following result.

\begin{prop}\label{prop:symcount}
Let $\Gamma$ be a group, and let $(G,\chi,p)$ be a $\Gamma$-symmetric framework (with respect to $\theta:\Gamma\to \textrm{Aut}(G)$ and $\tau:\Gamma\to O(\mathbb{R}^{d+1})$) on $r\mathbb{S}^d$, where $(G,\chi)$ is a $k$-coloured graph, $\Gamma$ acts freely on the vertices of $G$, and $p(V)$ affinely spans $\mathbb{R}^{d+1}$.  Then we have
$${\rm rank }\ O((G_0,\chi_0),\psi,\tilde{p})\leq {\rm rank }\ O_{\mathbb{S}^d}(G_0,\psi,\tilde{\overline{p}})+\min\{k,|V_0|-t_{\tau(\Gamma)}\},
$$
 where $(G,\overline{p})$ is the framework on the (fixed) sphere $\mathbb{S}^d$ obtained from $(G,\chi,p)$ by central projection, and  $O_{\mathbb{S}^d}(G_0,\psi,\tilde{\overline{p}})$ is the standard spherical orbit rigidity matrix of  $(G,\overline{p})$.
\end{prop}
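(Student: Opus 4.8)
The plan is to carry the proof of Proposition~\ref{prop:trivial} over essentially verbatim, replacing the full rigidity matrices by the corresponding orbit matrices and working throughout with quotient data. The starting observation is the standard fact (see \cite{BSWW,BSWWSphere}) that $\ker O((G_0,\chi_0),\psi,\tilde{p})$ is precisely the space of $\Gamma$-symmetric infinitesimal motions $(\dot{p},\dot{r})$ of $(G,\chi,p)$, recorded in quotient coordinates $(\dot{\tilde{p}},\dot{r})$, and likewise $\ker O_{\mathbb{S}^d}(G_0,\psi,\tilde{\overline{p}})$ is the space of $\Gamma$-symmetric infinitesimal motions of the fixed-radius framework $(G,\overline{p})$ on $\mathbb{S}^d$.

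First I would prove the weaker estimate $\mathrm{rank}\ O((G_0,\chi_0),\psi,\tilde{p})\le\mathrm{rank}\ O_{\mathbb{S}^d}(G_0,\psi,\tilde{\overline{p}})+k$, which already disposes of the case $k\le|V_0|-t_{\tau(\Gamma)}$. As in the proof of Proposition~\ref{prop:trivial}, the rescaling $\dot{\tilde{q}}(i)=\dot{\tilde{p}}(i)/\|\tilde{p}(i)\|$ gives a linear bijection between $\ker\begin{pmatrix}O(G_0,\psi,\tilde{p})\\S(G_0,\tilde{p})\end{pmatrix}$ and $\ker O_{\mathbb{S}^d}(G_0,\psi,\tilde{\overline{p}})$ (here one uses that $\tau(\Gamma)\subseteq O(\mathbb{R}^{d+1})$ acts by isometries, so each vertex orbit has constant norm and the rescaling is compatible with the symmetry constraint $\tau(\gamma)\dot{p}_i=\dot{p}_{\gamma i}$); since both matrices have $(d+1)|V_0|$ columns, this yields $\mathrm{rank}\begin{pmatrix}O(G_0,\psi,\tilde{p})\\S(G_0,\tilde{p})\end{pmatrix}=\mathrm{rank}\ O_{\mathbb{S}^d}(G_0,\psi,\tilde{\overline{p}})$. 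For each $\dot{\tilde{p}}$ in this kernel, $(\dot{\tilde{p}},\dot{r}=0)$ lies in $\ker O((G_0,\chi_0),\psi,\tilde{p})$, so comparing with the column count $(d+1)|V_0|+k$ of the latter matrix gives the estimate.

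Second, for the range $k\ge|V_0|-t_{\tau(\Gamma)}$ I would produce additional $\Gamma$-symmetric infinitesimal motions from $\tau(\Gamma)$-invariant translations, adapting the construction behind (\ref{eq:prop1-1})(\ref{eq:prop1-2}). Fix a representative orbit $\Gamma v_c$ for each colour $c$. For a $\tau(\Gamma)$-invariant vector $x$, set $\dot{p}(v)=x$ for all $v\in V$ and $\dot{r}(c)=p(v_c)\cdot x$ for each $c$; this assignment is $\Gamma$-symmetric, the edge equations hold automatically, and, using $\tau(\gamma)^{-1}x=x$ together with $p(\gamma u)=\tau(\gamma)p(u)$, the sphere equations collapse to a single linear equation for each vertex orbit that is uncoloured or non-representative for its colour -- a system of $|V_0|-k$ linear equations on the $t_{\tau(\Gamma)}$-dimensional space of $\tau(\Gamma)$-invariant vectors. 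Hence there are at least $t_{\tau(\Gamma)}-(|V_0|-k)$ independent such motions. As in Proposition~\ref{prop:trivial}, these translation-type motions meet the image of $\ker\begin{pmatrix}O(G_0,\psi,\tilde{p})\\S(G_0,\tilde{p})\end{pmatrix}$ only in $0$: any vector in that image satisfies the $S$-block equation $p(i)\cdot\dot{p}(i)=0$ at every vertex, and since $p(V)$ affinely spans $\mathbb{R}^{d+1}$ a nonzero constant velocity field cannot satisfy this. Summing the two independent dimensions and subtracting from $(d+1)|V_0|+k$ gives $\mathrm{rank}\ O((G_0,\chi_0),\psi,\tilde{p})\le\mathrm{rank}\ O_{\mathbb{S}^d}(G_0,\psi,\tilde{\overline{p}})+|V_0|-t_{\tau(\Gamma)}$, and combining with the first estimate produces the claimed minimum.

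The only step that is not a mechanical transcription is the bookkeeping that produces $t_{\tau(\Gamma)}$ and $|V_0|-k$ in the right places: one must verify that $\Gamma$-symmetry of $\dot{p}$ forces the translation vector $x$ to be $\tau(\Gamma)$-invariant -- this is exactly why $d+1$ is replaced by $t_{\tau(\Gamma)}$ -- and that after using invariance the sphere constraints reduce to precisely one equation per uncoloured or non-representative vertex orbit, hence $|V_0|-k$ equations in total. Granting these two counts, everything else follows line by line from the proof of Proposition~\ref{prop:trivial}.
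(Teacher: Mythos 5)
Your proposal is correct and is precisely the ``straight-forward adaptation of the proof of Proposition~\ref{prop:trivial}'' that the paper invokes without writing out: the rescaling bijection between the two kernels for the weak bound $\mathrm{rank}\,O \le \mathrm{rank}\,O_{\mathbb{S}^d}+k$, and then the translation-type motions with $x$ forced to be $\tau(\Gamma)$-invariant (replacing $d+1$ by $t_{\tau(\Gamma)}$) subject to one linear condition per uncoloured or non-representative vertex orbit (replacing $n-k$ by $|V_0|-k$), with the two subspaces of motions meeting only in zero. The two counting points you flag as the only non-mechanical steps are exactly the right ones, and your verifications of them are sound.
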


For a positive integer $d$ and a subgroup  $\mathcal{S}$ of the $d+1$-dimensional orthogonal group $O(\mathbb{R}^{d+1})$,
let $r_{d,\mathcal{S}}$ denote the rank function of the $d$-dimensional $\Gamma$-generic spherical rigidity matroid, which is defined in terms of the rank of the $d$-dimensional spherical $\Gamma$-symmetric orbit rigidity matrix, where the  radius of the sphere is assumed to be fixed.
By Proposition~\ref{prop:symcount}, we then  have the following count condition for a $\Gamma$-generic framework in $r\mathbb{S}^d$ to be $\Gamma$-symmetric independent.

\begin{prop}
Let $(G,\chi, p)$ be a $\Gamma$-generic framework (with respect to $\theta:\Gamma\to \textrm{Aut}(G)$ and $\tau:\Gamma\to O(\mathbb{R}^{d+1})$) on $r\mathbb{S}^d$, where $(G,\chi)$ is a $k$-coloured graph. Further, let
$((G_0,\chi_0), \psi)$ be the quotient $\Gamma$-gain graph of $(G,\chi)$.
If $(G,\chi, p)$ is $\Gamma$-symmetric independent, then for every $F_0\subseteq E_0$ with $|V(F_0)|\geq d$, and $v\in V(F_0)$, 
we have 
$$|F_0|\leq r_{d, \tau(\Gamma)}(F_0)+\min\{k(F_0), |V(F_0)|-t_{\tau(\langle F_0\rangle_v)}\}.$$
 \end{prop}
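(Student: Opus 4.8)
The plan is to adapt the proof of Lemma~\ref{lem:necessity} to the orbit setting, with Proposition~\ref{prop:symcount} playing the role that Proposition~\ref{prop:trivial} plays there. First I would record that $\Gamma$-symmetric independence of $(G,\chi,p)$ means that the rows of the orbit rigidity matrix $O((G_0,\chi_0),\psi,\tilde{p})$ of (\ref{eq:orbitrigmat}) are linearly independent. Fix $F_0\subseteq E_0$ and $v\in V(F_0)$, and write $W=V(F_0)$. Consider the submatrix $O'$ of $O((G_0,\chi_0),\psi,\tilde{p})$ obtained by keeping the edge rows indexed by $F_0$, the vertex-constraint rows indexed by $W$, the $(d+1)$-column blocks indexed by $W$, and the $k(F_0)$ colour columns of the colours occurring on $W$. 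Each retained row has all of its nonzero entries in retained columns: an edge row of $F_0$ is supported on the two coordinate blocks of its endpoints (both in $W$), and the vertex row of $w\in W$ is supported on the block of $w$ together with the column of $\chi_0(w)$, which is one of the $k(F_0)$ colours on $W$. Hence $O'$ is literally a submatrix of $O((G_0,\chi_0),\psi,\tilde{p})$ whose rows are a subset of an independent row set, so ${\rm rank}\ O' = |F_0| + |W|$.

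Next I would identify $O'$ with the orbit rigidity matrix of a smaller symmetric framework, namely $O((G_0[F_0],\chi_0|_W,\psi|_{F_0}),\tilde{p}|_W)$, where the gain subgraph $(G_0[F_0],\psi|_{F_0})$ is regarded as the quotient $\langle F_0\rangle_v$-gain graph of the associated covering framework. To make this legitimate one switches $\psi$ along a spanning forest of $F_0$ (an operation that only relabels $\tilde{p}$ and hence leaves ${\rm rank}\ O'$ unchanged), uses that $\langle F_0\rangle_v$ is well defined up to conjugacy on each component of $F_0$, and reduces the disconnected case to the connected one componentwise. Provided the lifted subframework affinely spans $\mathbb{R}^{d+1}$ — which is where the hypothesis on $|V(F_0)|$ together with $\Gamma$-genericity is used — Proposition~\ref{prop:symcount} applied to this subframework gives
\[
{\rm rank}\ O' \;\le\; {\rm rank}\ O_{\mathbb{S}^d}(G_0[F_0],\psi|_{F_0},\tilde{\overline{p}}|_W) \;+\; \min\{k(F_0),\, |W|-t_{\tau(\langle F_0\rangle_v)}\}.
\]

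Finally, I would compute the spherical orbit rank appearing on the right. Using the scaling bijection between ${\rm ker}\ O_{\mathbb{S}^d}$ and the kernel of the $d$-dimensional $\tau(\Gamma)$-symmetric Euclidean orbit matrix of the centrally projected framework (the orbit analogue of the identity ${\rm dim}\ {\rm ker}\ R_{\mathbb{S}^d}(G,p)={\rm dim}\ {\rm ker}\ R(G,p')$ used in Proposition~\ref{prop:trivial}), and noting that the $S$-block has full row rank $|W|$ generically, one gets ${\rm rank}\ O_{\mathbb{S}^d}(G_0[F_0],\psi|_{F_0},\tilde{\overline{p}}|_W) = |W| + r_{d,\tau(\Gamma)}(F_0)$. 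Substituting this and ${\rm rank}\ O' = |F_0|+|W|$ into the displayed inequality and cancelling $|W|$ yields $|F_0|\le r_{d,\tau(\Gamma)}(F_0)+\min\{k(F_0),|V(F_0)|-t_{\tau(\langle F_0\rangle_v)}\}$, as required.

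The step I expect to be the main obstacle is the bookkeeping in the middle paragraph: verifying that the row/column restriction $O'$ is genuinely the orbit rigidity matrix of a $\langle F_0\rangle_v$-symmetric framework (the switching argument, the conjugacy ambiguity of $\langle F_0\rangle_v$, which is harmless since $t_{\tau(\cdot)}$ is conjugation-invariant, and the componentwise reduction for disconnected $F_0$), and checking that the affine-spanning hypothesis of Proposition~\ref{prop:symcount} is actually met on the subframework so that the proposition may legitimately be invoked. By contrast, the first paragraph (restriction preserving row independence) and the last (the fixed-radius spherical/Euclidean rank identity) are routine.
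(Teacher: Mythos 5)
Your route is the paper's route: the proposition is stated there as a direct consequence of Proposition~\ref{prop:symcount} (no written proof is given), and your plan --- restrict the orbit rigidity matrix to the rows of $F_0$ and $V(F_0)$, recognise the restriction as the orbit matrix of a subframework, switch the gains into $\langle F_0\rangle_v$, and invoke Proposition~\ref{prop:symcount} for that subgroup --- is exactly the intended derivation, and your first and last paragraphs are fine. However, the two places you yourself flag as "the main obstacle" are genuine gaps, not just bookkeeping. First, the disconnected case does not reduce "componentwise". The subgroup $\langle F_0\rangle_v$ is generated only by closed walks in the component of $v$, so the gains on the other components need not lie in it even after switching, and the restricted matrix is then not the orbit matrix of any $\langle F_0\rangle_v$-symmetric framework. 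Summing the connected-case bound over components also fails: if two components share a colour $c$, each contributes $\min\{k(F_j),\cdot\}$ counting $c$ once per component, and the sum can exceed $k(F_0)$, so you only recover $|F_0|\le r_{d,\tau(\Gamma)}(F_0)+\sum_j k(F_j)$ rather than the claimed bound with $k(F_0)$. A global argument that couples the dilation variables of colour-sharing components is needed here and is missing.

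Second, the affine-spanning hypothesis of Proposition~\ref{prop:symcount} is not delivered by "$|V(F_0)|\ge d$ together with $\Gamma$-genericity". If $\langle F_0\rangle_v$ is trivial (balanced $F_0$), the relevant lifted component has exactly $|V(F_0)|$ points, and $|V(F_0)|\le d+1$ generic points affinely span at most a $d$-dimensional flat, never $\mathbb{R}^{d+1}$. The part of Proposition~\ref{prop:symcount} you need --- the $|V_0|-t_{\tau(\Gamma)}$ branch of the minimum --- is precisely the part whose proof (via Proposition~\ref{prop:trivial}) uses the spanning hypothesis to separate the translational motions from the spherical ones, so it cannot be invoked in this regime. (Indeed, pushing on this boundary shows the proposition's threshold is suspect as printed: for $d=2$ a single balanced non-loop edge has $|V(F_0)|=2\ge d$, $t_{\tau(\langle F_0\rangle_v)}=3$, and if both endpoints are coloured the claimed bound reads $1\le r_{2,\tau(\Gamma)}(F_0)-1=0$. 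So either the hypothesis should be strengthened, e.g.\ to $|V(F_0)|\ge t_{\tau(\langle F_0\rangle_v)}$, or those subsets must be excluded; your proof should either handle or explicitly restrict to the subsets for which the lifted component does span.) Neither gap is fatal to the overall strategy, but as written the argument only establishes the inequality for connected $F_0$ whose lifted component affinely spans $\mathbb{R}^{d+1}$.
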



This leads to the following necessary conditions for $\Gamma$-symmetric frameworks to be $\Gamma$-symmetric isostatic 
in $r\mathbb{S}^1$. Analogously as before,  $\omega(F_0)$ denotes the number of connected components in $(V_0,F_0)$.

\begin{corollary}\label{cor:symcounts} Let $(G,\chi, p)$ be a $\Gamma$-generic framework (with respect to $\theta:\Gamma\to \textrm{Aut}(G)$ and $\tau:\Gamma\to O(\mathbb{R}^{2})$), where $(G,\chi)$ is a $k$-coloured graph.  If $(G,\chi, p)$ is $\Gamma$-symmetric isostatic, then the following hold.
\begin{itemize}
\item[(a)] If  $\Gamma=\mathbb{Z}_n$ and $\tau(\Gamma)$ describes $n$-fold rotational symmetry in the plane, then the quotient $\Gamma$-gain graph $((G_0,\chi_0), \psi)$ satisfies
\begin{itemize}
\item[(i)] $|E_0|=|V_0|-1+k$,
\item[(ii)] $|F_0|\leq \begin{cases} |V_0|-\omega(F_0)+\min\{k(F_0), |V(F_0)|-2\}, & \text{(if $F_0\subseteq E_0$ is balanced)}, \\
|V_0|-\omega(F_0)+k(F_0), & \text{(if $F_0\subseteq E_0$ is unbalanced)},
\end{cases}
$
\end{itemize}
\item[(b)] If $\Gamma=\mathbb{Z}_2$ and $\tau(\Gamma)$ describes reflectional symmetry in the plane, then the quotient $\Gamma$-gain graph $((G_0,\chi_0), \psi)$ satisfies
\begin{itemize}
\item[(i)] $|E_0|=|V_0|+\min\{k, |V_0|-1\}$,
\item[(ii)] $|F_0|\leq \begin{cases} |V_0|-\omega(F_0)+\min\{k(F_0), |V(F_0)|-2\}, & \text{(if $F_0\subseteq E_0$ is balanced)}, \\
|V(F_0)|+\min\{k(F_0), |V(F_0)|-1\} & \text{(if $F_0\subseteq E_0$ is unbalanced)}.
\end{cases}
$
\end{itemize}
\item[(c)] If $\Gamma$ is a dihedral group and $\tau(\Gamma)$ describes dihedral symmetry in the plane, then the quotient $\Gamma$-gain graph $((G_0,\chi_0), \psi)$ satisfies
\begin{itemize}
\item[(i)] $|E_0|=|V_0|+k$,
\item[(ii)] $|F_0|\leq \begin{cases} |V_0|-\omega(F_0)+\min\{k(F_0), |V(F_0)|-2\}, & \text{(if $F_0\subseteq E_0$ is balanced)}, \\
|V_0|-\omega(F_0)+k(F_0), & \text{(if $\tau(\langle F_0 \rangle_v)$ is a rotation group)},\\
|V(F_0)|+\min\{k(F_0), |V(F_0)|-1\}, & \text{(if $\tau(\langle F_0 \rangle_v)$ is a reflection group)},\\
|V(F_0)|+k(F_0), & \text{(if $\tau(\langle F_0 \rangle_v)$ is dihedral)}.
\end{cases}
$
\end{itemize}

\end{itemize}



\end{corollary}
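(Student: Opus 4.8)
The plan is to derive Corollary~\ref{cor:symcounts} as a direct specialisation of the preceding proposition, so the work is to evaluate the two group-theoretic quantities $r_{1,\tau(\Gamma)}(F_0)$ and $t_{\tau(\langle F_0\rangle_v)}$ in each of the listed cases. First I would recall that the $1$-dimensional $\Gamma$-generic spherical rigidity matroid, via the equivalence between spherical and Euclidean generic rigidity noted in Section~\ref{sec:variable}, coincides (at the level of rank) with the $\Gamma$-generic rigidity matroid in $\mathbb{R}^1$; for a gain graph on $V_0$ this rank is well understood to be $|V_0|-\omega(F_0)$ when $F_0$ is balanced and $|V_0|-\omega_{\mathrm{bal}}(F_0)$ (one more than the balanced count per component, because an unbalanced cycle becomes independent) when $F_0$ is unbalanced. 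I would state this as the known $(2,3,\ldots)$-type sparsity description of the cyclic/dihedral $1$-dimensional orbit matroid, citing \cite{jkt,ns}. The first step is therefore to substitute these values of $r_{1,\tau(\Gamma)}$ into the inequality $|F_0|\le r_{1,\tau(\Gamma)}(F_0)+\min\{k(F_0),|V(F_0)|-t_{\tau(\langle F_0\rangle_v)}\}$ from the previous proposition.

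Next I would compute $t_{\mathcal{S}}$, the dimension of the space of $\mathcal{S}$-invariant translation vectors in $\mathbb{R}^{d+1}=\mathbb{R}^2$, for each relevant subgroup $\mathcal{S}\le O(\mathbb{R}^2)$: if $\mathcal{S}$ is trivial then $t_{\mathcal{S}}=2$; if $\mathcal{S}$ is a nontrivial rotation group then $t_{\mathcal{S}}=0$; if $\mathcal{S}$ is generated by a single reflection then $t_{\mathcal{S}}=1$ (the axis direction); and if $\mathcal{S}$ is dihedral of order $\ge 4$ then $t_{\mathcal{S}}=0$. For part (a), $\langle F_0\rangle_v$ is trivial exactly when $F_0$ is balanced and is a nontrivial subgroup of $\mathbb{Z}_n$ (hence a rotation group) otherwise; plugging $t=2$ resp.\ $t=0$ gives the two cases in (a)(ii), and the global count (a)(i) is obtained by taking $F_0=E_0$, where $\langle E_0\rangle_v=\Gamma$ acts as rotations so $t_{\tau(\Gamma)}=0$ and hence $|E_0|=|V_0|-1+\min\{k,|V_0|\}=|V_0|-1+k$ (using $k\le |V_0|$ since $\Gamma$ acts freely and each orbit has one colour). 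Parts (b) and (c) are the same bookkeeping: in (b), $\langle F_0\rangle_v$ is trivial if balanced and a reflection group ($t=1$) if unbalanced, so the unbalanced bound uses $r_{1}=|V(F_0)|$ (an unbalanced $\mathbb{Z}_2$-cycle is independent in $1$D, raising the rank by one so that it equals $|V(F_0)|$) and $\min\{k(F_0),|V(F_0)|-1\}$; in (c) one splits the unbalanced case according to whether $\tau(\langle F_0\rangle_v)$ is a pure rotation subgroup ($t=0$), a single-reflection subgroup ($t=1$), or a full dihedral subgroup ($t=0$), giving the four-way split. For (c)(i), take $F_0=E_0$ with $\langle E_0\rangle_v$ the whole dihedral group, so $t=0$ and $|E_0|=|V_0|+k$.

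I expect the main obstacle to be purely expository rather than mathematical: carefully pinning down the rank function $r_{1,\tau(\Gamma)}$ of the one-dimensional orbit rigidity matroid for each group, and in particular verifying that an unbalanced cycle contributes exactly one extra unit of rank in the $1$-dimensional setting (so that $r_1$ of an unbalanced connected graph on $V(F_0)$ vertices equals $|V(F_0)|$, not $|V(F_0)|-1$). This is where the reflection/rotation distinction for $\mathbb{Z}_2$ versus $\mathbb{Z}_n$ $(n\ge3)$ matters and where one must be slightly careful, because for a rotation the orbit matrix already has full translational kernel deficiency while for a reflection one translational direction survives. The cleanest route is to note that $r_{1,\mathcal{S}}(F_0)=(d+1)|V(F_0)|-\dim(\text{trivial motions of }F_0)-\dim(\ker\text{ beyond trivial})$ and to observe that for a connected $F_0$ the trivial $\Gamma$-symmetric motions on the line are precisely the $\tau(\langle F_0\rangle_v)$-invariant translations together with (for the fixed-radius sphere) the infinitesimal rotations, whose symmetric part vanishes for all the groups here; this reconciles the stated $1$-dimensional bounds with Proposition~\ref{prop:symcount}. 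Once that identification is in hand, each of (a), (b), (c) follows by substitution with no further computation, and the global equalities (i) follow by evaluating at $F_0=E_0$ together with the elementary observation that $k\le|V_0|$.
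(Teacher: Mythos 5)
Your overall strategy is the right one and is exactly what the paper intends: the corollary is a direct specialisation of the preceding proposition, obtained by evaluating $r_{1,\tau(\Gamma)}(F_0)$ and $t_{\tau(\langle F_0\rangle_v)}$ for each planar point group, with the global counts (i) coming from $F_0=E_0$. Your computation of $t_{\mathcal S}$ (namely $2$, $0$, $1$, $0$ for the trivial, rotational, reflectional and dihedral subgroups) is correct, and parts (b) and the reflection/dihedral subcases of (c) come out right.

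However, there is a genuine error in your identification of $r_{1,\tau(\Gamma)}$ for rotational gain groups, and it is located precisely at the point you flag as delicate. You assert that an unbalanced connected $F_0$ always has rank $|V(F_0)|$ because ``the infinitesimal rotations, whose symmetric part vanishes for all the groups here.'' That last clause is false for the cyclic groups: the infinitesimal rotation of $\mathbb{S}^1$ about the origin \emph{commutes} with every rotation of the plane, so it survives as a $\Gamma$-symmetric trivial motion of any connected subframework whose induced gain group $\langle F_0\rangle_v$ maps to a rotation group (it is killed only by reflections, which anticommute with the skew-symmetric generator). Consequently, in the one-dimensional spherical orbit matroid an unbalanced connected set with rotational gain group still has rank at most $|V(F_0)|-1$ --- unbalanced cycles contribute nothing extra --- so $r_{1,\tau(\Gamma)}(F_0)=|V_0|-\omega(F_0)$ for \emph{all} $F_0$ in case (a) and in the rotational subcase of (c). With your frame-matroid rank ($|V(F_0)|$ for unbalanced connected sets) the substitution yields $|F_0|\le |V(F_0)|+k(F_0)$, which is one unit weaker than the stated bounds $|V_0|-\omega(F_0)+k(F_0)$ in (a)(ii) and in the second line of (c)(ii); so as written your argument does not establish those cases. (This is also why the paper expresses exactly those bounds in terms of $|V_0|$ and $\omega(F_0)$ rather than $|V(F_0)|$, as remarked at the end of Section 5.) The fix is simply to separate the two sources of rank loss: the term $t_{\tau(\langle F_0\rangle_v)}$ already accounts for invariant translations, while the surviving rotation must be subtracted from $r_{1,\tau(\Gamma)}$ componentwise whenever $\tau(\langle F_0\rangle_v)$ is trivial or a rotation group, and only then does each case of the corollary drop out by substitution.
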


Consider for example the frameworks in Figure~\ref{fig:2D}. The underlying $3$-coloured graph $(G,\chi)$ satisfies $|V|=12$ and
 $|E|=16$. Thus, $|E|=(|V|-1+3) + 2$, and hence  $(G,\chi)$ is  overbraced by $2$. In fact, by Theorem~\ref{thm:1d}, generic realizations of $(G,\chi)$ are rigid. 
If $(G,\chi)$ is realized with half-turn symmetry (see Figure~\ref{fig:2D}(b)) we obtain the counts
$|V_0|=6$, and $|E_0|=8$ for the quotient $\mathbb{Z}_2$-gain graph. Thus,  $|E_0|=|V_0|+ 2$, i.e., the framework counts to be $\mathbb{Z}_2$-symmetric isostatic.
Finally, if $(G,\chi)$ is realized generically with  4-fold rotational symmetry (see Figure~\ref{fig:2D}(c)), then we obtain the counts  $|V_0|=3$ and $|E_0|=4$, and hence $|E_0|=|V_0|+ 1$. Therefore, by Corollary~\ref{cor:symcounts},  the framework is guaranteed to have a non-trivial $\mathbb{Z}_4$-symmetric infinitesimal motion, which also extends to a symmetry-preserving continuous motion.

For all the rotational groups, the reflection group, and the dihedral groups of order $2n$, where $n$ is odd, we conjecture that the counts in Corollary~\ref{cor:symcounts} are also sufficient for $\Gamma$-generic frameworks in $r\mathbb{S}^1$ to be $\Gamma$-symmetric isostatic. For the dihedral groups of order  $2n$, where $n$ is even, however, there exist simple counterexamples. Bottema's mechanism \cite{bottema}, for example, has dihedral symmetry of order 4 and the corresponding quotient gain graph satisfies the counts in Corollary~\ref{cor:symcounts} for $k=2$ colours (see also \cite{jkt,BSWW}).

Finally, we also provide a sample result for frameworks in $r\mathbb{S}^2$. The corresponding results for other symmetry groups are obtained analogously.

\begin{corollary}\label{cor:2d}  Let $(G,\chi, p)$ be a $\mathbb{Z}_q$-generic framework (with respect to $\theta:\mathbb{Z}_q\to \textrm{Aut}(G)$ and $\tau:\mathbb{Z}_q\to O(\mathbb{R}^{3})$), where $\tau(\mathbb{Z}_q)$ describes rotational symmetry in 3-space, and   $(G,\chi)$ is a $k$-coloured graph. If $(G,\chi, p)$ is $\mathbb{Z}_q$-symmetric isostatic, then the  quotient $\mathbb{Z}_q$-gain graph $((G_0,\chi_0), \psi)$ satisfies
\begin{itemize}
\item[(i)] $|E_0|=2|V_0|-1+\min\{k, |V_0|-1\}$,
\item[(ii)]  
$|F_0|\leq 
\begin{cases} r_{2,\tau(\mathbb{Z}_q)}(F_0)+\min\{k(F_0), |V(F_0)|-3\}, & \text{(if $F_0\subseteq E_0$ is balanced)}, \\
r_{2,\tau(\mathbb{Z}_q)}(F_0)+\min\{k(F_0), |V(F_0)|-1\}, & \text{(if $F_0\subseteq E_0$ is unbalanced)}.
\end{cases}
$
\end{itemize}
\end{corollary}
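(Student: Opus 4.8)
The plan is to derive both necessary conditions directly from Proposition~\ref{prop:symcount}, mimicking the passage from Proposition~\ref{prop:trivial} to Lemma~\ref{lem:necessity} but now keeping track of the gain-group structure. First I would record that for $d=2$ and $\Gamma=\mathbb{Z}_q$ acting by rotations about a fixed axis in $\mathbb{R}^3$, the dimension of $\tau(\Gamma)$-invariant translations is $t_{\tau(\mathbb{Z}_q)}=1$ (the axis direction) when $q\geq 2$, so that $\min\{k,|V_0|-t_{\tau(\Gamma)}\}=\min\{k,|V_0|-1\}$; this immediately converts the rank bound of Proposition~\ref{prop:symcount} into the count $|E_0|\le \operatorname{rank} O_{\mathbb{S}^2}(G_0,\psi,\tilde{\overline{p}})+\min\{k,|V_0|-1\}$, and full rank of the spherical orbit matrix for $\mathbb{Z}_q$-generic $(G,\overline p)$ is $2|V_0|-1$ (this is the known maximum rank of the spherical/planar $\mathbb{Z}_q$-symmetric orbit rigidity matrix). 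Combining gives exactly (i), since $\Gamma$-symmetric isostaticity forces equality.

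For (ii) I would apply the same argument to each induced subframework. Given $F_0\subseteq E_0$, let $(G_0[F_0],\psi|_{F_0})$ be the subgraph it spans, with corresponding symmetric subframework; $\Gamma$-symmetric independence of $(G,\chi,p)$ forces row independence of the corresponding rows of the orbit rigidity matrix, hence $|F_0|\le \operatorname{rank} O_{\mathbb{S}^2}(\,\cdot\,)$ for that sub-block. Here the relevant invariant-translation count is governed not by $\Gamma$ itself but by the subgroup $\langle F_0\rangle_v$ induced by $F_0$: if $F_0$ is balanced then (after a suitable switching/gauge transformation) all gains are trivial, the subframework behaves like an unsymmetric framework on $\mathbb{S}^2$, and Proposition~\ref{prop:symcount} with trivial group yields the bound $|F_0|\le r_{2,\tau(\mathbb{Z}_q)}(F_0)+\min\{k(F_0),|V(F_0)|-3\}$ (the $-3$ coming from $t_{\{\mathrm{id}\}}=3$ and the ${d+1\choose 2}=3$ rigid-motion count on $\mathbb{S}^2$, which here is folded into $r_{2,\tau(\mathbb{Z}_q)}$). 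If $F_0$ is unbalanced then $\langle F_0\rangle_v$ is a nontrivial rotation group, $t_{\tau(\langle F_0\rangle_v)}=1$, and the same reasoning gives $|F_0|\le r_{2,\tau(\mathbb{Z}_q)}(F_0)+\min\{k(F_0),|V(F_0)|-1\}$. I would phrase both cases uniformly by noting that $\operatorname{rank}O_{\mathbb{S}^2}$ restricted to $F_0$ equals $r_{2,\tau(\mathbb{Z}_q)}(F_0)$ by the definition of the latter as the rank function of the $d$-dimensional spherical $\Gamma$-generic rigidity matroid.

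The main obstacle is the balanced case: one must justify that a balanced gain subgraph, after gauge transformation, genuinely reduces to the unsymmetric spherical setting so that the term $|V(F_0)|-t_{\{\mathrm{id}\}}=|V(F_0)|-3$ is the correct one rather than $|V(F_0)|-1$. This requires invoking that gain switching (conjugating $\psi$ by a vertex function into $\Gamma$) does not change the orbit rigidity matrix up to row/column scaling — a standard fact for gain-graph rigidity — together with the observation that when all gains are trivial the orbit rigidity matrix rows in Proposition~\ref{prop:symcount}'s proof degenerate to copies of the ordinary spherical rigidity matrix, for which the trivial-motion space has dimension ${d+1\choose 2}+ (d+1-(\,|V(F_0)|-k(F_0)\,))^+$ with $t_{\{\mathrm{id}\}}=d+1=3$. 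Everything else is a routine transcription of the proof of Proposition~\ref{prop:trivial} to the quotient gain graph, together with the already-cited maximum-rank results for the symmetric spherical orbit rigidity matrix; I would state these as the inputs and keep the verification of the two arithmetic identities brief.
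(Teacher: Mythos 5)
Your proposal is correct and follows essentially the same route as the paper, which obtains this corollary by specialising Proposition~\ref{prop:symcount} (and its subframework version) with $t_{\tau(\mathbb{Z}_q)}=1$ for the rotation group and $t_{\tau(\langle F_0\rangle_v)}=3$ or $1$ according to whether $F_0$ is balanced or unbalanced. The only loose point is bookkeeping: the spherical orbit rigidity matrix $O_{\mathbb{S}^2}(G_0,\psi,\tilde{\overline{p}})$ as defined here includes the $|V_0|$ sphere rows and has maximum rank $3|V_0|-1$, so the edge-row contribution is the $2|V_0|-1$ you quote; this does not affect the arithmetic of (i) or (ii).
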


By \cite{ns},  $|F_0|\leq r_{2,\tau(\mathbb{Z}_q)}(F_0)$ for every $F_0\subseteq E_0$ if and only if 
$$|F_0|\leq \begin{cases}
2|V(F_0)|-3 & \text{(if $F_0$ is balanced)}, \\
2|V(F_0)|-1 & \text{(if $F_0$ is unbalanced)}.
\end{cases}
$$
Hence it is natural to ask whether the subgraph counts in Corollary~\ref{cor:2d}(ii) can also be written as
$$|F_0|\leq \begin{cases} 2|V(F_0)|-3+\min\{k(F_0), |V(F_0)|-3\}, & \text{(if $F_0$ is balanced)}, \\
2|V(F_0)|-1+\min\{k(F_0), |V(F_0)|-1\}, & \text{(if $F_0$ is unbalanced).}
\end{cases}
$$
This also gives a necessary count but it is weaker. Consider for example a $1$-coloured $\mathbb{Z}_q$-quotient gain graph 
 consisting of two disjoint triangles  together with one loop at each vertex: $V_0=\{1,2,3,4,5,6\}$ and $E_0=\{11, 22, 33, 44, 55, 66, 12,23,31, 45, 56, 64\}$, where all loops have a non-trivial gain and all non-loops have trivial gain. This gain graph satisfies the weaker counts, but violates the counts in Corollary~\ref{cor:2d}.

For the same reason, the subgraph counts for balanced subgraphs or unbalanced subgraphs that induce a rotational symmetry group in Corollary~\ref{cor:symcounts} are expressed in terms of $|V_0|$ and $\omega(F_0)$ instead of $|V(F_0)|$.

We conjecture that the counts in Corollary~\ref{cor:2d} are also sufficient for  $\mathbb{Z}_q$-generic frameworks in $r\mathbb{S}^2$ to be $\mathbb{Z}_q$-symmetric isostatic for the uniformly coloured case, where $\mathbb{Z}_q$ describes rotational symmetry in 3-space.

\section{Concluding remarks} \label{sec:concl}

1) Our main results are robust under mild generalisations. For example, with our proof technique, it does not matter if our initial coloured graph $(G,\chi)$ is realised with all vertices of colour $c$ on $r(c)\mathbb{S}^d$ or on concentric $d$-spheres nor does it matter if all vertices start on the unit $d$-sphere. The key is that vertices with the same colour change radius at the same rate.
While these generalisations are equivalent generically, there will be subtle points about how they connect geometrically.  For example, if a  framework on concentric circles in the plane contains a triangle, then special choices of the radii for these three vertices will make the triangle collinear, hence losing its infinitesimal rigidity.
\medskip



\medskip \noindent
2) It is possible to replace the $d$-sphere with any algebraic set of dimension $d$ and repeat our analysis. As an example, an analogue of Theorem \ref{thm:2d} for concentric cylinders might be useful for understanding certain bacteriophages \cite{Pir}. The key change is to the submatrix $S(G,p)$ and hence to the dimension of the space of trivial motions.
The uncoloured case, for surfaces, has already been considered \cite{nop,nop2}. However it is easy to extend Figure \ref{fig:bananas}(b) to give counterexamples for any set of concentric surfaces with 3 independently variable radii. 

\medskip \noindent
3) A spherical framework on $r\mathbb{S}^d$ may be modeled as a `coned' framework in Euclidean $(d+1)$-space by adding a new cone vertex $v_0$ at the center of the sphere and by adding a new cone edge from $v_0$ to each of the original vertices. The sets of cone edges corresponding to the spheres in $r\mathbb{S}^d$ may change length at the same rate independently of each other. More generally, one could study the generic rigidity of arbitrary (i.e., not necessarily coned) Euclidean frameworks for which arbitrary subsets of edges are allowed to change length in a coordinated fashion independently of each other. This question is currently under investigation and some initial results have recently been presented at the workshop on `Geometric Rigidity Theory and Applications' at the ICMS in Edinburgh.

\end{document}